\documentclass[10pt,reqno]{amsart}

% Pakages------------------------------------------
\usepackage{amsthm, mathrsfs, amsmath, amstext, amsxtra, amsfonts, dsfont, amssymb, bm}
\usepackage{xcolor}
\usepackage{lmodern}
\usepackage{relsize}
\usepackage[T1]{fontenc}
\usepackage[colorlinks, linkcolor=red, citecolor=blue, urlcolor=blue, %backref=page,
hypertexnames=false]{hyperref}
\usepackage{tikz,tkz-tab}
%\usepackage[notcite,notref]{showkeys}

% Count lines -----------------------------------------
%\usepackage[pagewise]{lineno}\linenumbers

% Page formats--------------------------------------
\usepackage{geometry}
\geometry{
	a4paper,% redundant if already in \documentclass
	left=25mm,
	right=25mm,
	top=25mm,
	bottom=25mm,
	heightrounded,% better use it
}

% Notations-----------------------------------

\newcommand{\R}{\mathbb R}

\newcommand{\Ac}{\mathcal A}

\newcommand{\vareps}{\varepsilon}

\DeclareMathOperator*{\ST}{ST}

\DeclareMathOperator*{\loc}{loc}
\DeclareMathOperator*{\rad}{rad}

\newcommand{\scal}[1]{\left\langle #1 \right\rangle} %scalar product or japanese bracket
 %interior notation

 %end proof notation

% Names------------------------------------
%\newcommand{\Majdoub}{\textcolor{blue}{MAJDOUB}}
%\newcommand{\Keraani}{\textcolor{blue}{KERAANI}}
%\newcommand{\Iheb}{\textcolor{blue}{IHEB}}
%\newcommand{\Duong}{\textcolor{blue}{DUONG}}

% Definitions -----------------------------------------
\newtheorem{theorem}{Theorem}[section]
\newtheorem{lemma}[theorem]{Lemma} 
\newtheorem{proposition}[theorem]{Proposition}
\newtheorem{corollary}[theorem]{Corollary} 
\theoremstyle{definition}
\newtheorem{definition}[theorem]{Definition}
\newtheorem{remark}[theorem]{Remark}

% Title------------------------------------------------------
\title[Focusing Exponential NLS]{Long time dynamics for the focusing nonlinear Schr\"odinger equation with exponential nonlinearities} 

%@@@@@@@@@@@@@@@@@@@@@@@@@@@@@@@@@@@@%@@@@@@@@@@@@@@@@@@@@@@@@@@@@@@@@@@@@%@@@@@@@@
\author[V. D. Dinh, S. Keraani, and M. Majdoub]{Van Duong Dinh, Sahbi Keraani, and Mohamed Majdoub}

%\address[A. Bensouilah]{Laboratoire Paul Painlev\'e UMR 8524, Universit\'e de Lille CNRS, 59655 Villeneuve d'Ascq Cedex, France}
%\email{\sl ai.bensouilah@math.univ-lille1.fr}

\address[V. D. Dinh]{Laboratoire Paul Painlev\'e UMR 8524, Universit\'e de Lille CNRS, 59655 Villeneuve d'Ascq Cedex, France
	\&
	Department of Mathematics, HCMC University of Pedagogy, 280 An Duong Vuong, Ho Chi Minh, Vietnam}
\email{\sl contact@duongdinh.com}

\address[S. Keraani]{Laboratoire Paul Painlev\'e UMR 8524, Universit\'e de Lille CNRS, 59655 Villeneuve d'Ascq Cedex, France}
\email{\sl sahbi.keraani@univ-lille.fr}

\address[M. Majdoub]{Deapartment of Mathematics, College of Science, Imam Abdulrahman Bin Faisal University, P. O. Box 1982, Dammam, Saudi Arabia
	\&
	Basic and Applied Scientific Research Center, Imam Abdulrahman Bin Faisal University, P.O. Box 1982, 31441, Dammam, Saudi Arabia}
\email{\sl mmajdoub@iau.edu.sa}

\subjclass[2010]{35Q55, 35Q44; 35P25}
\keywords{Nonlinear Schr\"odinger equation, Exponential nonlinearity, Ground state, Scattering, Blow-up}

\begin{document}
	
	\begin{abstract}
		In this paper, we study the focusing nonlinear Schr\"odinger equation with exponential nonlinearities
		\[
		\left\{
		\begin{array}{rcl}
		i \partial_t u + \Delta u &=& - \left(e^{4\pi |u|^2} - 1 - 4\pi \mu |u|^2 \right) u, \quad (t,x) \in \R \times \R^2, \\
		u(0) &=& u_0 \in H^1,
		\end{array}
		\right.
		\]
		where $\mu \in \{0, 1\}$. By using variational arguments, we derive invariant sets where the global existence and finite time blow-up occur. In particular, we obtain sharp thresholds for global existence and finite time blow-up. In the case $\mu=1$, we show the asymptotic behavior or energy scattering of global solutions by using a recent argument of Arora-Dodson-Murphy \cite{ADM}. %It turns out that either there exist $t_n\rightarrow +\infty$ and $R_n \rightarrow \infty$ such that $u(t_n)$ vanishes inside $B(0,R_n)$ for all $n\geq 1$ or the solution scatters in $H^1$. 
	\end{abstract}
	
	\maketitle

	\section{Introduction}
	\label{S1}
	\setcounter{equation}{0}
	
	We consider the initial valued problem for nonlinear Schr\"odinger equations with exponential nonlinearities
	\begin{align} \label{NLS}
	\left\{
	\begin{array}{ccl}
	i\partial_t u + \Delta u &=& - f_\mu(u), \quad (t,x) \in \R \times \R^2,  \\
	u(0,x)  &= & u_0(x),
	\end{array}
	\right.
	\end{align}
	where 
	\begin{align} \label{defi-f-mu}
	f_\mu(u) = \left(e^{4\pi |u|^2} - 1 - 4\pi \mu |u|^2 \right) u, \quad \mu \in \{0,1\}.
	\end{align}
	The nonlinear Schr\"odinger equation (NLS) with exponential nonlinearity arises in several physical contexts such as the self-trapped beams in plasma (see e.g. \cite{LLT}). To our knowledge, the first paper studied NLS with exponential nonlinearity goes back to Cazenave \cite{Cazenave-expo} where he considered the Schr\"odinger equation with $f(u) = \left(1-e^{-|u|^2}\right) u$ and showed the global well-posedness and scattering. In this setting, the function $s \mapsto f(s)$ is uniformly bounded together with all its derivatives due to the negative exponent. In our setting, the nonlinearities and their derivatives grow more rapidly than any power for large amplitude. This makes our problem more difficult comparing to the one in \cite{Cazenave-expo}. Another interest of considering \eqref{defi-f-mu} is their relations to the Trudinger-Moser inequality (see Section $\ref{S2}$).
	
	Solutions to \eqref{NLS} formally enjoy the conservation of mass and energy, namely
	\begin{align*}
	M(u(t)) &= \|u(t)\|^2_{L^2} = M(u_0), \tag{Mass} \\
	E_\mu(u(t)) &= \frac{1}{2} \|\nabla u(t)\|^2_{L^2} - \int F_\mu(u(t)) dx = E_\mu(u_0), \tag{Energy} \\
	\end{align*}
	where
	\[
	F_\mu(u) := \frac{1}{8\pi} \left(e^{4\pi |u|^2} - 1 - 4\pi |u|^2 - 8\pi^2 \mu |u|^4 \right).
	\]
	
	The local well-posedness for \eqref{NLS} has been established by Colliander-Ibrahim-Majdoub-Masmoudi \cite{CIMM}. More precisely, the following result holds.
	\begin{theorem} [\cite{CIMM}] \label{theo-lwp}
		Let $u_0 \in H^1$ be such that $\|\nabla u_0\|_{L^2} <1$. Then there exists $T>0$ and a unique solution $u$ to \eqref{NLS} in $C([0,T], H^1)$. Moreover, $u \in L^4([0,T], \mathcal{C}^{1/2})$ and for all $0\leq t\leq T$, $E_\mu(u(t))=E_\mu(u_0)$ and $M(u(t))=M(u_0)$. Here $\mathcal{C}^\alpha$ denotes the space of $\alpha$-H\"older continuous functions endowed with the norm
		\[
		\|u\|_{\mathcal{C}^\alpha} := \|u\|_{L^\infty} + \sup_{x \ne y} \frac{|u(x) - u(y)|}{|x-y|^\alpha}.
		\]
	\end{theorem}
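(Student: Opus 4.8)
The plan is to recast \eqref{NLS} as the Duhamel integral equation
\[
u(t) = e^{it\Delta} u_0 + i \int_0^t e^{i(t-s)\Delta} f_\mu(u(s))\, ds =: \Phi(u)(t),
\]
and to solve it by the contraction mapping principle in the complete metric space
\[
X_T := \left\{ u \in C([0,T],H^1) \cap L^4([0,T],\mathcal{C}^{1/2}) : \|u\|_{L^\infty_t H^1} + \|u\|_{L^4_t \mathcal{C}^{1/2}} \le R \right\},
\]
for a radius $R$ and time $T$ to be fixed. The two exponents defining $X_T$ are dictated by the Schr\"odinger dynamics: the admissible pair $(q,r)=(\infty,2)$ gives the energy-space control $C_t H^1$, while the pairing $L^4_t \mathcal{C}^{1/2}$ comes from an admissible Strichartz estimate carrying one derivative, since $\mathcal{C}^{1/2}$ coincides (up to equivalence of norms) with the Besov space $B^{1/2}_{\infty,\infty}$, which is reachable from $H^1$ data via the two-dimensional Strichartz inequality. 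First I would record the linear bounds $\|e^{it\Delta}u_0\|_{X_T} \lesssim \|u_0\|_{H^1}$ and the corresponding inhomogeneous estimate controlling the Duhamel term by the dual Strichartz norm of $f_\mu(u)$.

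Second, the heart of the argument is the nonlinear estimate bounding $f_\mu(u)$ and its gradient in that dual norm. Differentiating, $\nabla f_\mu(u)$ produces factors of the form $e^{4\pi|u|^2}|\nabla u|$, so one must control integrals of $e^{8\pi|u|^2}|\nabla u|^2$ together with their lower-order analogues. Here the $L^\infty$-in-space control furnished by the $\mathcal{C}^{1/2}$ norm lets me pull the exponential weight out pointwise, while the Trudinger--Moser inequality of Section \ref{S2} supplies the missing integrability: as long as $\|\nabla u\|_{L^2}$ stays below the critical threshold, $\int_{\R^2}\bigl(e^{4\pi|u|^2}-1\bigr)\,dx$ is finite and controlled by $\|u\|_{L^2}^2$. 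This is precisely where the hypothesis $\|\nabla u_0\|_{L^2} < 1$ is indispensable: the constant $4\pi$ in the nonlinearity is the sharp Trudinger--Moser constant in dimension two, so keeping $\|\nabla u\|_{L^2}$ strictly below $1$ (which persists for short times by continuity of the flow) places the exponent in the subcritical regime where these exponential integrals remain bounded.

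Third, combining the linear and nonlinear estimates yields $\|\Phi(u)\|_{X_T} \le C\|u_0\|_{H^1} + C\,T^\theta g(R)$ for some power $\theta>0$ and an increasing function $g$ absorbing the exponential growth; choosing $R \sim \|u_0\|_{H^1}$ and then $T$ small makes $\Phi$ a self-map of the ball, while an analogous difference estimate makes it a contraction, delivering a unique fixed point $u \in X_T$. Finally, conservation of mass and energy follows from the extra regularity $u \in L^4([0,T],\mathcal{C}^{1/2})$ by the standard multiplier computation---pairing the equation with $\bar u$ and taking imaginary parts for the mass, and with $\partial_t \bar u$ for the energy---carried out first on a regularized problem and then passed to the limit. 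The main obstacle throughout is the nonlinear estimate: unlike a power nonlinearity, the exponential term has no uniform Lipschitz bound, so the whole construction hinges on marrying the pointwise exponential control coming from $\mathcal{C}^{1/2}$ with the sharp Trudinger--Moser integrability, and it is exactly this delicate coupling that forces both the smallness condition and the unusual $L^4_t \mathcal{C}^{1/2}$ component of the solution space.
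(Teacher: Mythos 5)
Note first that the paper does not prove Theorem \ref{theo-lwp}: it is quoted from \cite{CIMM}, so your proposal must be measured against the proof there (whose key mechanism the paper itself re-uses in Proposition \ref{prop-crite}). Your skeleton is the right one — Duhamel plus contraction in $C([0,T],H^1)\cap L^4([0,T],\mathcal{C}^{1/2})$, with the admissible pair $(4,4)$ and the embedding $W^{1,4}(\R^2)\hookrightarrow \mathcal{C}^{1/2}(\R^2)$ of \eqref{sobo-holder} supplying the H\"older component. The gap is in the step you yourself identify as the heart of the argument, the nonlinear estimate, and it is fatal as written.

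You propose to ``pull the exponential weight out pointwise,'' i.e. to bound $e^{8\pi|u|^2}\le e^{8\pi\|u(t)\|_{L^\infty}^2}$ and then invoke Trudinger--Moser for the spatial integrability. But the only control you have on $\|u(t)\|_{L^\infty}\lesssim\|u(t)\|_{\mathcal{C}^{1/2}}$ is $L^4$ in time, and the exponential of the square of an $L^4_t$ function need not lie in any $L^p_t$; the time integration in the dual Strichartz norm therefore cannot be closed, no matter how small $T$ is, and shrinking $T$ does not help because no positive power of $T$ is gained. Trudinger--Moser (in the form \eqref{MT1} or the refined \eqref{refi-MT}) does not repair this: it gives spatial $L^1$ control of $e^{4\pi|u|^2}-1$, whereas the estimate on $\scal{\nabla} f_\mu(u)$ requires joint space--time integrability of exponential factors against $|\nabla u|$. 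The missing idea is the logarithmic inequality \eqref{log} of Lemma \ref{lem-log} (from \cite{IMM}): applied with $\beta=1/2$ it converts the exponential into a \emph{power},
\begin{equation*}
e^{4\pi \|u\|_{L^\infty}^2} \lesssim \left(1+\frac{\|u\|_{\mathcal{C}^{1/2}}}{\|u\|_{H_\omega}}\right)^{4\pi\lambda \|u\|^2_{H_\omega}},
\end{equation*}
and the hypothesis $\|\nabla u_0\|_{L^2}<1$ enters precisely here: choosing $\omega$ small and $\lambda$ close to $1/\pi$ makes the exponent $4\pi\lambda\|u\|^2_{H_\omega}$ strictly less than $4$, so the resulting power of $\|u(t)\|_{\mathcal{C}^{1/2}}$ (or $\|u(t)\|_{W^{1,4}}$) is integrable in time against the $L^4_t$ Strichartz bound. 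This is exactly how the paper runs the same computation in Proposition \ref{prop-crite}, where the exponent is $m=2\left(\|\nabla Q_1\|^2_{L^2}+1\right)\in(2,4)$. Without this step your scheme assigns the exponential no quantitative gain from $\|\nabla u_0\|_{L^2}<1$ beyond spatial Trudinger--Moser, which is not enough to make $\Phi$ a contraction. A secondary, smaller flaw: asserting that $\|\nabla u(t)\|_{L^2}<1$ ``persists for short times by continuity of the flow'' is circular inside a fixed-point construction — the constraint must be built into the ball $X_T$ (or recovered a posteriori from the conserved energy), not deduced from the flow you are in the middle of constructing.
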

	Let $T^*$ is the maximal forward time of existence, i.e.
	\[
	T^*:= \sup\{ T>0 \ : \ \text{there exists a solution to \eqref{NLS} on } [0,T]\}.
	\]
	We have the blow-up alternative: either $T^*=+\infty$ or $T^*<+\infty$ and
	\begin{align} \label{blow-alt}
	\limsup_{t\nearrow T^*} \|\nabla u(t)\|_{L^2} =1.
	\end{align}
	
	%\begin{remark}
	%	The assumption $\|\nabla u_0\|_{L^2} < 1$ is needed in the local theory \cite{CIMM}. However, one can construct a solution to \eqref{NLS} with initial data in $H^1$ having arbitrary size. The idea is to split the initial data into a small part in $H^1$ and a smooth one. First, one solves the IVP with smooth initial data to obtain a local solution $v$. Then, one considers the perturbed equation satisfied by $w:=u-v$ and with small initial data. Notice that a similar idea was used by Kenig-Ponce-Vega for a semilinear wave equation \cite{KPV}.
	%\end{remark}

	The main purpose of this paper is to study long time dynamics such as global existence, blow-up and energy scattering for the equation \eqref{NLS}. Before stating our results, let us recall some known results related to \eqref{NLS}. In the defocusing case, i.e. the plus sign in front of the nonlinearity, the global well-posedness in $H^1$ was investigated by Colliander-Ibrahim-Majdoub-Masmoudi \cite{CIMM}. They introduced the notion of criticality as follows: the defocusing problem \eqref{NLS} is said to be subcritical if the energy is strictly smaller than $\frac{1}{2}$, critical if the energy is equal to $\frac{1}{2}$ and supercritical if the energy is strictly greater than $\frac{1}{2}$. They proved that the equation is globally well-posed in $H^1$ in both subcritical and critical regimes, and global solutions satisfy $u \in C(\R, H^1) \cap L^4_{\loc}(\R, W^{1,4})$. Moroever, a sort of ill-posedness was proved in the supercritical case. More precisely, the solution maps $u_0 \mapsto u(t)$ fails to be continuous in $H^1$ as $t\rightarrow 0$.	Afterwards, the energy scattering for the defocusing problem \eqref{NLS} with $\mu=1$ in the subcritical case was established by Ibrahim-Majdoub-Masmoudi-Nakanishi \cite{IMMN}. The proof is based on the a priori global bound $\|u\|_{L^4(\R, L^8)} \leq C(M, E)<\infty$ which was proved independently by Colliander-Grillakis-Tzirakis \cite{CGT} and Planchon-Vega \cite{PV}. Later, the energy scattering with radially symmetric initial data for the defocusing problem \eqref{NLS} with $\mu=1$ in the critical case was proved by Bahouri-Ibrahim-Perelman \cite{BIP}. The proof relies on both the a priori global bound $\|u\|_{L^4(\R,L^8)}$ and the characterization of the lack of compactness of the Sobolev embedding $H^1_{\rad}$ into the critical Orlicz space \cite{BMM}. Recently, Azzam \cite{AZZ} proved the energy scattering for the defocusing problem \eqref{NLS} with $\mu=0$ in the subcritical case. The proof is based on the perturbative argument of \cite{TVZ} by viewing the nonlinearity $f_0$ as a perturbation of the mass-critical NLS. This allows the author to combine the a priori global bound $\|u\|_{L^4(\R, L^8)}$ and the known spacetime estimate for the mass-critical NLS proved by Dodson \cite{Dodson} to obtain the global bound $\|u\|_{L^4(\R, W^{1,4})}$.  
	
	To state our results, let us recall the following notion of ground states related to \eqref{NLS}. By standing wave solutions, we mean solutions to \eqref{NLS} of the form $u(t,x) = e^{it} \phi(x)$, where $\phi \in H^1$ solves the elliptic equation
	\begin{align} \label{ell-equ}
	-\Delta \phi + \phi = f_\mu(\phi).
	\end{align}
	\begin{definition} [Ground state] 
		A non-zero $H^1$ solution $Q$ to \eqref{ell-equ} is called a ground state related to \eqref{ell-equ} if it minimizes the action functional
		\[
		S_\mu(\phi):= E_\mu(\phi) + \frac{1}{2} M(\phi) = \frac{1}{2} \|\nabla \phi\|^2_{L^2} + \frac{1}{2} \|\phi\|^2_{L^2} - \int F_\mu(\phi) dx
		\]
		over all non-trivial solution of \eqref{ell-equ}, that is,
		\[
		S_\mu(Q) = \inf \left\{ S_\mu(\phi) \ : \ \phi \in H^1 \backslash \{0\}, \phi \text{ is a solution to } \eqref{ell-equ} \right\}.
		\]
	\end{definition}
	The existence of ground states related to \eqref{ell-equ} has been studied by many authors. In \cite{JT}, Jeanjean-Tanaka proved a mountain pass characterization of ground states related to \eqref{ell-equ} when the nonlinearity has a subcritical exponential growth. Alves-Souto-Montenegro \cite{ASM} improved the arguments of \cite{JT} by assuming the nonlinearity has a critical exponential growth. Recently, Ruf-Sani \cite{RS} extended Montenegro-Souto's results to a more general class of critical exponetial nonlinearities. More precisely, they proved the following result.
	\begin{theorem}[Existence of ground states \cite{RS}] \label{theo-GS}
		Let $f$ satisfy the following conditions:
		\begin{itemize}
			\item[i.] $f:\R \rightarrow \R$ is continuous and has critical exponential growth, i.e.
			\[
			\lim_{|s| \rightarrow \infty} \frac{|f(s)|}{e^{\alpha s^2}} = \left\{
			\begin{array}{cl}
			0 &\text{if } \alpha>4\pi, \\
			+\infty &\text{if } \alpha<4\pi.
			\end{array}
			\right.
			\]
			\item[ii.] $\displaystyle\lim_{s \rightarrow 0} \frac{f(s)}{s} =0$.
			\item[iii.] There exists $\delta>2$ such that $0<\delta F(s) <sf(s)$ for any $s \ne 0$, where $F(s):=\mathlarger{\int}_0^s f(\tau)d\tau$.
			\item[iv.] $\displaystyle\lim_{|s| \rightarrow +\infty} \frac{sf(s)}{e^{4\pi s^2}} >0$. %\geq \beta_0 >0$.
		\end{itemize}
		Then there exists a ground state $Q$ related to 
		\begin{align} \label{ell-equ-f}
		-\Delta \phi + \phi = f(\phi)
		\end{align}
		which is radially symmetric. In addition,
		\[
		\frac{1}{2} \|\nabla Q\|^2_{L^2} = \inf \left\{\frac{1}{2} \|\nabla \phi\|^2_{L^2} \ : \ \phi \in H^1 \backslash \{0\}, \frac{1}{2} \|\phi\|^2_{L^2} = \int F(\phi) dx \right\}.
		\]
		Moreover, 
		\[
		0<\|\nabla Q\|_{L^2} <1.
		\]
	\end{theorem}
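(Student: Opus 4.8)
The plan is to realise $Q$ as a mountain‑pass critical point of the action $S$ on the radial space $H^1_{\rad}(\R^2)$, and then to identify the mountain‑pass level with the least action over all nontrivial solutions. Write $I(\phi):=S(\phi)=\tfrac12\|\nabla\phi\|_{L^2}^2+\tfrac12\|\phi\|_{L^2}^2-\int F(\phi)\,dx$. First I would verify the mountain‑pass geometry. Condition ii.\ gives $F(s)=o(s^2)$ as $s\to0$, and together with the subcritical Trudinger–Moser inequality (valid while $\|\nabla\phi\|_{L^2}<1$) this yields $I(\phi)\ge\rho>0$ on a small sphere $\|\phi\|_{H^1}=r$. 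The Ambrosetti–Rabinowitz condition iii.\ forces $F(s)\gtrsim|s|^{\delta}$ with $\delta>2$, whence $I(t\phi_0)\to-\infty$ as $t\to+\infty$ for any fixed $\phi_0\neq0$. I then set
\[
c:=\inf_{\gamma\in\Gamma}\ \max_{t\in[0,1]}I(\gamma(t)),\qquad \Gamma:=\{\gamma\in C([0,1],H^1_{\rad}):\gamma(0)=0,\ I(\gamma(1))<0\}.
\]

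The crucial — and hardest — step is the strict level estimate $c<\tfrac12$. \emph{This is the main obstacle.} Following \cite{ASM,RS}, I would test the level with the Moser concentrating sequence $M_n$ (the truncated logarithmic functions), normalised so that $\|\nabla M_n\|_{L^2}=1$, and estimate $\max_{t\ge0}I(tM_n)$. The subquadratic part is harmless; the whole point is that condition iv., which keeps $sf(s)/e^{4\pi s^2}$ bounded below by a positive constant at infinity and hence makes $F$ grow at the full critical rate $e^{4\pi s^2}$, produces an exponential gain that strictly dominates the logarithmic loss carried by $M_n$. Pushing the asymptotic expansion carefully then gives $\max_{t}I(tM_n)<\tfrac12$ for $n$ large, i.e.\ $c<\tfrac12$. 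Getting the constants in this competition to line up is the delicate part of the whole argument.

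With $c<\tfrac12$ in hand I would run the standard Palais–Smale analysis. Condition iii.\ makes any Palais–Smale sequence $\{u_n\}\subset H^1_{\rad}$ at level $c$ bounded, and because the limiting kinetic energy stays strictly below the critical threshold $1$, the exponential nonlinear terms $\int f(u_n)u_n$ and $\int F(u_n)$ do not concentrate. Extracting a weak limit $Q$ in $H^1_{\rad}$, the compact radial embeddings $H^1_{\rad}\hookrightarrow L^p$ for $2<p<\infty$, together with a convergence lemma for the critical term (again using $\|\nabla u_n\|_{L^2}^2<1$, as in \cite{RS}), allow me to pass to the limit and obtain $I'(Q)=0$; the level bound also rules out $Q\equiv0$. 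By the principle of symmetric criticality, $Q$ solves \eqref{ell-equ-f} on all of $H^1$ and is radial.

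Finally I would identify $Q$ as a ground state and read off the stated formulae. The Jeanjean–Tanaka argument \cite{JT} shows that the mountain‑pass level coincides with the least action over the Nehari manifold, and hence over all nontrivial solutions of \eqref{ell-equ-f}, so $S(Q)=c$ is minimal and $Q$ is a ground state. The two‑dimensional Pohozaev identity $\int F(\phi)\,dx=\tfrac12\|\phi\|_{L^2}^2$, valid for every solution, collapses the action to its kinetic part, $S(\phi)=\tfrac12\|\nabla\phi\|_{L^2}^2$; minimising over solutions, equivalently over the Pohozaev constraint $\tfrac12\|\phi\|_{L^2}^2=\int F(\phi)\,dx$, then gives exactly the claimed characterisation of $\tfrac12\|\nabla Q\|_{L^2}^2$. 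Lastly $\|\nabla Q\|_{L^2}>0$ since $Q\neq0$ solves the equation, while $\tfrac12\|\nabla Q\|_{L^2}^2=S(Q)=c<\tfrac12$ yields $\|\nabla Q\|_{L^2}<1$.
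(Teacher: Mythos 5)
First, a structural point: the paper contains no proof of this theorem. It is quoted from Ruf--Sani \cite{RS}, and the only thing the paper verifies is that its specific nonlinearities $f_\mu$ satisfy hypotheses i.--iv. So your sketch cannot be compared to an internal argument; it has to be judged against what is actually required to prove the statement as written, and there it has a genuine gap, located exactly at the step you yourself call the main obstacle.

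Hypothesis iv.\ only provides $\beta:=\lim_{|s|\to\infty}sf(s)e^{-4\pi s^2}>0$, and $\beta$ may be arbitrarily small. The Moser-sequence computation you invoke cannot close the level estimate $c<\tfrac12$ under this assumption alone. Indeed, suppose $\max_{t\ge0}I(tM_n)\ge\tfrac12$ for every $n$, with maximizers $t_n$. Since $F>0$ (by iii.), this forces only $t_n^2\ge\|M_n\|_{H^1}^{-2}\ge 1-C/\log n$, where $C$ comes from $\|M_n\|_{L^2}^2\sim C/\log n$; and the critical-point identity $t_n^2\|M_n\|_{H^1}^2=\int t_nM_nf(t_nM_n)\,dx$, restricted to the ball $\{|x|\le 1/n\}$ where $M_n=\sqrt{\log n/(2\pi)}$, gives in the limit an inequality of the shape $1\ge c_0\,\beta$ with an explicit absolute constant $c_0>0$: the factor $n^{2(t_n^2-1)}$ produced by the exponential is only bounded \emph{below} by the constant $e^{-2C}$, because $t_n^2-1$ is allowed to be as negative as $-C/\log n$. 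This is a contradiction only when $\beta>1/c_0$, i.e.\ under a quantitative largeness assumption on the limit in iv.\ --- precisely the hypothesis of de Figueiredo--Miyagaki--Ruf-type results. Removing that largeness, i.e.\ proving the theorem with mere positivity, is the actual content of \cite{RS}, whose scheme is not mountain-pass-plus-Moser-expansion but a constrained minimization of the kinetic energy on the Pohozaev set $\left\{\tfrac12\|\phi\|_{L^2}^2=\int F(\phi)\,dx\right\}$ together with a finer threshold analysis (this is why exactly that characterization of $\tfrac12\|\nabla Q\|_{L^2}^2$ appears in the conclusion). A secondary overstatement: from $c<\tfrac12$ and the Ambrosetti--Rabinowitz condition one only gets $\limsup\|u_n\|_{H^1}^2\le \tfrac{2\delta c}{\delta-2}$ for a Palais--Smale sequence, which is not $<1$ (it blows up as $\delta\downarrow 2$), so you cannot assert that ``the limiting kinetic energy stays strictly below the critical threshold $1$''; passing to the limit in the equation requires the $L^1$-convergence lemma for $f(u_n)$ rather than a no-concentration statement. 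Note, finally, that for the nonlinearities of this paper one has $\lim_{|s|\to\infty}sf_\mu(s)e^{-4\pi s^2}=+\infty$, so your argument would suffice for the application made here --- but not for the theorem as stated, whose extra generality is the very reason it is attributed to \cite{RS}.
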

	
	We collect some properties of the ground state $Q$. 
	\begin{lemma} \label{lem-pro-Q}
		The ground state $Q$ obtained in Theorem $\ref{theo-GS}$ satisfies the following properties:
		\begin{itemize}
			\item $Q \in C^2 \cap L^\infty$ and $Q$ decays exponentially at infinity.
			\item $Q$ is radially symmetric.
			\item $0<\|\nabla Q\|_{L^2} <1$.
			\item $\|\nabla Q\|^2_{L^2} + \|Q\|^2_{L^2} = \mathlarger{\int} \overline{Q} f(Q) dx$.
			\item $\frac{1}{2} \|Q\|^2_{L^2} = \mathlarger{\int} F(Q) dx$.
		\end{itemize}
	\end{lemma}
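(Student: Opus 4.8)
The plan is to establish each of the five listed properties of $Q$, most of which follow either from the variational characterization already supplied by Theorem~\ref{theo-GS} or from standard elliptic regularity applied to the equation \eqref{ell-equ-f}. I will verify the conditions (i)--(iv) of Theorem~\ref{theo-GS} for $f=f_\mu$ (the map $s\mapsto (e^{4\pi s^2}-1-4\pi\mu s^2)s$ has critical exponential growth, vanishes to order $3$ at the origin, and satisfies the Ambrosetti--Rabinowitz condition (iii) with some $\delta>2$), so that the theorem applies and supplies radial symmetry as well as the bound $0<\|\nabla Q\|_{L^2}<1$. This immediately gives the second and third bullet points.

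For the first bullet, I would feed the equation $-\Delta Q + Q = f_\mu(Q)$ into a bootstrap argument. Starting from $Q\in H^1\subset L^p$ for all $p<\infty$ in two dimensions, and using that $f_\mu$ has at most exponential growth together with the Trudinger--Moser inequality (referenced in Section~\ref{S2}) to control $e^{4\pi|Q|^2}$ in every $L^p$, I obtain $-\Delta Q \in L^p_{\loc}$ and hence $Q\in W^{2,p}_{\loc}\hookrightarrow C^{1,\alpha}_{\loc}$; iterating yields $Q\in C^2$ and $Q\in L^\infty$. The exponential decay at infinity follows from a comparison (maximum) principle argument: outside a large ball where $f_\mu(Q)$ is negligible (since $Q\to 0$), $Q$ is a subsolution of $-\Delta Q + \tfrac{1}{2}Q \le 0$, and comparison with the radial fundamental solution of $-\Delta+\tfrac12$ gives the exponential bound.

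The two integral identities in the last two bullets are the Pohozaev-type relations for \eqref{ell-equ-f}. For the fourth bullet, I would test the equation $-\Delta Q + Q = f_\mu(Q)$ against $\overline{Q}$ and integrate by parts, which directly yields $\|\nabla Q\|_{L^2}^2 + \|Q\|_{L^2}^2 = \int \overline{Q}\,f_\mu(Q)\,dx$; the only point needing care is justifying the integration by parts, which is legitimate given the regularity and decay just established. For the fifth bullet I would use the Pohozaev identity obtained by testing against the dilation generator $x\cdot\nabla Q$: in dimension two the Laplacian term contributes nothing (the $\|\nabla Q\|_{L^2}^2$ terms cancel by scale invariance of the Dirichlet energy), leaving the relation $\|Q\|_{L^2}^2 = 2\int F_\mu(Q)\,dx$, i.e. $\tfrac12\|Q\|_{L^2}^2 = \int F_\mu(Q)\,dx$.

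I expect the main obstacle to be the rigorous justification of the Pohozaev identity in the fifth bullet: the multiplier $x\cdot\nabla Q$ is not globally integrable a priori, so one must cut off with a smooth truncation $\chi(x/R)$, integrate by parts, and control the boundary and commutator terms as $R\to\infty$. Here the exponential decay of $Q$ (and of $\nabla Q$, obtained from the same elliptic regularity bootstrap applied to difference quotients) is exactly what makes the error terms vanish in the limit, so the decay estimate from the first bullet does double duty. The exponential growth of $f_\mu$ means one must also confirm that $\int F_\mu(Q)\,dx$ and $\int \overline{Q}f_\mu(Q)\,dx$ are finite, which again follows from $Q\in L^\infty$ combined with its integrability.
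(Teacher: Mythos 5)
Your proposal is correct and follows essentially the same route as the paper: the second and third bullets are read off from Theorem~\ref{theo-GS}, the fourth comes from testing \eqref{ell-equ-f} against $\overline{Q}$, and the fifth from the Pohozaev multiplier $x\cdot\nabla\overline{Q}$ (whose dimension-two cancellation of the Dirichlet term you identify correctly). The only divergence is the first bullet, where the paper simply cites \cite[Proposition 2.1]{ZO} while you sketch the standard elliptic bootstrap and comparison-principle argument that such a citation encapsulates; that is a fair self-contained substitute, provided one handles the integrability of $e^{4\pi|Q|^2}$ via the usual decomposition of $Q$ into a smooth compactly supported part plus a small-$H^1$ remainder, since $\|\nabla Q\|_{L^2}<1$ alone does not place the exponential in every $L^p$.
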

	\begin{proof}
		The first item follows from \cite[Proposition 2.1]{ZO}. The second and third items follow from Theorem $\ref{theo-GS}$. Multiplying both sides of \eqref{ell-equ-f} with $\overline{Q}$, then integrating over $\R^2$ and performing integration by parts, we get the fourth item. The last item follows by multiplying \eqref{ell-equ-f} with $x\cdot \nabla \overline{Q}$ and integrating over $\R^2$. 
	\end{proof}
	It is easy to check that our nonlinearities $f_\mu$ (see \eqref{defi-f-mu}) satisfy the assumptions i--iv of Theorem $\ref{theo-GS}$. Thus, there exist ground states $Q_\mu$ related to \eqref{ell-equ} which satisfy the properties given in Lemma $\ref{lem-pro-Q}$. It follows that
	\begin{align} \label{chara-P}
	S_\mu(Q_\mu)=\frac{1}{2}\|\nabla Q_\mu\|^2_{L^2} = \inf \left\{S_\mu(\phi) \ : \ \phi \in H^1 \backslash \{0\}, P_\mu(\phi)=0\right\},
	\end{align}
	where
	\[
	P_\mu(\phi):= \frac{1}{2} \|\phi\|^2_{L^2} - \int F_\mu(\phi) dx.
	\]
	Note that if $P_\mu(\phi)=0$, then $S_\mu(\phi) = \frac{1}{2} \|\nabla \phi\|^2_{L^2}$. Let us define the following sets
	\begin{align} \label{inv-sets}
	\begin{aligned}
	\mathcal{A}_\mu^+ &:= \left\{\phi \in H^1 \backslash \{0\} \ : \ S_\mu(\phi)<S_\mu(Q_\mu), P_\mu(\phi)>0 \right\}, \\
	\mathcal{A}_\mu^- &:= \left\{\phi \in H^1 \backslash \{0\} \ : \ S_\mu(\phi)<S_\mu(Q_\mu), P_\mu(\phi)<0 \right\}.
	\end{aligned}
	\end{align}
	Note that by \eqref{chara-P}, 
	\begin{align} \label{union-A}
	\mathcal{A}_\mu^+ \cup \mathcal{A}_\mu^- = \left\{ \phi \in H^1 \backslash \{0\} \ : \ S_\mu(\phi)<S_\mu(Q_\mu) \right\}
	\end{align}
	since
	\[
	\left\{\phi \in H^1 \backslash \{0\} \ : \ S_\mu(\phi)<S_\mu(Q_\mu), P_\mu(\phi)=0 \right\} = \emptyset.
	\]
	By the continuity argument and \eqref{chara-P}, it is easy to see that the sets $\mathcal{A}_\mu^\pm$ are invariant under the flow of \eqref{NLS}.
	
	Our first result is the following global existence for \eqref{NLS}.
	\begin{theorem} [Global existence] \label{theo-gwp}
		Let $\mu \in \{0,1\}$ and $u_0 \in \mathcal{A}_\mu^+$. Then the corresponding solution to \eqref{NLS} exists globally in time.
	\end{theorem}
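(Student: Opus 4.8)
The plan is to rule out the finite-time blow-up branch of the alternative \eqref{blow-alt} by producing a uniform-in-time bound $\|\nabla u(t)\|_{L^2} < 1$ on the whole maximal interval of existence. The starting point is the elementary decomposition
\[
S_\mu(\phi) = \tfrac{1}{2}\|\nabla \phi\|^2_{L^2} + P_\mu(\phi),
\]
already recorded above. First I would check that the hypothesis $u_0 \in \mathcal{A}_\mu^+$ is compatible with the local theory: since $P_\mu(u_0) > 0$ and $S_\mu(u_0) < S_\mu(Q_\mu) = \tfrac{1}{2}\|\nabla Q_\mu\|^2_{L^2}$, the decomposition gives
\[
\tfrac{1}{2}\|\nabla u_0\|^2_{L^2} = S_\mu(u_0) - P_\mu(u_0) < S_\mu(Q_\mu) = \tfrac{1}{2}\|\nabla Q_\mu\|^2_{L^2} < \tfrac{1}{2},
\]
so that $\|\nabla u_0\|_{L^2} < \|\nabla Q_\mu\|_{L^2} < 1$ and Theorem \ref{theo-lwp} furnishes a local solution.

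Next I would propagate this estimate in time. Because $\mathcal{A}_\mu^+$ is invariant under the flow, $u(t) \in \mathcal{A}_\mu^+$ for every $t$ in the maximal interval; in particular $P_\mu(u(t)) > 0$. Combining the conservation of mass and energy,
\[
S_\mu(u(t)) = E_\mu(u(t)) + \tfrac{1}{2} M(u(t)) = E_\mu(u_0) + \tfrac{1}{2} M(u_0) = S_\mu(u_0),
\]
with the same decomposition applied at time $t$ yields
\[
\tfrac{1}{2}\|\nabla u(t)\|^2_{L^2} = S_\mu(u(t)) - P_\mu(u(t)) < S_\mu(u_0) < S_\mu(Q_\mu) = \tfrac{1}{2}\|\nabla Q_\mu\|^2_{L^2},
\]
whence $\|\nabla u(t)\|_{L^2} < \|\nabla Q_\mu\|_{L^2} < 1$ uniformly.

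Finally I would invoke the blow-up alternative \eqref{blow-alt}: were the maximal forward time $T^*$ finite, one would have $\limsup_{t \nearrow T^*}\|\nabla u(t)\|_{L^2} = 1$, contradicting the uniform bound just obtained; hence $T^* = +\infty$. Since the invariance of $\mathcal{A}_\mu^+$ and the conservation laws hold equally for negative times, the identical argument caps the gradient on the backward interval as well, giving global existence on all of $\R$.

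There is no genuine analytic obstacle here once the two ingredients supplied earlier are in place, namely the invariance of $\mathcal{A}_\mu^+$ under the flow and the blow-up alternative. The only point that truly matters is the strict inequality $\|\nabla Q_\mu\|_{L^2} < 1$ from Theorem \ref{theo-GS}: it is precisely this gap that keeps the propagated gradient strictly below the threshold $1$ at which blow-up would be forced, so the argument would collapse if the action threshold $S_\mu(Q_\mu)$ were attained rather than strictly undershot.
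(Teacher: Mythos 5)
Your proof is correct and follows essentially the same route as the paper: the decomposition $S_\mu(\phi)=\tfrac{1}{2}\|\nabla\phi\|^2_{L^2}+P_\mu(\phi)$, the invariance of $\mathcal{A}_\mu^+$ together with conservation of mass and energy to get $\tfrac{1}{2}\|\nabla u(t)\|^2_{L^2}<S_\mu(Q_\mu)=\tfrac{1}{2}\|\nabla Q_\mu\|^2_{L^2}<\tfrac{1}{2}$, and then the blow-up alternative to conclude $T^*=+\infty$. If anything, your write-up is slightly cleaner than the paper's, which states the uniform bound as $\|\nabla u(t)\|_{L^2}<2S_\mu(Q_\mu)<1$ (a harmless slip for $\|\nabla u(t)\|^2_{L^2}<2S_\mu(Q_\mu)<1$), whereas you correctly record $\|\nabla u(t)\|_{L^2}<\|\nabla Q_\mu\|_{L^2}<1$.
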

	
	Our next result concerns the finite time blow-up for \eqref{NLS}. 
	
	\begin{theorem} [Finite time blow-up] \label{theo-blowup}
		Let $\mu \in \{0,1\}$. Let $u_0 \in H^1$ be such that $\|\nabla u_0\|_{L^2}<1$.
		\begin{itemize}
		\item  If $E_\mu(u_0)<0$ and either $u_0 \in L^2(|x|^2 dx)$ or $u_0$ is radially symmetric, then the corresponding solution to \eqref{NLS} blows up in finite time.
		\item If $E_\mu(u_0) \geq 0$, $u_0 \in \mathcal{A}^-_\mu$ and either $u_0 \in L^2(|x|^2 dx)$ or $u_0$ is radially symmetric, then the corresponding solution to \eqref{NLS} blows up in finite time.
		\end{itemize} 
	\end{theorem}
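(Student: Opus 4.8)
The plan is to run a virial (convexity) argument driven by a single functional. Writing $u(t)$ for the solution and, in the finite–variance case, $V(t):=\int|x|^2|u(t,x)|^2\,dx$, a direct computation from \eqref{NLS} gives $V'(t)=4\,\ima\int\bar u\,x\cdot\nabla u\,dx$ and the virial identity
\[
V''(t)=8\,\mathcal{I}_\mu(u(t)),\qquad \mathcal{I}_\mu(\phi):=\|\nabla\phi\|_{L^2}^2-\int\Big(\rea\big(\bar\phi f_\mu(\phi)\big)-2F_\mu(\phi)\Big)\,dx .
\]
Here $\mathcal{I}_\mu(\phi)=\tfrac{d}{d\lambda}\big|_{\lambda=1}E_\mu(\phi^\lambda)$ is the derivative of the energy along the mass–preserving scaling $\phi^\lambda(x):=\lambda\phi(\lambda x)$, and I would record this identity first (after a standard regularization justifying it for $H^1$ data) since it is the common engine of both bullets. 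The whole argument then reduces to a time–uniform bound $V''(t)\le -c<0$: invoking the blow-up alternative \eqref{blow-alt}, if $T^*=+\infty$ then $V(t)\le V(0)+V'(0)t-\tfrac{c}{2}t^2\to-\infty$, contradicting $V\ge0$.

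For the first bullet ($E_\mu(u_0)<0$) I would prove the pointwise inequality $\rea(\bar\phi f_\mu(\phi))\ge 4F_\mu(\phi)$; with $s=|\phi|^2$ this reduces to the elementary estimate $(\tfrac{1}{2\pi}-s)e^{4\pi s}\le\tfrac{1}{2\pi}+s$ for $s\ge0$, which follows by differentiating twice (the $\mu$–dependent quartic terms cancel, so the bound is identical for $\mu\in\{0,1\}$). Since $\mathcal{I}_\mu(\phi)-2E_\mu(\phi)=\int\big(4F_\mu(\phi)-\rea(\bar\phi f_\mu(\phi))\big)\,dx\le0$, this yields $\mathcal{I}_\mu(\phi)\le2E_\mu(\phi)$, hence $V''(t)\le16E_\mu(u(t))=16E_\mu(u_0)<0$ by conservation of energy, and the convexity argument closes the case.

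For the second bullet ($E_\mu(u_0)\ge0$, $u_0\in\mathcal{A}^-_\mu$) energy alone is not negative, so I would exploit the invariance of $\mathcal{A}^-_\mu$ together with conservation of the action $S_\mu$. The key is the variational bound $\mathcal{I}_\mu(\phi)\le2\big(S_\mu(\phi)-S_\mu(Q_\mu)\big)$ for every $\phi\in\mathcal{A}^-_\mu$. Writing $F_\mu(\phi)=\Phi(|\phi|^2)$ and $g(s):=S_\mu(\phi^{\sqrt s})$ along the mass–preserving scaling, one checks $g'(s)=\tfrac{1}{2s}\mathcal{I}_\mu(\phi^{\sqrt s})$ and, after reducing to the pointwise inequality $-2\Phi(\tau)+2\tau\Phi'(\tau)-\tau^2\Phi''(\tau)\le0$ (again the $\mu$–terms cancel and one differentiates once), that $g$ is \emph{concave}. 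Since $P_\mu(\phi^{\sqrt s})$ is nonincreasing (by convexity of $\Phi$) and vanishes at some $s_0<1$ because $P_\mu(\phi)<0$, the characterization \eqref{chara-P} gives $g(s_0)\ge S_\mu(Q_\mu)>S_\mu(\phi)=g(1)$; concavity then forces the maximum of $g$ to lie at some $s^*<1$ and, via $g(s^*)-g(1)\le g'(1)(s^*-1)$, yields $\mathcal{I}_\mu(\phi)=2g'(1)\le2\big(S_\mu(\phi)-S_\mu(Q_\mu)\big)$. As $S_\mu(u(t))\equiv S_\mu(u_0)<S_\mu(Q_\mu)$ and $u(t)\in\mathcal{A}^-_\mu$ for all $t$, we get $V''(t)\le16\big(S_\mu(u_0)-S_\mu(Q_\mu)\big)<0$, closing this case as well.

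I expect the main obstacle to be the non–finite–variance \emph{radially symmetric} case, where $V(t)$ need not be finite. There I would replace $|x|^2$ by a truncated weight $\varphi_R(x)=R^2\varphi(x/R)$ with $\varphi(x)=|x|^2$ near the origin, so that $V_R''(t)=8\mathcal{I}_\mu(u(t))+\mathcal{E}_R(t)$ with an error $\mathcal{E}_R$ supported in $\{|x|\ge R\}$. Controlling $\mathcal{E}_R$ is the delicate point, because it contains the exponential nonlinearity, which grows faster than any power; I would bound it using the radial Strauss decay $|u(x)|\lesssim\|u\|_{H^1}|x|^{-1/2}$ together with the a priori bound $\|\nabla u\|_{L^2}<1$ and the Trudinger–Moser inequality (Section~\ref{S2}), making $\mathcal{E}_R$ uniformly small for $R$ large. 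Combined with the negative bounds $8\mathcal{I}_\mu(u(t))\le16E_\mu(u_0)<0$ and $8\mathcal{I}_\mu(u(t))\le16(S_\mu(u_0)-S_\mu(Q_\mu))<0$ obtained above, this gives $V_R''(t)\le-c/2$ uniformly in $t$ for $R$ large, and the same convexity argument forces $T^*<+\infty$.
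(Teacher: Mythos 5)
Your proposal is correct. For the first bullet and for the passage to radial data it coincides with the paper's own proof: the same virial identity \eqref{virial}, the same pointwise inequality $\overline{\phi}f_\mu(\phi)\ge 4F_\mu(\phi)$ (your $(\tfrac{1}{2\pi}-s)e^{4\pi s}\le \tfrac{1}{2\pi}+s$ is exactly the paper's $g_\mu(s)\ge 0$ in Lemma \ref{lem-blow-nega-Sig}), and the same truncated weight with Strauss decay controlling the exponential tail, yielding errors $O(R^{-2})+O(e^{CR^{-1}}-1)$ as in Lemma \ref{lem-viri-est}. Where you genuinely diverge is the second bullet. The paper introduces the sets $\mathcal{K}^\pm_\mu$ defined through $I_\mu$, proves the $I_\mu$-characterization \eqref{S-I} of $S_\mu(Q_\mu)$ (Lemma \ref{lem-S-I}), obtains $I_\mu(\phi)\le 2(S_\mu(\phi)-S_\mu(Q_\mu))$ on $\mathcal{K}^-_\mu$ by integrating $\partial_\lambda I_\mu(\phi_\lambda)\le 2\partial_\lambda S_\mu(\phi_\lambda)$ from the zero of $\lambda\mapsto I_\mu(\phi_\lambda)$ --- whose existence in $(0,1)$ is precisely where the hypothesis $E_\mu\ge 0$ enters for $\mu=0$ (Remark \ref{rem-I}) --- and then still has to prove the equivalence $\mathcal{A}^-_\mu\equiv\mathcal{K}^-_\mu$ (Lemma \ref{lem-equi-A-K}), itself a nontrivial bootstrap through Theorem \ref{theo-gwp}, Lemma \ref{lem-blowup-Sigma}, Lemma \ref{lem-posi-ener}, openness and density of $\Sigma$. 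You instead work directly with $P_\mu$: the zero $s_0\in(0,1)$ of $s\mapsto P_\mu(\phi^{\sqrt{s}})$ exists unconditionally (monotonicity plus $P_\mu(\phi^{\sqrt s})\to\tfrac12\|\phi\|^2_{L^2}>0$ as $s\to 0$), the characterization \eqref{chara-P} gives $g(s_0)\ge S_\mu(Q_\mu)$ there, and concavity of $g(s)=S_\mu(\phi^{\sqrt s})$ closes the estimate at $s=1$. Note that your concavity claim is exactly equivalent to the paper's differential inequality: both reduce to the same pointwise bound, your $e^s\bigl(1-s+\tfrac{s^2}{2}\bigr)\ge 1$ being the paper's $m_\mu(s)\le 0$. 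So no new estimate is involved, but the reorganization buys something real: you bypass $\mathcal{K}^\pm_\mu$, Lemma \ref{lem-S-I} and Lemma \ref{lem-equi-A-K} entirely, and your variational bound holds for every $\phi\in\mathcal{A}^-_\mu$ with no sign condition on the energy (which the paper cannot avoid for $\mu=0$ along its route). The paper's heavier machinery is not wasted --- the $I_\mu$-based sets and Lemma \ref{lem-S-I} are reused for the coercivity underlying the scattering result in Section \ref{S4} and for Lemma \ref{lem-example} --- but as a proof of Theorem \ref{theo-blowup} alone, yours is shorter and marginally more general. One refinement to your radial sketch: the kinetic part of the error $\mathcal{E}_R$ is not small and cannot be handled by Strauss decay; one needs the standard sign condition $\varphi_R''\le 2$ so that, for radial $u$, the term $4\sum_{j,k}\int\partial^2_{jk}\varphi_R\,\mathrm{Re}(\partial_j\overline{u}\,\partial_k u)\,dx - 8\|\nabla u\|^2_{L^2}$ is nonpositive; only the $\Delta^2\varphi_R$ term and the nonlinear tail are genuinely small, exactly as in the paper's Lemma \ref{lem-viri-est}.
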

	
	The proof of the finite time blow-up is closely related to the virial functional
	\[
	I_\mu(\phi):= \|\nabla \phi\|_{L^2}^2 - \int \overline{\phi} f_\mu(\phi) - 2 F_\mu(\phi) dx = 2 E_\mu(\phi) - \int \overline{\phi} f_\mu(\phi) - 4 F_\mu(\phi) dx.
	\]
	The functional $I_\mu$ is nothing but the second time derivative of $\|xu(t)\|^2_{L^2}$ (see \eqref{virial-proof}), namely
	\begin{align} \label{virial}
	\frac{d^2}{dt^2} \|xu(t)\|^2_{L^2} = 8 I_\mu(u(t)), \quad \forall t\in [0,T^*).
	\end{align}
	The finite time blow-up for negative energy initial data follows easily by noting that
	\[
	\int \overline{\phi} f_\mu(\phi) - 4 F_\mu(\phi) dx\geq 0, \quad \forall \phi \in H^1.
	\]
	The one for non-negative energy initial data is more involved. To this end, we observe (see \eqref{S-I}) 
	\begin{align} \label{S-I-intro}
	S_\mu(Q_\mu) = \inf \left\{ S_\mu(\phi) \ : \ \phi \in H^1 \backslash \{0\}, I_\mu(\phi) =0 \right\}
	\end{align}
	and define 
	\begin{align} \label{def-K-pm}
	\mathcal{K}_\mu^- &:= \left\{ \phi \in H^1 \backslash \{0\} \ : \ S_\mu(\phi)<S_\mu(Q_\mu), I_\mu(\phi)<0\right\}, \\
	\mathcal{K}_\mu^+ &:= \left\{ \phi \in H^1 \backslash \{0\} \ : \ S_\mu(\phi)<S_\mu(Q_\mu), I_\mu(\phi)>0\right\}.
	\end{align}
	Using \eqref{S-I-intro} and the continuity argument, it is easy to see that the sets $\mathcal{K}_\mu^\pm$ are invariant under the flow of \eqref{NLS}. Note that
	\begin{align} \label{union}
	\mathcal{K}_\mu^- \cup \mathcal{K}_\mu^+ = \{\phi \in H^1 \backslash \{0\} \ : \ S_\mu(\phi)<S_\mu(Q_\mu)\} = \mathcal{A}_\mu^- \cup \mathcal{A}_\mu^+
	\end{align}
	since
	\[
	\left\{ \phi \in H^1 \backslash \{0\} \ : \ S_\mu(\phi)<S_\mu(Q_\mu), I_\mu(\phi)=0\right\} = \emptyset.
	\]
	By using variational arguments, we show (see Lemma $\ref{lem-est-I}$) that if $\phi \in H^1$ satisfies $E_\mu(\phi)\geq 0$ and $\phi \in \mathcal{K}^-_\mu$, then 
	\begin{align} \label{est-I-intro}
	I_\mu(\phi) \leq 2(S_\mu(\phi) - S_\mu(Q_\mu)).
	\end{align}
	Thanks to \eqref{est-I-intro}, the standard convexity argument of Glassey \cite{Glassey} implies the finite time blow-up for initial data in $\mathcal{K}^-_\mu$ satisfying some additional conditions. The result then follows by observing that $\mathcal{A}^-_\mu \equiv \mathcal{K}^-_\mu$ (see Lemma $\ref{lem-equi-A-K}$). We refer the reader to Section $\ref{S3}$ for more details.
	
	\begin{remark}
		We can construct an initial data $u_0 \in H^1 \cap L^2(|x|^2 dx)$ satisfying $\|\nabla u_0\|_{L^2} <1$ and $E_\mu(u_0)<0$ as follows. Let $\varphi \in H^1 \cap L^2(|x|^2 dx)$ be such that $\|\nabla \varphi\|_{L^2}<1$ (take for example $\varphi(x) = \frac{e^{-|x|}}{\sqrt{2\pi}}$). For $\lambda>0$, we denote $u_0(x)=\varphi(\lambda x)$. It follows that $\|\nabla u_0\|_{L^2} = \|\nabla \varphi\|_{L^2}<1$ and
		\[
		E_\mu(u_0) = \frac{1}{2} \|\nabla \varphi\|^2_{L^2} - \lambda^{-2} \int F_\mu(\varphi) dx <0
		\]
		provided 
		\[
		0<\lambda< \frac{\sqrt{2 \mathlarger{\int} F_\mu(\varphi) dx}}{\|\nabla \varphi\|_{L^2}}.
		\]
	\end{remark}
	
	\begin{remark}
		By Lemma $\ref{lem-equi-A-K}$ and Lemma $\ref{lem-example}$, there exists an initial data $u_0 \in H^1 \cap L^2(|x|^2 dx)$ satisfying $\|\nabla u_0\|_{L^2}<1$, $E_\mu(u_0) > 0$ and $u_0 \in \mathcal{A}^-_\mu$. 
	\end{remark}

	\begin{remark}
		We will see in Lemma $\ref{lem-posi-ener}$ that if $u_0 \in \mathcal{A}^+_\mu$, then $E_\mu(u_0) \geq 0$. Thus, by Theorem $\ref{theo-gwp}$ and Theorem $\ref{theo-blowup}$, we obtain sharp thresholds (within the radial or finite variance framework) for global existence and finite time blow-up for \eqref{NLS}.
	\end{remark}
	
	Our next result is the following energy scattering for \eqref{NLS} with radially symmetric initial data. 
	\begin{theorem} \label{theo-scat}
		Let $\mu=1$. Let $u_0 \in \mathcal{A}^+_1$ and $u_0$ be radially symmetric. Let $u$ be the corresponding global solution to \eqref{NLS}. Then the corresponding solution to \eqref{NLS} scatters in $H^1$ in both directions, i.e. there exists $u^{\pm} \in H^1$ such that
		\[
		\lim_{t\rightarrow \pm\infty} \|u(t) - e^{it\Delta} u^\pm\|_{H^1} =0.
		\]
	\end{theorem}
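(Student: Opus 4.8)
The plan is to follow the virial/Morawetz scheme of Arora--Dodson--Murphy \cite{ADM}, which deduces scattering from a truncated virial identity, the coercivity coming from the constraint $u_0 \in \mathcal{A}_1^+$, a scattering criterion of Tao--Dodson--Murphy type, and the radial Sobolev embedding, thereby avoiding concentration--compactness. First I would record the a priori bounds that drive the whole scheme. By Theorem~\ref{theo-gwp} the solution $u$ is global, and since $\mathcal{A}_1^+ = \mathcal{K}_1^+$ is invariant (see \eqref{union}), we have $S_1(u(t)) < S_1(Q_1)$ and $I_1(u(t)) > 0$ for all $t$. Feeding these into the conservation of $S_1$ and $M$, together with $S_1(Q_1) = \frac{1}{2}\|\nabla Q_1\|_{L^2}^2$ and $\|\nabla Q_1\|_{L^2} < 1$, I would extract a uniform bound $\sup_t \|u(t)\|_{H^1} < \infty$ and, crucially, a strict gap $\sup_t \|\nabla u(t)\|_{L^2}^2 \le 1 - \delta$ for some $\delta > 0$. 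The gap is what keeps the solution inside the Trudinger--Moser regime for all time, which is the only mechanism available to control the exponential nonlinearity uniformly.

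Second, I would establish the variational coercivity that converts the positivity of the virial into a quantitative lower bound. Arguing as in the derivation of \eqref{est-I-intro}, but on the positive side $\mathcal{K}_1^+$, a scaling and minimization argument based on the characterization \eqref{S-I-intro} should produce a constant $c = c(\delta) > 0$ such that
\[
I_1(u(t)) \ge c\,\|\nabla u(t)\|_{L^2}^2 \qquad \text{for all } t.
\]
Equivalently, below the ground state the nonlinear part of the virial is strictly dominated by the kinetic energy. This is the exponential analogue of the standard fact that energy below the ground state forces a gradient-coercive virial, and here it again rests on the Trudinger--Moser inequality in the sub-threshold regime of Step~1.

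Third comes the heart of the matter, the truncated virial (Morawetz) estimate. I would take a radial weight $a(x) = R^2\, a_0(|x|/R)$ with $a_0$ smooth, $a_0(r) = r^2$ for $r \le 1$, and $a_0'' \le 2$, and study $V_R(t) := 2\,\ima \int \overline{u}\, \nabla a \cdot \nabla u\, dx$, which satisfies $|V_R(t)| \lesssim R\, \sup_t \|u(t)\|_{L^2}\|\nabla u(t)\|_{L^2} < \infty$ uniformly in $t$. Differentiating in time, the main term reproduces $8\,I_1(u(t))$ up to error terms supported in $\{|x| \ge R\}$ coming from the Hessian, bi-Laplacian, and nonlinear contributions of the transition region. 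Here the radial Sobolev bound $|u(t,x)|^2 \lesssim |x|^{-1} \|u(t)\|_{H^1}^2$ is decisive: on $\{|x| \ge R\}$ the amplitude is as small as $R^{-1/2}$, so the nonlinearity $e^{4\pi|u|^2} - 1 - 4\pi|u|^2$ is effectively quartic there and, using the uniform $H^1$ bound and the Trudinger--Moser control of Step~1, every nonlinear error is $o_R(1)$ uniformly in $t$.

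Finally, I would combine this with a scattering criterion. Suppose, toward a contradiction, that $u$ does not scatter forward in time; then a Tao--Dodson--Murphy scattering criterion for radial $H^1$ solutions of bounded norm forces the local mass $\int_{|x| \le R_0} |u(t,x)|^2\,dx$ to remain bounded below by some $\eta > 0$ for all large $t$ (for a fixed $R_0$ determined by $\sup_t\|u\|_{H^1}$). Choosing $R$ large, the coercivity of Step~2 together with the non-escape of this mass upgrades the error-corrected virial identity into a uniform lower bound $\frac{d}{dt} V_R(t) \ge c' > 0$ for all large $t$, so that $V_R(t) \to +\infty$, contradicting the uniform bound $|V_R(t)| \lesssim R$. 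Hence $u$ scatters forward, and the same argument run backward gives the two-sided statement, yielding $u^\pm \in H^1$ with $\|u(t) - e^{it\Delta}u^\pm\|_{H^1} \to 0$. I expect the main obstacle throughout to be precisely the exponential nonlinearity in the error, coercivity, and scattering-criterion estimates: one cannot use homogeneity as in the pure-power case, so every bound must be closed via the Trudinger--Moser inequality using both the strict gap $\|\nabla u(t)\|_{L^2}^2 \le 1-\delta$ and the radial decay---which is exactly why both the radial hypothesis and the sub-ground-state constraint are indispensable.
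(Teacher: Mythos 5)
Your outline correctly identifies the ADM philosophy that the paper follows (truncated virial, sub-ground-state coercivity, a scattering criterion, radial Sobolev), but Step 3 contains a genuine gap that the rest of the argument cannot survive. The truncated virial identity does \emph{not} reproduce $8I_1(u(t))$ up to $o_R(1)$ errors. Writing out $\frac{d^2}{dt^2}V_{\varphi_R}$, the radial Sobolev embedding only makes the \emph{nonlinear} exterior contributions small (of size $e^{CR^{-1}}-1$); the \emph{kinetic} exterior term $\int_{|x|>R}|\nabla u(t)|^2\,dx$ is $O(1)$, not $o_R(1)$, and it enters with the wrong sign for a lower bound (the Hessian term outside the ball only satisfies $0\le 4\int_{|x|>R}\varphi_R''|\partial_r u|^2\,dx \le 8\int_{|x|>R}|\nabla u|^2\,dx$). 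What the virial actually bounds from below is the \emph{localized} functional $\int_{|x|\le R}|\nabla u|^2 - \int_{|x|\le R}H_1(u)\,dx$, so your Step 2 coercivity for the un-truncated solution $u(t)$ — even granting its (plausible, but only sketched) proof — can never be applied. The paper resolves this by comparing the localized functional with $I_1(\chi_R u(t))$ and proving coercivity for the \emph{truncated} function, which requires a separate mechanism you do not have: showing $\chi_R u(t)\in\overline{\Ac}^+_1$ for all $t$ and all large $R$. This occupies Lemma \ref{lem-open-set} (openness of $\overline{\Ac}^+_1$, via the refined Moser--Trudinger inequality), Lemma \ref{lem-est-S1-chi-R} ($S_1(\chi_R u(t))<S_1(Q_1)$ for $R\ge R_0$), and a continuity-in-$R$ argument in Lemma \ref{lem-L6}; the coercivity output there is $I_1(\chi_R u(t))\gtrsim \|\chi_R u(t)\|^6_{L^6}$, which is what yields the quantitative Morawetz bound $\int_I\|u\|^6_{L^6}\,dt\lesssim |I|^{1/3}$.

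The second gap is Step 4: you invoke a ``Tao--Dodson--Murphy scattering criterion'' as a black box, but for an exponential nonlinearity no such criterion exists off the shelf, and proving one is a substantial portion of the paper. The paper does not run your contradiction/local-mass argument at all; instead it proves two statements from scratch: Proposition \ref{prop-crite}, that a global $L^8_{t,x}$ bound implies scattering — whose proof needs the logarithmic inequality \eqref{log}, the refined Moser--Trudinger estimates, and a careful choice of exponents ($m=2(\|\nabla Q_1\|^2_{L^2}+1)\in(2,4)$, $n=2(4-m)$) so that the Duhamel term is \emph{superlinear} in the small quantity and the continuity argument closes — and Proposition \ref{prop-scat}, that the $L^8$ bound actually holds, proved by the ADM time-decomposition using the $L^6_{t,x}$ Morawetz bound, dispersive estimates, and again exponent bookkeeping ($4\theta>1$, $2\theta/3>1/4$) that is delicate precisely because the nonlinearity is not homogeneous. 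Your closing remark that ``every bound must be closed via Trudinger--Moser'' names the difficulty but does not engage with it; as written, both the localization of the coercivity and the scattering criterion — the two pillars of the actual proof — are missing.
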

	
	Let us briefly describe the strategy of the proof. First, we show (see Lemma \ref{lem-open-set}) that 
	\begin{align} \label{defi-overline-A+}
	\overline{\Ac}^+_1:= \left\{\phi \in H^1 \ : \ S_1(\phi) < S_1(Q_1), P_1(\phi) \geq 0\right\} = \Ac^+_1 \cup \{0\}
	\end{align}
	is an open set of $H^1$. This is done by proving that functions with small $H^1$-norm belong to $\Ac^+_1$. To see this, we make use of a refined Moser-Trundinger inequality (see Corollary \ref{coro-refi-MT}) due to \cite{BIP}.
	
	Second, we prove that for data in $\Ac^+_1$, there exists $R_0=R_0(u_0,Q_1)>0$ sufficiently large such that the corresponding solution satisfies $\chi_{R} u(t) \in \overline{\Ac}^+_1$ for all $R\geq R_0$ and all $t\in \R$. Here $\chi_R$ is a suitable cutoff function.
	
	Third, thanks to the above observation and an argument using localized Morawetz estimates, we show that there exists $C=C(u_0,Q_1)>0$ such that
	\begin{align} \label{coer-est-intro}
	I_1(\chi_R u(t)) \geq C \|\chi_R u(t)\|^6_{L^6}
	\end{align}
	for all $R\geq R_0$ and all $t\in \R$. 
	
	Finally, using \eqref{coer-est-intro} and a modified argument of Arora-Dodson-Murphy's approach \cite{ADM}, we prove the global bound $\|u\|_{L^8(\R \times \R^2)} \leq C(u_0,Q_1)<\infty$ which yields the energy scattering. We refer the reader to Section \ref{S4} for more details.

	\begin{remark}
		It is expected that a same result holds for radial initial data in $\mathcal{A}^+_0$. In fact, most of results given in Section $\ref{S4}$ hold with $\mathcal{A}^+_0$ in place of $\mathcal{A}^+_1$. However, due to the critical nonlinearity $|u|^2u$ hidden in $f_0(u)$, we are not able to obtain similar scattering criteria as in Proposition $\ref{prop-scat}$. We hope to solve this proplem in a forthcoming work. 
	\end{remark}
	
	\begin{remark}
		In the preparation of this paper, we learnt that the energy scattering with radially symmetric initial data for the focusing NLS with exponential nonlinearity similar to $f_1$ was proved in a recent preprint \cite{GS}. The proof in \cite{GS} is also based on an argument of Arora-Dodson-Murphy \cite{ADM}, however, their proof is very different from ours. 
	\end{remark}
	
	The paper is organized as follows. In Section $\ref{S2}$, we recall some preliminaries needed in the paper such as Trundinger-Moser inequalities, the logarithmic inequality and Strichartz estimates. In Section $\ref{S3}$, we give the proofs of the global existence and finite time blow-up given in Theorem $\ref{theo-gwp}$ and Theorem $\ref{theo-blowup}$. Finally, in Section $\ref{S4}$, we give the proof of the long time dynamics for radially symmetric initial data given in Theorem $\ref{theo-scat}$.

	\section{Preliminaries}
	\label{S2}
	\setcounter{equation}{0}
	\subsection{Some useful inequalities}
	In this section, we recall some useful inequalities which are needed in the sequel. The first one is the following classical Moser-Trudinger inequality \cite{AT}.
	\begin{proposition}
		\label{prop-MT}
		Let $\alpha\in [0,4\pi)$. A constant $C_\alpha>0$ exists
		such that
		\begin{equation}
		\label{MT1}
		\int_{\R^2}\left(e^{\alpha |u|^2}-1\right)dx\leq C_\alpha
		\|u\|_{L^2}^2,
		\end{equation}
		for all $u\in H^1$ such that $\|\nabla
		u\|_{L^2} \leq 1$. Moreover, if $\alpha\geq 4\pi$, then \eqref{MT1} is false.
	\end{proposition}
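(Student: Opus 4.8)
The plan is to follow the scale-invariant symmetrization approach of Adachi--Tanaka. The quantity being estimated is invariant under the dilation $u \mapsto u(\lambda\,\cdot)$ in two dimensions: one has $\|\nabla u(\lambda\,\cdot)\|_{L^2} = \|\nabla u\|_{L^2}$, while $\|u(\lambda\,\cdot)\|_{L^2}^2$ and $\int_{\R^2}(e^{\alpha|u(\lambda\,\cdot)|^2}-1)\,dx$ both scale by the same factor $\lambda^{-2}$, so the quotient of the two is unchanged. This lets me normalize $\|u\|_{L^2}=1$, reducing \eqref{MT1} to the bound $\int_{\R^2}(e^{\alpha|u|^2}-1)\,dx \le C_\alpha$. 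First I would reduce to radial data: by the P\'olya--Szeg\H{o} inequality the symmetric decreasing rearrangement $u^*$ satisfies $\|\nabla u^*\|_{L^2}\le \|\nabla u\|_{L^2}$, preserves $\|u\|_{L^2}$, and (since rearrangement preserves the distribution function and $s\mapsto e^{\alpha s}-1$ is increasing) leaves $\int_{\R^2}(e^{\alpha|u|^2}-1)\,dx$ unchanged. Hence it suffices to bound the integral over nonnegative, radially symmetric, non-increasing $u$ with $\|\nabla u\|_{L^2}\le 1$ and $\|u\|_{L^2}=1$.

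For such $u$ I would record the elementary radial decay estimate: since $u$ is non-increasing, $\pi r^2 u(r)^2 \le \int_{|x|\le r}|u|^2 \le 1$, so $u(r)\le 1/(\sqrt{\pi}\,r)$. I then split $\R^2 = \{|x|\le r_0\}\cup\{|x|>r_0\}$, where $r_0=r_0(\alpha)$ is fixed so that $\alpha u(r_0)^2 \le \tfrac12$. On the outer region the pointwise bound forces $\alpha u^2 \le \tfrac12$, whence $e^{\alpha u^2}-1 \le C\alpha u^2$ and $\int_{|x|>r_0}(e^{\alpha u^2}-1)\,dx \le C\alpha\|u\|_{L^2}^2 = C\alpha$. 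The outer contribution is therefore already of the desired size.

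The core is the inner estimate on $\{|x|\le r_0\}$, where I would invoke Moser's one-dimensional reduction: setting $t=2\log(r_0/|x|)$ and $w(t)=\sqrt{4\pi}\,u(r_0 e^{-t/2})$ transforms the radial Dirichlet integral into $\int_0^\infty \dot w(t)^2\,dt \le \|\nabla u\|_{L^2}^2 \le 1$ and the inner integral into $\pi r_0^2\int_0^\infty (e^{(\alpha/4\pi)w(t)^2}-1)e^{-t}\,dt$. In contrast to Moser's bounded-domain setting, $u$ does not vanish on $\{|x|=r_0\}$, so $w(0)=\sqrt{4\pi}\,u(r_0)\le 2/r_0$ is nonzero; handling this boundary term is the main obstacle. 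By Cauchy--Schwarz, $w(t)\le w(0)+\sqrt{t}\,(\int_0^t \dot w^2\,ds)^{1/2}\le 2/r_0 + \sqrt{t}$, so $(\alpha/4\pi)w(t)^2 - t \le (\alpha/4\pi - 1)t + C(\alpha)\sqrt{t} + C(\alpha)$. Since $\alpha<4\pi$ the coefficient $\alpha/4\pi-1$ is strictly negative, which dominates the subexponential $\sqrt{t}$ term and makes $\int_0^\infty e^{(\alpha/4\pi)w^2-t}\,dt$ converge to a constant depending only on $\alpha$; at $\alpha=4\pi$ this gap closes, which is exactly where the scale-invariant bound must fail. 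Reinserting the normalization via the scaling invariance then yields $\int_{\R^2}(e^{\alpha|u|^2}-1)\,dx \le C_\alpha\|u\|_{L^2}^2$.

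Finally, for the sharpness claim I would exhibit the Moser test sequence
\[
u_n(x) = \frac{1}{\sqrt{2\pi}}\left\{
\begin{array}{cl}
\sqrt{\log n}, & |x|\le 1/n,\\
\dfrac{\log(1/|x|)}{\sqrt{\log n}}, & 1/n<|x|\le 1,\\
0, & |x|>1.
\end{array}
\right.
\]
A direct computation gives $\|\nabla u_n\|_{L^2}=1$ and $\|u_n\|_{L^2}^2 = O(1/\log n)\to 0$. On the core ball $|x|\le 1/n$ one has $4\pi u_n^2 = 2\log n$, so $\int_{|x|\le 1/n}(e^{4\pi u_n^2}-1)\,dx = \pi n^{-2}(n^2-1)\to \pi$, while $\|u_n\|_{L^2}^2\to 0$. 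The quotient diverges, showing that \eqref{MT1} fails at $\alpha=4\pi$ and a fortiori for $\alpha>4\pi$.
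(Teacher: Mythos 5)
The paper gives no proof of this proposition at all: it is imported verbatim from Adachi--Tanaka \cite{AT}. Your proposal is a correct, self-contained reconstruction of precisely that cited argument, so there is nothing to contrast with in the paper itself; the value of your write-up is that it makes the black box explicit. The key steps all check out: the ratio $\int_{\R^2}(e^{\alpha|u|^2}-1)\,dx / \|u\|_{L^2}^2$ is indeed invariant under $u\mapsto u(\lambda\,\cdot)$ in dimension two, so the normalization $\|u\|_{L^2}=1$ is legitimate; P\'olya--Szeg\H{o} plus equimeasurability reduces to radial non-increasing data; the decay bound $u(r)\le 1/(\sqrt{\pi}\,r)$ gives a splitting radius $r_0=r_0(\alpha)$ independent of $u$, and the outer region is controlled by $e^s-1\lesssim s$ for $s\le \tfrac12$; in the Moser reduction the Dirichlet integral transforms to $\int_0^\infty \dot w^2\,dt\le 1$ and the inner integral to $\pi r_0^2\int_0^\infty (e^{(\alpha/4\pi)w^2}-1)e^{-t}\,dt$, and your handling of the nonzero boundary value $w(0)\le 2/r_0$ is exactly right: the Cauchy--Schwarz bound $w(t)\le w(0)+\sqrt{t}$ leaves a deficit $(\alpha/4\pi-1)t$ that is strictly negative only because $\alpha<4\pi$, which absorbs the $C\sqrt{t}$ correction and produces a constant depending only on $\alpha$. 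The sharpness computation with the Moser sequence is also correct: $\|\nabla u_n\|_{L^2}=1$, $\|u_n\|_{L^2}^2=O(1/\log n)$, and the core contribution $\pi n^{-2}(n^2-1)\to\pi$ force the ratio to diverge like $\log n$ at $\alpha=4\pi$, and monotonicity in $\alpha$ handles $\alpha>4\pi$. The only points glossed over are routine: absolute continuity of the radial representative needed for the one-dimensional reduction, and the layer-cake justification that rearrangement preserves $\int(e^{\alpha|u|^2}-1)\,dx$; both are standard.
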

	\begin{remark}
		\label{rem1} We point out that $\alpha=4\pi$ becomes admissible in \eqref{MT1} if we require $\|u\|_{H^1}\leq 1$ rather than
		$\|\nabla u\|_{L^2}\leq 1$. More precisely, we have
		\begin{equation}
		\label{MT2}
		\sup_{\|u\|_{H^1} \leq 1} \int_{\R^2} \left(e^{4\pi |u|^2}-1\right) dx=:\kappa<\infty,
		\end{equation}
		and this is	false for $\alpha>4\pi$ (see \cite{Ru} for more details). Here
		\[
		\|u\|_{H^1}^2=\|u\|_{L^2}^2+\|\nabla u\|_{L^2}^2.
		\]
	\end{remark}
	
	\begin{proposition} \label{prop-refi-MT}
		For all $u \in H^1$ with $\|\nabla u\|_{L^2} <1$, it holds that
		\begin{align} \label{refi-MT}
		\int_{\R^2} \left( e^{4\pi |u|^2} -1 \right) dx \leq \kappa \frac{\|u\|^2_{L^2}}{1-\|\nabla u\|^2_{L^2}},
		\end{align}
		where $\kappa$ is as in \eqref{MT2}.
	\end{proposition}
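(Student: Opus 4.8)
The plan is to deduce the refined inequality \eqref{refi-MT} from the sharp critical Moser-Trudinger inequality \eqref{MT2} by a scaling argument that normalizes the $H^1$-norm. The underlying observation is that in dimension two the Dirichlet energy $\|\nabla u\|_{L^2}^2$ is invariant under spatial dilations, while the $L^2$-mass and the exponential integral both scale like the inverse Jacobian; this mismatch is exactly what converts the rigid constraint $\|u\|_{H^1} \le 1$ of \eqref{MT2} into the flexible quotient on the right-hand side of \eqref{refi-MT}.

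First I would dispose of the trivial case $u=0$, for which both sides vanish, and henceforth assume $u \ne 0$, so that $b := \|u\|_{L^2}^2 > 0$ and $a := \|\nabla u\|_{L^2}^2 \in [0,1)$. For a dilation parameter $\mu > 0$ to be chosen, set $v(x) := u(\mu x)$. A change of variables gives the three scaling identities
\begin{align*}
\|\nabla v\|_{L^2}^2 &= \|\nabla u\|_{L^2}^2, \\
\|v\|_{L^2}^2 &= \mu^{-2} \|u\|_{L^2}^2, \\
\int_{\R^2} \left( e^{4\pi |v|^2} - 1 \right) dx &= \mu^{-2} \int_{\R^2} \left( e^{4\pi |u|^2} - 1 \right) dx,
\end{align*}
where the first uses the two-dimensional scale invariance of the gradient norm and the last uses that the pointwise amplitude of $v$ equals that of $u$, so the integrand is merely transported.

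Next I would choose $\mu$ so that $v$ sits exactly on the admissible boundary of \eqref{MT2}, namely $\mu^2 := \frac{b}{1-a} = \frac{\|u\|_{L^2}^2}{1 - \|\nabla u\|_{L^2}^2}$. With this choice the first two identities yield $\|v\|_{H^1}^2 = a + \mu^{-2} b = a + (1-a) = 1$, so $v$ is admissible and \eqref{MT2} applies, giving $\int_{\R^2} (e^{4\pi |v|^2} - 1)\,dx \le \kappa$. Substituting the third scaling identity and multiplying through by $\mu^2$ turns this into $\int_{\R^2} (e^{4\pi |u|^2} - 1)\,dx \le \kappa \mu^2 = \kappa \frac{\|u\|_{L^2}^2}{1 - \|\nabla u\|_{L^2}^2}$, which is precisely \eqref{refi-MT}.

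There is essentially no hard step here: the whole argument reduces to selecting the dilation that saturates the $H^1$-constraint. The only points demanding a little care are verifying the scale invariance of $\|\nabla u\|_{L^2}$ in two dimensions (the factor $\mu^2$ coming from $|\nabla v(x)|^2 = \mu^2 |(\nabla u)(\mu x)|^2$ cancels exactly the Jacobian $\mu^{-2}$) and checking that $\mu$ is well defined and strictly positive, which is guaranteed by $u \ne 0$ together with $\|\nabla u\|_{L^2} < 1$.
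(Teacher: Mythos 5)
Your proposal is correct and is essentially identical to the paper's own proof: the same dilation $u(\lambda x)$ with $\lambda^2 = \|u\|_{L^2}^2/(1-\|\nabla u\|_{L^2}^2)$ chosen to normalize the $H^1$-norm to $1$, followed by applying \eqref{MT2} and undoing the scaling. The only (harmless) addition is your explicit treatment of the trivial case $u=0$, which the paper leaves implicit.
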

	
	\begin{proof}
		We fix $u\in H^1$ satisfying $\|\nabla u\|_2<1$ and define \footnote{We are grateful to Z. Guo for bringing our attention to the scaling argument in the proof.} $u^\lambda(x)=u(\lambda x)$ for some positive $\lambda$ to be chosen later. It follows that
		\[
		\|u^\lambda\|_{H^1}^2=\lambda^{-2}\|u\|_{L^2}^2+\|\nabla u\|_{L^2}^2.
		\]
		Choosing $\lambda$ such that $\|u^\lambda\|_{H^1}^2=1$, or equivalently
		\[
		\lambda^2=\frac{\|u\|_{L^2}^2}{1-\|\nabla u\|_{L^2}^2},
		\]
		it yields
		\[
		\int_{\R^2}\left(e^{4\pi|u^\lambda|^2}-1\right)dx\leq \kappa.
		\]
		Since
		\[
		\int_{\R^2}\left(e^{4\pi|u^\lambda|^2}-1\right)dx=\lambda^{-2}\int_{\R^2}\left(e^{4\pi|u|^2}-1\right)dx,
		\]
		we obtain \eqref{refi-MT} as desired.
	\end{proof}
	
	Thanks to \eqref{refi-MT}, we have the following global existence for \eqref{NLS}.
	
	\begin{lemma} \label{lem-glo-exi}
		Let $\mu \in \{0,1\}$. Let $u_0 \in H^1$ be such that $\|\nabla u_0\|_{L^2}<1$ and 
		\begin{align} \label{cond-global}
		\frac{\|u_0\|_{L^2}}{1-\|\nabla u_0\|^2_{L^2}} < \sqrt{\frac{\pi}{\kappa}},
		\end{align}
		where $\kappa$ is as in \eqref{MT2}. Then the corresponding solution to \eqref{NLS} exists globally in time.
	\end{lemma}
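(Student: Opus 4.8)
The plan is to use the conservation of mass and energy together with the refined Moser--Trudinger inequality \eqref{refi-MT} to show that $\|\nabla u(t)\|_{L^2}$ stays bounded strictly below $1$, after which the blow-up alternative \eqref{blow-alt} forces global existence. Let $u\in C([0,T^*),H^1)$ be the maximal solution from Theorem \ref{theo-lwp}, so that $\|\nabla u(t)\|_{L^2}<1$ on $[0,T^*)$, and set $y(t):=\|\nabla u(t)\|_{L^2}^2\in[0,1)$ and $m:=\|u_0\|_{L^2}^2$. By the time-reversal symmetry of \eqref{NLS} the same reasoning applies on the backward maximal interval, so it suffices to treat $[0,T^*)$.

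First I would record the elementary two-sided bound
\[
0\le F_\mu(u)\le \frac{1}{8\pi}\left(e^{4\pi|u|^2}-1\right),\qquad \mu\in\{0,1\}.
\]
This follows from the Taylor expansion $e^{4\pi|u|^2}-1=\sum_{k\ge1}\frac{(4\pi|u|^2)^k}{k!}$: the terms subtracted in the definition of $F_\mu$ are precisely the first two (for $\mu=0$) or three (for $\mu=1$, using $8\pi^2=\tfrac12(4\pi)^2$) terms of this nonnegative series, which yields nonnegativity of $F_\mu$, while discarding the nonpositive terms $-4\pi|u|^2-8\pi^2\mu|u|^4$ yields the upper bound. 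Feeding this into \eqref{refi-MT} and using mass conservation $\|u(t)\|_{L^2}^2=m$ gives $\int F_\mu(u(t))\,dx\le \frac{\kappa}{8\pi}\frac{m}{1-y(t)}$, and then the energy identity $\tfrac12 y(t)=E_\mu(u_0)+\int F_\mu(u(t))\,dx$ produces the master inequality
\[
g(y(t))\le E_\mu(u_0)\quad\text{for all }t\in[0,T^*),\qquad\text{where } g(s):=\tfrac12 s-\frac{\kappa m}{8\pi(1-s)}.
\]

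Next I would study $g$ on $[0,1)$. It is strictly concave, tends to $-\infty$ as $s\to1^-$, and is increasing on $[0,s_\ast]$ and decreasing on $[s_\ast,1)$, with maximizer $s_\ast=1-\tfrac12\|u_0\|_{L^2}\sqrt{\kappa/\pi}$; a direct computation gives $g(s_\ast)=\tfrac12-\tfrac12\|u_0\|_{L^2}\sqrt{\kappa/\pi}$. The crucial observation is that hypothesis \eqref{cond-global} is exactly equivalent to $\tfrac12\|\nabla u_0\|_{L^2}^2<g(s_\ast)$, and since $F_\mu\ge0$ forces $E_\mu(u_0)\le\tfrac12\|\nabla u_0\|_{L^2}^2$, we obtain both $E_\mu(u_0)<g(s_\ast)$ and $y(0)=\|\nabla u_0\|_{L^2}^2<s_\ast$. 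Consequently the sublevel set $\{s\in[0,1):g(s)\le E_\mu(u_0)\}$ consists of two disjoint closed pieces $[0,s_1]\cup[s_2,1)$ with $s_1<s_\ast<s_2$, separated by the open gap $(s_1,s_2)$ on which $g>E_\mu(u_0)$; moreover the $t=0$ case of the master inequality together with $y(0)<s_\ast$ places $y(0)$ in the lower piece, i.e. $y(0)\le s_1$.

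Finally I would close by continuity: $t\mapsto y(t)$ is continuous and, by the master inequality, confined to $[0,s_1]\cup[s_2,1)$; since a continuous path starting at $y(0)\le s_1$ cannot cross the gap $(s_1,s_2)$, it must remain in $[0,s_1]$ for every $t\in[0,T^*)$. Hence $\sup_{t\in[0,T^*)}\|\nabla u(t)\|_{L^2}^2\le s_1<1$, which is incompatible with \eqref{blow-alt} unless $T^*=+\infty$, giving global existence. I expect the only delicate point to be the bookkeeping in the third step: checking that \eqref{cond-global} coincides exactly with $E_\mu(u_0)<g(s_\ast)$ and that $y(0)$ falls in the component $[0,s_1]$ rather than $[s_2,1)$; the rest is the standard energy-trapping/continuity mechanism.
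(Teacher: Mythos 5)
Your proof is correct, and it runs on the same fuel as the paper's: mass/energy conservation, the refined Moser--Trudinger bound \eqref{refi-MT}, the blow-up alternative \eqref{blow-alt}, and a continuity argument to close. The packaging, however, is different. The paper never analyzes a function like your $g$: it fixes the single threshold $\tfrac12\left(\|\nabla u_0\|_{L^2}^2+1\right)$, lets $T_1$ be the first time $\|\nabla u(t)\|_{L^2}^2$ reaches that threshold, applies \eqref{refi-MT} \emph{only at that one instant} (where the gradient norm is below $1$ by construction), and checks that \eqref{cond-global} forces $\|\nabla u(T_1)\|^2_{L^2}<\tfrac12\left(\|\nabla u_0\|^2_{L^2}+1\right)$, a contradiction. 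You instead derive the master inequality $g(y(t))\le E_\mu(u_0)$ at all times and exploit the strict concavity of $g$: your computations of $s_\ast$, of $g(s_\ast)=\tfrac12-\tfrac12\|u_0\|_{L^2}\sqrt{\kappa/\pi}$, and of the equivalence of \eqref{cond-global} with $\tfrac12\|\nabla u_0\|_{L^2}^2<g(s_\ast)$ are all correct, and the two-component trapping argument is sound. Your version is more structural; it makes transparent exactly what \eqref{cond-global} means (the kinetic energy sits strictly below the maximum of $g$) and exhibits the sharp trapping level $s_1$. The paper's version is shorter and, importantly, needs \eqref{refi-MT} at only one time.

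That last point is the one soft spot in your write-up: you assert at the outset that the maximal solution satisfies $\|\nabla u(t)\|_{L^2}<1$ on all of $[0,T^*)$, which is what licenses applying \eqref{refi-MT}, hence the master inequality, at every $t$. Theorem \ref{theo-lwp} and \eqref{blow-alt} as stated do not literally give this strict bound on the whole maximal interval (the paper itself only ever invokes $\sup_{t<T^*}\|\nabla u(t)\|_{L^2}\le 1$, e.g. in the proof of Lemma \ref{lem-viri-est}, and \eqref{refi-MT} requires the \emph{strict} inequality). This is reparable with one sentence drawn from your own mechanism: if $t_\ast$ were the first time with $\|\nabla u(t_\ast)\|_{L^2}=1$, then on $[0,t_\ast)$ the master inequality holds and your trapping gives $y\le s_1<1$ there, so by continuity $y(t_\ast)\le s_1<1$, a contradiction; hence no such $t_\ast$ exists and the argument applies on all of $[0,T^*)$. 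This is precisely the subtlety the paper sidesteps by invoking \eqref{refi-MT} only at the crossing time $T_1$, where the gradient norm is controlled by construction.
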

	
	\begin{proof}
		Let $T^*$ be the maximal forward time of existence. If $T^*=+\infty$, we are done. If $T^*<+\infty$, then \eqref{blow-alt} holds. Set 
		\[
		T_1:= \sup \left\{ 0 \leq t <T^* \ : \ \sup_{\tau \in [0,t]} \|\nabla u(t)\|^2_{L^2} <\frac{1}{2} \left( \|\nabla u_0\|^2_{L^2} +1 \right) \right\}.
		\]
		Since $\frac{1}{2} \left( \|\nabla u_0\|^2_{L^2} +1 \right) <1$, we infer from \eqref{blow-alt} that $T_1<T^*$. By the continuity of $t \mapsto \|\nabla u(t)\|^2_{L^2}$, we must have that 
		\begin{align} \label{est-T1}
		\|\nabla u(T_1)\|^2_{L^2} =\frac{1}{2}  \left( \|\nabla u_0\|^2_{L^2} +1 \right) .
		\end{align}
		By the conservation of energy,
		\begin{align*}
		\frac{1}{2} \|\nabla u(T_1)\|^2_{L^2} &= E_\mu(u(T_1)) + \int F_\mu(u(T_1)) dx \\
		&= E_\mu(u_0) + \int F_\mu(u(T_1)) dx \\
		&\leq \frac{1}{2} \|\nabla u_0\|^2_{L^2} + \int F_\mu(u(T_1)) dx.
		\end{align*}
		By \eqref{refi-MT}, \eqref{est-T1}, the conservation of mass and \eqref{cond-global}, we have
		\begin{align*}
		\int F_\mu(u(T_1)) dx &\leq \frac{1}{8\pi} \int \left( e^{4\pi|u(T_1)|^2} -1 \right) dx \\
		&\leq \frac{1}{8\pi} \kappa \frac{\|u(T_1)\|^2_{L^2}}{1-\|\nabla u(T_1)\|^2_{L^2}} \\
		&= \frac{1}{8\pi} \kappa \frac{\|u_0\|^2_{L^2}}{1-\|\nabla u(T_1)\|^2_{L^2}} \\
		&= \frac{\kappa}{4\pi} \frac{\|u_0\|^2_{L^2}}{1-\|\nabla u_0\|^2_{L^2}} \\
		&<\frac{1}{4}\left(1-\|\nabla u_0\|^2_{L^2}\right).
		\end{align*}
		It follows that
		\[
		\frac{1}{2} \|\nabla u(T_1)\|^2_{L^2}<\frac{1}{2} \|\nabla u_0\|^2_{L^2} +\frac{1}{4}\left(1-\|\nabla u_0\|^2_{L^2}\right) = \frac{1}{4} \left( \|\nabla u_0\|^2_{L^2}+1\right)
		\]
		which contradicts \eqref{est-T1}. The proof is complete.
	\end{proof}

	We also have the following refined Moser-Trudinger type inequalities due to \cite{BIP}.
	\begin{lemma} 
		Let $\alpha \in [0,4\pi)$ and $p \in(2,\infty)$. Then there exists $C_{\alpha,p}>0$ such that
		\begin{align} \label{MT-p}
		\int_{\R^2} e^{\alpha |u|^2} |u|^p dx \leq C_{\alpha,p} \|u\|^p_{L^p}
		\end{align}
		for all $u \in H^1$ satisfying $\|\nabla u\|_{L^2} \leq 1$.
	\end{lemma}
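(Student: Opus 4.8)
The plan is to expand the exponential as a power series, isolate the leading term, and estimate the remaining moments by a Gagliardo–Nirenberg inequality whose sharp constant encodes the critical exponent $4\pi$. By a density argument I may assume $u$ to be smooth and compactly supported, so that all the manipulations below are legitimate. Writing $e^{\alpha|u|^2}=\sum_{k\ge0}\frac{\alpha^k}{k!}|u|^{2k}$ and integrating term by term,
\begin{align*}
\int_{\R^2} e^{\alpha|u|^2}|u|^p\,dx=\sum_{k=0}^\infty\frac{\alpha^k}{k!}\int_{\R^2}|u|^{2k+p}\,dx=\|u\|^p_{L^p}+\sum_{k=1}^\infty\frac{\alpha^k}{k!}\|u\|^{2k+p}_{L^{2k+p}},
\end{align*}
so the entire point is to dominate the tail sum by a constant multiple of $\|u\|^p_{L^p}$.

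For the individual moments I would invoke the two-dimensional Gagliardo–Nirenberg inequality in its form with $L^p$ as base space: for every $q\ge p$,
\begin{align*}
\|u\|_{L^q}\le C_q\,\|\nabla u\|^{1-p/q}_{L^2}\,\|u\|^{p/q}_{L^p},
\end{align*}
where the exponents are forced by scaling and the sharp constant satisfies $C_q\sim\sqrt{q/(8\pi e)}$ as $q\to\infty$. Taking $q=2k+p$, raising to the power $q$, and using the hypothesis $\|\nabla u\|_{L^2}\le1$ (so that $\|\nabla u\|^{q-p}_{L^2}\le1$) gives
\begin{align*}
\|u\|^{2k+p}_{L^{2k+p}}\le C_{2k+p}^{\,2k+p}\,\|u\|^p_{L^p}.
\end{align*}
It is crucial to retain the weight $|u|^p$ throughout: a cruder estimate in which the exponential is bounded directly through the Moser–Trudinger inequality \eqref{MT1} would produce $\|u\|^2_{L^2}$ on the right-hand side, and $\|u\|^2_{L^2}$ cannot be controlled by $\|u\|^p_{L^p}$ for $p>2$ (as amplitude scaling $u\mapsto cu$ shows at once). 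This is precisely why the series decomposition, rather than a splitting-plus-Hölder argument, is the natural route.

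Substituting the moment bound reduces everything to the convergence of $\sum_{k\ge1}\frac{\alpha^k}{k!}\,C_{2k+p}^{\,2k+p}$. Using $C_q^2\sim q/(8\pi e)$ together with Stirling's formula, the general term is comparable to $(\alpha/(4\pi))^k$ up to polynomial factors in $k$; indeed the critical product $2e\,C_0^2=1/(4\pi)$ with $C_0^2=1/(8\pi e)$ is exactly what makes the ratio $\alpha/(4\pi)$ appear. Hence the series converges precisely for $\alpha\in[0,4\pi)$, with sum a finite constant $C_{\alpha,p}$ depending only on $\alpha$ and $p$ and blowing up as $\alpha\uparrow4\pi$; this also matches the sharpness of the threshold, since at $\alpha=4\pi$ the Moser functions make \eqref{MT-p} fail. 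Collecting the leading term $\|u\|^p_{L^p}$ with the summed tail then yields \eqref{MT-p}. The main obstacle is the summability step, which rests entirely on the exact growth rate $C_q\sim\sqrt{q/(8\pi e)}$ of the sharp Gagliardo–Nirenberg constant; everything else is routine.
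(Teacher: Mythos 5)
First, a point of comparison: the paper does not prove this lemma at all --- it is quoted as a known result of Bahouri--Ibrahim--Perelman \cite{BIP}, so your argument has to stand entirely on its own. Structurally, what you propose is the classical series route (expand $e^{\alpha|u|^2}$, estimate each moment by Gagliardo--Nirenberg, sum via Stirling), and several pieces of it are correct and well observed: the term-by-term integration is legitimate by monotone convergence, the exponents in $\|u\|_{L^q}\le C_{q,p}\|\nabla u\|_{L^2}^{1-p/q}\|u\|_{L^p}^{p/q}$ are the right scale-invariant ones, the Stirling computation showing that $C_{q,p}^2\sim q/(8\pi e)$ forces convergence exactly for $\alpha<4\pi$ is accurate, and your remark that the $L^2$-based inequality \eqref{MT1} is useless here (since $\|u\|_{L^2}^2$ cannot be dominated by $\|u\|_{L^p}^p$ for $p>2$) identifies the real difficulty.

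The genuine gap is the linchpin you assert without proof or reference: that the sharp constant of the Gagliardo--Nirenberg inequality with \emph{fixed base} $L^p$, $p>2$, satisfies $C_{q,p}\sim\sqrt{q/(8\pi e)}$ as $q\to\infty$. For $p=2$ this asymptotic is indeed known, but it is itself normally \emph{derived from} the Adachi--Tanaka inequality \eqref{MT1} (or from Weinstein's ground-state characterization combined with nontrivial large-exponent asymptotics of ground states); for fixed $p>2$ there is no standard result to quote, and in fact the claim is equivalent to the lemma you are trying to prove. Indeed, applying \eqref{MT-p} to $u/\|\nabla u\|_{L^2}$ and retaining the $k$-th term of the series gives $\|u\|_{L^{2k+p}}^{2k+p}\le \frac{k!}{\alpha^k}\,C_{\alpha,p}\,\|\nabla u\|_{L^2}^{2k}\|u\|_{L^p}^{p}$, which by Stirling is exactly the bound $C_{q,p}\lesssim\sqrt{q/(2e\alpha)}$ with $q=2k+p$; letting $\alpha\uparrow 4\pi$ recovers your asserted asymptotics, while your computation goes in the converse direction. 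So as written the proof is circular: it trades \eqref{MT-p} for an equally deep, unproven statement. Note also that with the only growth rate that is standard and citable, namely $C_{q,p}\le C\sqrt{q}$ with a non-sharp constant $C$, your series converges only for $\alpha<1/(2eC^2)$, i.e. the argument yields \eqref{MT-p} for \emph{small} $\alpha$ but not on the full range $[0,4\pi)$. To close the gap you would have to actually establish $\limsup_{q\to\infty}C_{q,p}^{2}/q\le 1/(8\pi e)$, and the natural way to do that is the rearrangement/Moser-type analysis of radial decreasing functions on which the proofs in \cite{AT} and \cite{BIP} are based --- at which point one may as well prove \eqref{MT-p} directly by that method.
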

	
	\begin{corollary}
		Let $\alpha \in [0,4\pi)$. Then there exists $C_\alpha >0$ such that
		\begin{align} \label{MT-4}
		\int_{\R^2} \left(e^{\alpha |u|^2} -1 -\alpha |u|^2\right) dx \leq C_\alpha\|u\|^4_{L^4}
		\end{align}
		for all $u \in H^1$ satisfying $\|\nabla u\|_{L^2} \leq 1$.
	\end{corollary}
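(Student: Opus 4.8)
The plan is to reduce \eqref{MT-4} to the Moser--Trudinger type bound \eqref{MT-p} with exponent $p=4$ by first establishing a clean pointwise inequality. Writing $s = |u(x)|^2 \geq 0$, I would compare the two power series
\[
e^{\alpha s} - 1 - \alpha s = \sum_{k\geq 2} \frac{(\alpha s)^k}{k!}, \qquad \frac{\alpha^2}{2} s^2 e^{\alpha s} = \sum_{k \geq 2} \frac{\alpha^k s^k}{2(k-2)!}.
\]
Matching the coefficients of $s^k$, the desired term-by-term bound $\frac{\alpha^k}{k!} \leq \frac{\alpha^k}{2(k-2)!}$ is equivalent to $2 \leq k(k-1)$, which holds for every $k \geq 2$. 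This yields the pointwise estimate
\[
e^{\alpha |u|^2} - 1 - \alpha |u|^2 \leq \frac{\alpha^2}{2} |u|^4 e^{\alpha |u|^2}
\]
valid at every $x \in \R^2$.

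Integrating this pointwise bound over $\R^2$ and invoking \eqref{MT-p} with $p = 4$ (which applies since $\|\nabla u\|_{L^2}\leq 1$ and $4 \in (2,\infty)$) would give
\[
\int_{\R^2} \left(e^{\alpha |u|^2} - 1 - \alpha |u|^2\right) dx \leq \frac{\alpha^2}{2} \int_{\R^2} e^{\alpha|u|^2} |u|^4 dx \leq \frac{\alpha^2}{2} C_{\alpha,4} \|u\|^4_{L^4},
\]
so that \eqref{MT-4} holds with $C_\alpha := \frac{\alpha^2}{2} C_{\alpha,4}$, completing the argument.

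There is no genuine obstacle here: the substance of the corollary is entirely carried by the lemma \eqref{MT-p}, and the only point requiring a little care is the pointwise comparison. Should one prefer to avoid the series manipulation, the same inequality can be obtained analytically by checking that $g(s) := \frac{\alpha^2}{2} s^2 e^{\alpha s} - (e^{\alpha s} - 1 - \alpha s)$ satisfies $g(0)=0$ and $g'(s)\geq 0$ for all $s\geq 0$; a short computation shows $g'(s) = \alpha\,e^{\alpha s}\bigl(\alpha s + \tfrac{1}{2}\alpha^2 s^2\bigr) - \alpha\,(e^{\alpha s}-1)$, whose nonnegativity for $s\geq0$ follows by monotonicity from the vanishing at $s=0$. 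Either route is elementary, so I expect the proof to be short.
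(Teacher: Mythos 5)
Your proof is correct and follows exactly the route the paper intends: the corollary is stated without proof as an immediate consequence of the lemma giving \eqref{MT-p}, and your argument supplies precisely that deduction, namely the elementary pointwise bound $e^{\alpha s}-1-\alpha s \leq \tfrac{\alpha^2}{2}s^2 e^{\alpha s}$ (valid term by term in the series since $2\leq k(k-1)$ for $k\geq 2$) followed by an application of \eqref{MT-p} with $p=4$. Both your series comparison and the alternative monotonicity check of $g$ are valid, so nothing is missing.
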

	
	\begin{corollary} \label{coro-refi-MT}
		Let $\beta \in (0,1)$ and $p \in (2,\infty)$. Then there exists $C_{\beta,p}>0$ such that 
		\begin{align} \label{MT-beta}
		\int_{\R^2} e^{4\pi |u|^2} |u|^p dx \leq C_{\beta,p} \|u\|^p_{L^p}
		\end{align}
		for all $u \in H^1$ satisfying $\|\nabla u\|_{L^2} \leq \beta$.
	\end{corollary}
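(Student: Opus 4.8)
The plan is to deduce \eqref{MT-beta} from the preceding Lemma \eqref{MT-p} by means of a simple amplitude rescaling that trades the stronger gradient constraint $\|\nabla u\|_{L^2}\le\beta$ for a strictly subcritical exponent. The point is that \eqref{MT-p} already supplies, for every $\alpha\in[0,4\pi)$, the bound $\int_{\R^2} e^{\alpha|v|^2}|v|^p\,dx\le C_{\alpha,p}\|v\|_{L^p}^p$ whenever $\|\nabla v\|_{L^2}\le 1$. So the whole task is to manufacture, out of a given $u$ with $\|\nabla u\|_{L^2}\le\beta$, an admissible test function $v$ for \eqref{MT-p} in such a way that the exponent on the left is promoted to exactly $4\pi$.

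First I would set $\alpha:=4\pi\beta^2$, which lies in $[0,4\pi)$ precisely because $\beta\in(0,1)$, and introduce the rescaled function $v:=\beta^{-1}u\in H^1$. Then $\|\nabla v\|_{L^2}=\beta^{-1}\|\nabla u\|_{L^2}\le 1$, so $v$ is an admissible test function for \eqref{MT-p}. The exponent has been chosen so that $\alpha|v|^2=4\pi\beta^2\cdot\beta^{-2}|u|^2=4\pi|u|^2$; applying \eqref{MT-p} to $v$ with this $\alpha$ thus yields
\[
\int_{\R^2} e^{4\pi|u|^2}\,\beta^{-p}|u|^p\,dx \;\le\; C_{\alpha,p}\,\beta^{-p}\|u\|_{L^p}^p .
\]
Multiplying through by $\beta^{p}$ removes the scaling factors and gives \eqref{MT-beta} with the explicit constant $C_{\beta,p}:=C_{4\pi\beta^2,p}$.

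There is no genuine obstacle here; the only point requiring care is the exponent bookkeeping, and this is exactly where the hypothesis $\beta<1$ is used. The strict inequality guarantees $\alpha=4\pi\beta^2<4\pi$, so one may invoke the subcritical Lemma \eqref{MT-p} rather than the borderline Moser--Trudinger case, which is why the constant $C_{\beta,p}$ is finite (and in general blows up as $\beta\to 1^-$).
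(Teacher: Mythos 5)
Your proof is correct and is essentially identical to the paper's: both rescale $u$ to $u/\beta$, apply the subcritical estimate \eqref{MT-p} with $\alpha=4\pi\beta^2<4\pi$, and undo the scaling, yielding the constant $C_{4\pi\beta^2,p}$.
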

	
	\begin{proof}
		Set $u_\beta= \frac{u}{\beta}$. Applying \eqref{MT-p} to $u_\beta$ and $\alpha= 4\pi \beta^2<4\pi$, we see that
		\[
		\int_{\R^2} e^{4\pi |u|^2} |u|^p dx  = \int_{\R^2} e^{4\pi \beta^2 |u_\beta|^2} \beta^p |u_\beta|^p dx \leq \beta^p C_{\beta,p} \|u_\beta\|^p_{L^p} = C_{\beta,p} \|u\|^p_{L^p}.
		\]
	\end{proof}
	
	\begin{corollary}
		Let $\beta \in (0,1)$ and $p \in (2,\infty)$. Then there exists $C_{\beta,p}>0$ such that 
		\begin{align} \label{MT-nu}
		\int_{\R^2} e^{4\pi(1+\nu) |u|^2} |u|^p dx \leq C_{\beta,p} \|u\|^p_{L^p}
		\end{align}
		for all $u\in H^1$ satisfying $\|\nabla u\|_{L^2} \leq \beta$ and $0<\nu<\frac{1}{\beta^2}-1$.
	\end{corollary}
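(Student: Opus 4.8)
The plan is to reproduce, almost verbatim, the amplitude-scaling argument used in the proof of Corollary~\ref{coro-refi-MT}, the only new point being that the constraint $0<\nu<\frac{1}{\beta^2}-1$ is precisely what is needed to keep the effective exponent strictly below the critical value $4\pi$ after rescaling the gradient to the unit ball. Indeed, the hypothesis on $\nu$ is equivalent to $(1+\nu)\beta^2<1$, so that $4\pi(1+\nu)\beta^2<4\pi$, which places us exactly in the subcritical range where~\eqref{MT-p} is applicable.

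Concretely, I would first set $u_\beta:=u/\beta$, so that $u=\beta u_\beta$ pointwise and hence $|u|^2=\beta^2|u_\beta|^2$ and $|u|^p=\beta^p|u_\beta|^p$, while $\|\nabla u_\beta\|_{L^2}=\beta^{-1}\|\nabla u\|_{L^2}\leq 1$; thus $u_\beta$ is an admissible test function for~\eqref{MT-p}. Next I would put $\alpha:=4\pi(1+\nu)\beta^2$, which by the discussion above satisfies $\alpha\in[0,4\pi)$, and apply~\eqref{MT-p} to $u_\beta$ with this value of $\alpha$. Rewriting everything back in terms of $u$ and using $\|u_\beta\|_{L^p}^p=\beta^{-p}\|u\|_{L^p}^p$ then gives
\[
\int_{\R^2} e^{4\pi(1+\nu)|u|^2}|u|^p\,dx=\beta^p\int_{\R^2} e^{\alpha|u_\beta|^2}|u_\beta|^p\,dx\leq \beta^p C_{\alpha,p}\|u_\beta\|_{L^p}^p=C_{\alpha,p}\|u\|_{L^p}^p,
\]
and setting $C_{\beta,p}:=C_{\alpha,p}$ finishes the proof.

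I do not expect any genuine obstacle here: the whole content is the bookkeeping of the scaling exponents together with the single elementary inequality $(1+\nu)\beta^2<1$. The one point worth a word of care is that the constant $C_{\alpha,p}$ furnished by~\eqref{MT-p} depends on $\alpha=4\pi(1+\nu)\beta^2$ and hence, strictly speaking, on $\nu$; in particular it may degenerate as $\nu\uparrow\frac{1}{\beta^2}-1$, i.e.\ as $\alpha\uparrow 4\pi$. The statement should therefore be read with $\nu$ regarded as a fixed parameter in the admissible range (so that $\alpha$ is a fixed subcritical exponent), the resulting $C_{\beta,p}$ being allowed to depend on $\nu$ as well; with that understood the argument is immediate.
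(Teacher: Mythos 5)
Your proof is correct and is essentially identical to the paper's own argument: the same rescaling $u_\beta = u/\beta$, the same choice $\alpha = 4\pi(1+\nu)\beta^2 < 4\pi$, and the same application of \eqref{MT-p}. Your closing caveat about the constant depending on $\nu$ (through $\alpha$) and degenerating as $\nu \uparrow \frac{1}{\beta^2}-1$ is a legitimate refinement of the statement that the paper glosses over by writing $C_{\beta,p}$.
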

	
	\begin{proof}
		Set $u_\beta:= \frac{u}{\beta}$. Applying \eqref{MT-p} to $u_\beta$ and $\alpha=4\pi(1+\nu) \beta^2 <4\pi$, we see that
		\[
		\int_{\R^2} e^{4\pi(1+\nu) |u|^2} |u|^p dx = \int_{\R^2} e^{4\pi(1+\nu) \beta^2 |u_\beta|^2} \beta^p |u_\beta|^p dx \leq \beta^p C_{\beta,p} \|u_\beta\|^p_{L^p} = C_{\beta,p} \|u\|^p_{L^p}.
		\]
	\end{proof}
	
	It is well-known that $H^1(\R^2)$ embeds continuously into $L^p(\R^2)$ for any $p \in (2,\infty)$ but not in $L^\infty(\R^2)$. However, one can estimate $L^\infty$-norm of functions in $H^1$ by a stronger norm but with a weaker growth. More precisely, we have the following logarithmic estimate due to \cite{IMM}.
	\begin{lemma} \label{lem-log}
		Let $0<\beta<1$. For any $\lambda >\frac{1}{2\pi \beta}$ and any $0<\omega \leq 1$, a constant $C_\lambda >0$ exists such that, for any function $u \in H^1 \cap \mathcal{C}^\beta$, 
		\begin{align} \label{log}
		\|u\|^2_{L^\infty} \leq \lambda \|u\|^2_{H_\omega} \log \left( C_\lambda + \frac{8^\beta \omega^{-\beta} \|u\|_{\mathcal{C}^\beta}}{\|u\|_{H_\omega}} \right),
		\end{align}
		where
		\[
		\|u\|^2_{H_\omega}:= \|\nabla u\|^2_{L^2} + \omega^2 \|u\|^2_{L^2}.
		\]
	\end{lemma}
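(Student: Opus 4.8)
The plan is to establish \eqref{log} by a frequency decomposition of $u$ into a low part $u_N$ (Fourier support in $\{|\xi|\le N\}$) and a high part $u-u_N$, with $N>0$ to be optimized at the end, while tracking all constants so as to recover the sharp coefficient $\tfrac{1}{2\pi\beta}$. I normalize the Fourier transform so that $\|u\|_{H_\omega}^2=\tfrac{1}{(2\pi)^2}\int_{\R^2}(|\xi|^2+\omega^2)|\hat u(\xi)|^2\,d\xi$ and $|u(x)|\le \tfrac{1}{(2\pi)^2}\int|\hat u|\,d\xi$; by homogeneity I may assume $\|u\|_{H_\omega}=1$. A preliminary case distinction handles the regime where the ratio $M:=\omega^{-\beta}\|u\|_{\mathcal{C}^\beta}/\|u\|_{H_\omega}$ is bounded: there $\|u\|_{L^\infty}\le\|u\|_{\mathcal{C}^\beta}\le M\,\omega^\beta\|u\|_{H_\omega}\le M\,\|u\|_{H_\omega}$ (using $\omega\le1$), so \eqref{log} holds trivially once $C_\lambda$ is taken large. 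The substance is the regime $M$ large.

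For the low frequencies I would apply Fourier inversion and Cauchy--Schwarz against the weight $(|\xi|^2+\omega^2)^{-1}$, then evaluate the resulting planar integral explicitly:
\[
\|u_N\|_{L^\infty}^2\le \frac{1}{4\pi^2}\Big(\int_{|\xi|\le N}\frac{d\xi}{|\xi|^2+\omega^2}\Big)\|u\|_{H_\omega}^2=\frac{1}{4\pi}\,\log\Big(1+\frac{N^2}{\omega^2}\Big)\,\|u\|_{H_\omega}^2 ,
\]
using $\int_{|\xi|\le N}(|\xi|^2+\omega^2)^{-1}\,d\xi=\pi\log(1+N^2/\omega^2)$. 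This is precisely the term that generates the logarithm with the correct constant $\tfrac{1}{4\pi}$; it is essential to use the sharp cutoff (not a smooth one) here so that the integral, and hence the constant, is exact.

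For the high-frequency remainder I would control $\|u-u_N\|_{L^\infty}$ by the Hölder seminorm. Writing $u-u_N$ as the convolution of $u$ against a kernel of integral one and inserting the difference $u(x-y)-u(x)$, the bound $|u(x-y)-u(x)|\le [u]_{\mathcal{C}^\beta}|y|^\beta$ together with scaling gives $\|u-u_N\|_{L^\infty}\le C\,N^{-\beta}\|u\|_{\mathcal{C}^\beta}$; equivalently one sums the Bernstein estimates $\|P_j u\|_{L^\infty}\lesssim 2^{-j\beta}\|u\|_{\mathcal{C}^\beta}$ over $2^j\gtrsim N$. The explicit value of this constant is what produces the factor $8^\beta$ in \eqref{log}. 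I then choose $N$ by $N^\beta=\|u\|_{\mathcal{C}^\beta}/\|u\|_{H_\omega}$, equivalently $(N/\omega)^\beta=M$, so that the high-frequency term is $O(\|u\|_{H_\omega})$ while $\log(1+N^2/\omega^2)=\log(1+M^{2/\beta})\le \tfrac{2}{\beta}\log(C_\lambda+M)$ in the large-$M$ regime. Expanding $\|u\|_{L^\infty}^2\le(\|u_N\|_{L^\infty}+\|u-u_N\|_{L^\infty})^2$, the leading term becomes $\tfrac{1}{2\pi\beta}\,\|u\|_{H_\omega}^2\log(C_\lambda+M)$, and the cross term and the squared high-frequency term are $O(\|u\|_{H_\omega}^2\sqrt{\log(C_\lambda+M)})$ and $O(\|u\|_{H_\omega}^2)$; since $\lambda>\tfrac{1}{2\pi\beta}$ strictly, these lower-order contributions are absorbed by enlarging $C_\lambda$, which yields \eqref{log}.

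The main obstacle I anticipate is the bookkeeping of constants needed to land exactly on the coefficient $\tfrac{1}{2\pi\beta}$ rather than merely $O(1/\beta)$: this forces the sharp cutoff in the low-frequency Cauchy--Schwarz and the exact planar integral above. A secondary technical point is that the sharp-cutoff kernel is only borderline integrable against $|y|^\beta$ in two dimensions for $\beta$ close to $1$, so for the full range $\beta\in(0,1)$ it is safer to carry out the high-frequency estimate through a smooth Littlewood--Paley decomposition (summing the Bernstein bounds), keeping the harmless constant $8^\beta$, while still using the sharp cutoff only in the low-frequency part. The strict gap between $\lambda$ and $\tfrac{1}{2\pi\beta}$ is exactly the slack that lets all cross terms and the additive constant $C_\lambda$ inside the logarithm be absorbed.
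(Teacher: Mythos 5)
The paper gives no proof of this lemma at all: it is quoted directly from Ibrahim--Majdoub--Masmoudi \cite{IMM}, so there is no internal argument to compare against, and your self-contained proof is necessarily a ``different route.'' It is the classical Brezis--Gallou\"et frequency-splitting scheme with constants tracked, and its core is correct: the low-frequency Cauchy--Schwarz step
\[
\|u_N\|_{L^\infty}^2 \le \frac{1}{4\pi^2}\left(\int_{|\xi|\le N}\frac{d\xi}{|\xi|^2+\omega^2}\right)\|u\|_{H_\omega}^2=\frac{1}{4\pi}\log\left(1+\frac{N^2}{\omega^2}\right)\|u\|_{H_\omega}^2
\]
is computed with the right constants, the choice $(N/\omega)^\beta=M$ turns this into $\tfrac{1}{2\pi\beta}\log(C_\lambda+M)+O(1)$ times $\|u\|^2_{H_\omega}$, and the strict gap $\lambda>\tfrac{1}{2\pi\beta}$ does absorb the cross term $O\bigl(\sqrt{\log(C_\lambda+M)}\bigr)$, the $O(1)$ terms, and the bounded-$M$ case, exactly as you argue. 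You are also right that the factor $8^\beta$ need not be reproduced: since $8^\beta\ge 1$ it only enlarges the argument of the logarithm, so any bound with $M$ in its place suffices.

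The one soft spot is the hybrid decomposition, which is internally inconsistent as stated: if $u_N$ is the sharp cutoff, the remainder $u-u_N$ is \emph{not} $\sum_{2^j\gtrsim N}P_ju$, and the discrepancy involves the sharp ball multiplier acting on an annular piece; in two dimensions its kernel decays only like $|y|^{-3/2}$, hence is not in $L^1$, so one cannot pass between the sharp remainder and the Littlewood--Paley tail in $L^\infty$ for free. (The hybrid can be repaired: the discrepancy has frequency support in an annulus $N<|\xi|\lesssim N$, and estimating it by Cauchy--Schwarz against the weight $(|\xi|^2+\omega^2)^{-1}$ costs only $O(\|u\|_{H_\omega})$.) More simply, your premise that the sharp cutoff is ``essential'' in the low-frequency step is mistaken: a smooth multiplier $\varphi(\xi/N)$ with $0\le\varphi\le1$, $\varphi\equiv 1$ on $B(0,1)$ and $\supp\varphi\subset B(0,2)$ gives
\[
\|S_Nu\|_{L^\infty}^2\le\frac{1}{4\pi}\log\left(1+\frac{4N^2}{\omega^2}\right)\|u\|_{H_\omega}^2\le\left[\frac{\log 4}{4\pi}+\frac{1}{4\pi}\log\left(1+\frac{N^2}{\omega^2}\right)\right]\|u\|_{H_\omega}^2,
\]
i.e.\ the same leading constant up to an additive term that your slack already absorbs, while the complementary piece $u-S_Nu$ is a genuine convolution with a Schwartz kernel of integral one, so $\|u-S_Nu\|_{L^\infty}\le C N^{-\beta}\|u\|_{\mathcal{C}^\beta}$ holds for all $\beta\in(0,1)$ with no borderline issues. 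With the smooth cutoff used throughout, your argument closes without further changes.
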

	We also recall the following Sobolev embeddings which are needed in the paper (see e.g. \cite{BL}).
	
	\begin{proposition} \label{SobEmb}
		\begin{itemize}
			\item Let $1< p < q<\infty$ and $0< \gamma <\frac{2}{p}$ be such that $\frac{1}{q}=\frac{1}{p}-\frac{\gamma}{2}$. Then $\dot{W}^{\gamma,p}(\R^2) \hookrightarrow L^q(\R^2)$. In particular,
			\begin{equation*}
			\|u\|_{L^q}\lesssim \| |\nabla|^\gamma u\|_{L^p}.
			\end{equation*}
			\item Let $p>2$. Then $W^{1,p}(\R^2)\hookrightarrow \mathcal{C}^{1-\frac{2}{p}}(\R^2)$. 
			In particular
			\begin{align} \label{sobo-holder}
			W^{1,4}(\R^2) \hookrightarrow \mathcal{C}^{1/2}(\R^2).
			\end{align}
		\end{itemize}
	\end{proposition}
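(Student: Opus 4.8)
The plan is to treat the two embeddings separately, since each is a classical fact (as the reference to \cite{BL} indicates) that reduces to one standard tool: the Hardy–Littlewood–Sobolev inequality for the first, and Morrey's inequality for the second.

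For the first embedding, the idea is to realize $u$ as a Riesz potential acting on $|\nabla|^\gamma u$. Setting $f:= |\nabla|^\gamma u$, one has $u = |\nabla|^{-\gamma} f = I_\gamma f$, where $I_\gamma$ is convolution on $\R^2$ against the kernel $c_\gamma |x|^{\gamma-2}$; this is legitimate because $0<\gamma<\frac{2}{p}<2$, so the kernel is locally integrable. The analytic input is then the Hardy–Littlewood–Sobolev inequality, which asserts that $I_\gamma$ maps $L^p(\R^2)$ boundedly into $L^q(\R^2)$ exactly when $\frac1q = \frac1p - \frac{\gamma}{2}$ and $1<p<q<\infty$. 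Applying it yields
\[
\|u\|_{L^q} = \|I_\gamma f\|_{L^q} \lesssim \|f\|_{L^p} = \big\||\nabla|^\gamma u\big\|_{L^p},
\]
which is the claim. (Alternatively one could pass through the Littlewood–Paley identification $\dot W^{\gamma,p} = \dot F^{\gamma}_{p,2}$ and a Triebel–Lizorkin embedding, but the Riesz-potential route is the most transparent.)

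For the second embedding, the content is Morrey's inequality in dimension two. First I would establish the H\"older seminorm bound. For $x\ne y$, set $r:=|x-y|$ and compare both $u(x)$ and $u(y)$ to the average of $u$ over a ball of radius $r$; the fundamental theorem of calculus along radii together with Fubini controls the oscillation of $u$ over such a ball by $\int_{B} |x-z|^{-1}\,|\nabla u(z)|\,dz$, the kernel exponent $-1 = 1-d$ being dictated by $d=2$. H\"older's inequality with exponents $p$ and $p'$ then gives, since the integral $\int_B |x-z|^{-p'}\,dz$ converges precisely when $p'<2$, i.e. $p>2$,
\[
|u(x)-u(y)| \lesssim r^{1-\frac{2}{p}} \|\nabla u\|_{L^p},
\]
which is exactly the $\mathcal{C}^{1-2/p}$-seminorm bound $\sup_{x\ne y}\frac{|u(x)-u(y)|}{|x-y|^{1-2/p}} \lesssim \|\nabla u\|_{L^p}$, with exponent in $(0,1)$ thanks to $p>2$. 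The same averaging argument on a fixed ball, combined with H\"older, bounds $\|u\|_{L^\infty}$ by $\|u\|_{W^{1,p}}$. Adding the two estimates gives $\|u\|_{\mathcal{C}^{1-2/p}} = \|u\|_{L^\infty} + \sup_{x\ne y}\frac{|u(x)-u(y)|}{|x-y|^{1-2/p}} \lesssim \|u\|_{W^{1,p}}$, and the choice $p=4$ specializes to \eqref{sobo-holder}.

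Neither step presents a genuine obstacle; both are textbook results. If anything requires care it is the bookkeeping at the endpoints: in the first part the strict inequalities $1<p<q<\infty$ are essential, as $I_\gamma$ fails to be bounded at $p=1$ or $q=\infty$; in the second part the convergence of $\int_B |x-z|^{-p'}\,dz$ and the positivity of the H\"older exponent both hinge on the dimensional restriction $p>2=d$. Since the excerpt only uses these as off-the-shelf tools, in practice I would simply cite \cite{BL} for the full proofs and record the two displayed estimates.
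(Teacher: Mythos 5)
Your proposal is correct: both sketches (the Riesz-potential/Hardy--Littlewood--Sobolev argument for the fractional embedding, and the Morrey oscillation argument for the H\"older embedding, with the exponent bookkeeping $p'<2 \Leftrightarrow p>2$ done correctly) are the standard proofs of these facts. The paper itself offers no proof at all --- it records the proposition as a classical result and cites \cite{BL} --- which is precisely the course of action you identify at the end of your proposal, so there is nothing to reconcile between the two.
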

	
	%We also have the fractional chain rule (see e.g. \cite{KPV1}).
	%\begin{lemma}[Fractional chain rule \cite{KPV1}] \label{lem-FCR}
	%	Suppose $G\in C^1(\C)$, $\gamma \in (0,1]$ and $1<r, r_1, r_2  <\infty$ are such that $\frac{1}{r}=\frac{1}{r_1}+\frac{1}{r_2}$. Then,
	%	\[
	%	\| |\nabla|^\gamma G(u)\|_{L^r}\lesssim \|G'(u)\|_{L^{r_1}}\,\| |\nabla|^\gamma u\|_{L^{r_2}}.
	%	\]
	%\end{lemma}
	
	The following continuity argument (or bootstrap argument) will also be useful for our purpose.
	\begin{lemma} \label{lem-boots}
		Let $I\subset\R$ be a time interval, and $X : I\to [0,\infty)$ be a continuous function satisfying, for every $t\in I$,
		\begin{equation}
		\label{boots1}
		X(t) \leq a + b [X(t)]^\theta,
		\end{equation}
		where $a,b>0$ and $\theta>0$ are constants. Assume that, for some $t_0\in I$,
		\begin{equation}
		\label{boots2}
		X(t_0)\leq 2a, \quad b <2^{-\theta}a^{1-\theta}.
		\end{equation}
		Then, for every $ t\in I$, we have
		\begin{equation}
		\label{boots3}
		X(t)\leq 2a.
		\end{equation}
	\end{lemma}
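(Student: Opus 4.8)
The plan is to run a standard continuity (connectedness) argument whose entire content reduces to a single algebraic observation about the smallness hypothesis \eqref{boots2}.

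First I would record the key computation: the condition $b < 2^{-\theta} a^{1-\theta}$ is precisely equivalent to
\[
a + b(2a)^\theta < 2a,
\]
since $b(2a)^\theta = 2^\theta a^\theta\, b < 2^\theta a^\theta \cdot 2^{-\theta} a^{1-\theta} = a$. In other words, the value $x = 2a$ strictly violates the structural inequality $x \le a + b x^\theta$ that $X$ must satisfy at every time. Consequently, no $t \in I$ can satisfy $X(t) = 2a$: if it did, then \eqref{boots1} would force $2a = X(t) \le a + b(2a)^\theta < 2a$, an absurdity. I would emphasize that this uses nothing about the size of $\theta$ beyond $\theta > 0$; the exponent appearing in \eqref{boots2} is tuned exactly so that $2a$ becomes a forbidden value.

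Next I would exploit continuity together with the connectedness of $I$. Since $X$ is continuous on the interval $I$, the two sets $\{t \in I : X(t) < 2a\}$ and $\{t \in I : X(t) > 2a\}$ are open, disjoint, and — by the previous step — cover all of $I$. As $I$ is connected and $t_0$ lies in the first set (from \eqref{boots2} we have $X(t_0) \le 2a$, while $X(t_0) = 2a$ is already excluded, so in fact $X(t_0) < 2a$), the second set must be empty. Hence $X(t) < 2a$ for every $t \in I$, which is even slightly stronger than \eqref{boots3}. Equivalently, one may phrase this as a proof by contradiction: if $X(t_1) > 2a$ for some $t_1 \in I$, the intermediate value theorem applied between $t_0$ and $t_1$ would produce a time at which $X$ equals the forbidden value $2a$.

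There is essentially no hard step here. The only substantive point is verifying that the specific constant $2^{-\theta} a^{1-\theta}$ in \eqref{boots2} is exactly what turns $2a$ into a barrier for \eqref{boots1}; everything else is the routine topology of continuous functions on an interval. The one place to be careful is the initial anchor: one must check that $X(t_0)$ lies \emph{strictly} below $2a$ rather than merely at most $2a$, which follows automatically by evaluating \eqref{boots1} at $t = t_0$, so that $t_0$ genuinely belongs to the lower open set and the connectedness argument closes.
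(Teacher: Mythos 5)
Your proof is correct and is essentially the paper's own argument: both rest on the single algebraic fact that $b<2^{-\theta}a^{1-\theta}$ makes $X(t)=2a$ impossible under \eqref{boots1}, combined with continuity of $X$ on the interval $I$ to prevent $X$ from crossing that forbidden value; indeed, the contradiction-via-intermediate-value-theorem phrasing you mention at the end is exactly the paper's proof. The connectedness/open-cover packaging is only a cosmetic variant of the same idea.
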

	\begin{proof}
		Assume there exists $t_1 \in I$ such that $X(t_1)>2a$. Then by continuity, there exists $t_2 \in [t_0,t_1)$ such that $X(t_2)=2a$. This contradicts the second assumption in \eqref{boots2} since $2a\leq a+ b(2a)^\theta$ implies $b\geq 2^{-\theta}a^{1-\theta}$.
	\end{proof}

	%\begin{lemma} [Continuity argument \cite{BG}] 
	%	Let $X:[0,T] \rightarrow \R$ be a non-negative continuous function, such that, for every $0\leq t \leq T$,
	%	\[
	%	X(t) \leq a + b [X(t)]^\theta,
	%	\]
	%	where $a,b>0$ and $\theta>1$ are constants satisfying
	%	\[
	%	a \leq \left(1-\frac{1}{\theta}\right) \frac{1}{(\theta b)^{1/(\theta-1)}}, \quad X(0) \leq \frac{1}{(\theta b)^{1/(\theta-1)}}.
	%	\]
	%	Then, for every $0\leq t \leq T$, we have
	%	\[
	%	X(t) \leq \frac{\theta}{\theta-1} a.
	%	\]
	%\end{lemma}
	
	%We end this subsection by recalling the sharp Gagliardo-Nirenberg inequality due to Weinstein \cite{Weinstein}.
	%\begin{lemma} \label{lem-weinstein}
	%	For every $u \in H^1(\R^2)$, it holds that
	%	\begin{align} \label{GN-ine}
	%	\|u\|^4_{L^4} \leq \frac{2}{\|W\|^2_{L^2}} \|u\|^2_{L^2} \|\nabla u\|^2_{L^2},
	%	\end{align}
	%	where $W$ is the unique positive radial solution to \eqref{ell-equ-W}.
	%\end{lemma}

	\subsection{Linear Schr\"odinger equation}
	It is well-known that solutions to the linear Schr\"odinger equation
	\[
	i\partial_t u + \Delta u =0, \quad u(0) = u_0, \quad (t,x) \in \R \times \R^2
	\]
	satisfy the $L^2$-isometry
	\begin{align} \label{L2-iso}
	\|u(t)\|_{L^2} = \|u_0\|_{L^2}
	\end{align}
	and for $t\ne 0$ the dispersive inequality
	\begin{align} \label{disper-est}
	\|u(t)\|_{L^\infty} \lesssim |t|^{-1} \|u_0\|_{L^1}.
	\end{align}
	\begin{definition}
		A pair $(q,r)$ is called {\bf Schr\"odinger admissible} if 
		\[
		q\in [2,\infty], \quad r \in [2,\infty), \quad \frac{2}{q}+\frac{2}{r}=1.
		\]
		A pair $(q,r)$ is called {\bf Schr\"odinger acceptable} if 
		\[
		q \in [1,\infty], \quad r\in [1,\infty), \quad (q,r) =(\infty,2) \quad \text{or} \quad \frac{1}{q}+\frac{2}{r}<1.
		\]
	\end{definition}
	Note that if $(q,r)$ is a Schr\"odinger admissible pair, then $(q,r)$ is also a Schr\"odinger acceptable pair. Thanks to \eqref{L2-iso}, \eqref{disper-est} and the $TT^*$-argument, we get the following Strichartz estimates (see \cite{Foschi, KT}).
	
	\begin{proposition}[Strichartz estimates \cite{Foschi,KT}] \label{prop-strichartz}
		There exists a positive constant $C$ such that the following estimates hold true:
		\begin{itemize}
			\item (Homogeneous estimates)
			\begin{align*}
			\|e^{it\Delta} u_0\|_{L^q(\R,L^r)} \leq C \|u_0\|_{L^2}
			\end{align*}
			for any Schr\"odinger admissible pair $(q,r)$.
			\item (Inhomogeneous estimates)
			\begin{align} \label{inho-stri-est}
			\left\|\int_0^t e^{i(t-s)\Delta} f(s) ds \right\|_{L^q(\R,L^r)} \leq C \|f\|_{L^{m'}(\R,L^{n'})}
			\end{align}
			for any Schr\"odinger acceptable pairs $(q,r)$ and $(m,n)$ satisfying 
			\[
			\frac{2}{q}+\frac{2}{r} = 2- \left(\frac{2}{m}+\frac{2}{n}\right).
			\]
		\end{itemize}
	\end{proposition}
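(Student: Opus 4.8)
The plan is to deduce both families of bounds from the single analytic input obtained by interpolating the two hypotheses \eqref{L2-iso} and \eqref{disper-est}, and then to run the $TT^*$ and Hardy--Littlewood--Sobolev machinery; the homogeneous case is routine, whereas the inhomogeneous case for \emph{acceptable} (as opposed to admissible) pairs is the substantial point and is Foschi's theorem \cite{Foschi}. First I would interpolate \eqref{L2-iso}, viewed as $\|e^{it\Delta}\|_{L^2\to L^2}\lesssim 1$, with the dispersive bound \eqref{disper-est}, i.e. $\|e^{it\Delta}\|_{L^1\to L^\infty}\lesssim |t|^{-1}$, by Riesz--Thorin. This yields, for every conjugate pair $(r,r')$ with $2\le r\le\infty$ and each $t\ne0$,
\[
\|e^{it\Delta} g\|_{L^r}\lesssim |t|^{-\left(1-\frac{2}{r}\right)}\|g\|_{L^{r'}}.
\]
This decay estimate (only at conjugate spatial exponents) is the sole ingredient used below.

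\emph{Homogeneous estimate.} By the standard $TT^*$ argument, the bound $\|e^{it\Delta}u_0\|_{L^q_tL^r_x}\lesssim\|u_0\|_{L^2}$ is equivalent to the boundedness of $F\mapsto\int_{\R}e^{i(t-s)\Delta}F(s)\,ds$ from $L^{q'}_tL^{r'}_x$ to $L^q_tL^r_x$. Inserting the decay estimate inside the integral and applying Minkowski's inequality reduces the claim to the one-dimensional fractional integration bound
\[
\left\|\int_{\R}|t-s|^{-\left(1-\frac{2}{r}\right)}\,\|F(s)\|_{L^{r'}_x}\,ds\right\|_{L^q_t}\lesssim \big\|\,\|F\|_{L^{r'}_x}\big\|_{L^{q'}_t},
\]
which is exactly the $1$-D Hardy--Littlewood--Sobolev inequality with gain governed by $\frac{2}{q}+\frac{2}{r}=1$, valid precisely when $0<1-\frac{2}{r}<1$, that is $2<r<\infty$. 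The case $r=2$ (so $q=\infty$) is immediate from \eqref{L2-iso}; the opposite endpoint $(q,r)=(2,\infty)$ is excluded here by the constraint $r<\infty$, so the delicate Keel--Tao endpoint analysis \cite{KT} is never needed.

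\emph{Inhomogeneous estimate.} Now the input and output spatial exponents $n'$ and $r$ need not be conjugate, so the decay estimate cannot be applied in one shot. I would instead dualise \eqref{inho-stri-est} into the bilinear form
\[
\left|\iint_{s<t}\big\langle e^{i(t-s)\Delta}f(s),\,g(t)\big\rangle\,ds\,dt\right|\lesssim \|f\|_{L^{m'}_tL^{n'}_x}\,\|g\|_{L^{q'}_tL^{r'}_x},
\]
bound the pairing by the two conjugate decay estimates at the exponents $r$ and $n$ respectively, and interpolate bilinearly between them. This produces an effective kernel $|t-s|^{-\sigma}$ whose order $\sigma$ is the convex combination of $1-\frac{2}{r}$ and $1-\frac{2}{n}$ dictated by the exponents, after which the one-dimensional Hardy--Littlewood--Sobolev inequality closes the time integral under exactly the gap condition $\frac{2}{q}+\frac{2}{r}=2-\big(\frac{2}{m}+\frac{2}{n}\big)$. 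The acceptability hypotheses $(q,r)=(\infty,2)$ or $\frac{1}{q}+\frac{2}{r}<1$ (and likewise for $(m,n)$) are precisely what keep $\sigma\in(0,1)$ and the time exponents inside the admissible HLS range.

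The hard part is this last step for genuinely acceptable pairs. When the exponents satisfy the strict inequality $q>m'$ one may first prove the estimate for the untruncated operator $\int_{\R}e^{i(t-s)\Delta}f\,ds$ and recover the causal truncation $\int_0^t$ via the Christ--Kiselev lemma; but in the borderline acceptable regime Christ--Kiselev can fail, and one must instead treat the retarded kernel $\{s<t\}$ directly, tracking the time ordering throughout the bilinear interpolation. Controlling this off-diagonal, retarded estimate uniformly over the whole acceptable range is exactly the content of Foschi's argument \cite{Foschi}, which I would follow.
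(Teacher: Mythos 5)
Your proposal is correct and takes essentially the same route as the paper: the paper offers no proof of this proposition, but introduces it exactly as you do, deducing it from the $L^2$-isometry \eqref{L2-iso} and the dispersive estimate \eqref{disper-est} via the $TT^*$-argument, with \cite{KT} cited for the (non-endpoint, in 2D) admissible estimates and \cite{Foschi} for the inhomogeneous estimates over acceptable pairs. Your elaboration — Riesz--Thorin interpolation to the conjugate-exponent decay bound, Hardy--Littlewood--Sobolev to close the time integral, and deferral of the retarded bilinear estimate for genuinely acceptable pairs to Foschi's theorem — is precisely what that citation compresses.
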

	Note that, in particular, $(4,4)$ is a Schr\"odinger admissible pair, and its dual pair is $\left(\frac{4}{3},\frac{4}{3}\right)$. We denote for any time interval $I \subset \R$, 
	\begin{align} \label{defi-ST-norm}
	\|u\|_{\ST(I)}:=  \|\scal{\nabla} u\|_{L^4(I\times \R^2)} + \|\scal{\nabla} u\|_{L^\infty(I,L^2)}
	\end{align}
	and
	\begin{align} \label{defi-ST-dual}
	\|u\|_{\ST^*(I)}:=  \|\scal{\nabla} u\|_{L^{\frac{4}{3}}(I\times \R^2)},
	\end{align}
	where $\scal{\nabla}=\sqrt{1-\Delta}$.

	\section{Sharp thresholds for global existence and blow-up}
	\label{S3}
	\setcounter{equation}{0}
	\subsection{Global existence}
	Let us prove the global existence for initial data in $\mathcal{A}^+_\mu$. 
	
	\noindent {\bf Proof of Theorem $\ref{theo-gwp}$.}
	We first note that if $u_0 \in \mathcal{A}^+_\mu$, then 
	\[
	\frac{1}{2} \|\nabla u_0\|^2_{L^2} = S_\mu(u_0) - P_\mu(u_0) < S_\mu(u_0) < S_\mu(Q_\mu) = \frac{1}{2} \|\nabla Q_\mu\|^2_{L^2} <\frac{1}{2}
	\]
	which implies that $\|\nabla u_0\|_{L^2}<1$. By Theorem $\ref{theo-lwp}$, there exists a unique local solution to \eqref{NLS} with initial data $u_0$. Let $[0,T^*)$ be the maximal forward time interval of existence. Since $\mathcal{A}^+_\mu$ is invariant under the flow of \eqref{NLS}, we have $u(t) \in \mathcal{A}^+_\mu$ for all $t\in [0,T^*)$. By the conservation of mass and energy, we see that
	\[
	\frac{1}{2} \|\nabla u(t)\|^2_{L^2} = S_\mu(u(t)) - P_\mu(u(t)) < S_\mu(u(t))= S_\mu(u_0) < S_\mu(Q_\mu)
	\]
	for all $t\in [0,T^*)$. Hence $\|\nabla u(t)\|_{L^2} < 2S_\mu(Q_\mu)<1$ for all $t\in [0,T^*)$. By the local theory, we can extend the local solution globally in time. 
	\hfill $\Box$
	
	\subsection{Finite time blow-up}
	In this subsection, we give the proof of the finite time blow-up given in Theorem $\ref{theo-blowup}$. We will consider separately two cases: $E_\mu(u_0)<0$ and $E_\mu(u_0) \geq 0$.
	
	\subsubsection{\bf Finite time blow-up for negative energy initial data}
	
	\begin{lemma} \label{lem-blow-nega-Sig}
		Let $\mu \in \{0,1\}$. Let $u_0 \in H^1$ be such that $\|\nabla u_0\|_{L^2}<1$ and $E_\mu(u_0)<0$. If $u_0 \in L^2(|x|^2 dx)$, then the corresponding solution to \eqref{NLS} blows up in finite time.
	\end{lemma}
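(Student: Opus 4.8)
The statement concerns finite-time blow-up for negative-energy data with finite variance. The natural quantity to track is the variance $V(t):=\|xu(t)\|_{L^2}^2$, and the key identity is the virial relation \eqref{virial}, namely $V''(t)=8I_\mu(u(t))$, which holds on $[0,T^*)$. So the whole proof reduces to showing that $I_\mu(u(t))$ stays negative and bounded away from zero in a way that forces $V$ to vanish in finite time, contradicting $V(t)\geq 0$.

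First I would record the pointwise algebraic inequality that controls the nonlinear part of the virial functional. Writing out $f_\mu$ and $F_\mu$ explicitly, one checks that
\[
\int \overline{\phi} f_\mu(\phi) - 4 F_\mu(\phi)\, dx \geq 0, \quad \forall \phi \in H^1,
\]
which is exactly the inequality flagged in the introduction just before \eqref{S-I-intro}. This comes from comparing the Taylor coefficients of $e^{4\pi|\phi|^2}$ term by term: the pointwise integrand is nonnegative because the series $\sum_{k\geq 2}\frac{(4\pi|\phi|^2)^k}{k!}(\text{positive coefficient})$ has all coefficients of the correct sign after subtracting $4F_\mu$. The $\mu$-dependent quadratic and quartic corrections cancel or improve the sign, so the estimate holds uniformly in $\mu\in\{0,1\}$.

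Given this, I would use the second form of $I_\mu$ in its definition, $I_\mu(\phi)=2E_\mu(\phi)-\int \overline{\phi}f_\mu(\phi)-4F_\mu(\phi)\,dx$, together with conservation of energy, to estimate
\[
I_\mu(u(t)) = 2E_\mu(u_0) - \int \overline{u(t)} f_\mu(u(t)) - 4 F_\mu(u(t))\, dx \leq 2E_\mu(u_0) < 0.
\]
Thus $V''(t)=8I_\mu(u(t))\leq 16 E_\mu(u_0)=:-c<0$ for all $t\in[0,T^*)$ with $c>0$ a fixed constant. Integrating twice gives $V(t)\leq V(0)+V'(0)t-\tfrac{c}{2}t^2$, a downward parabola, so $V(t)$ must reach zero at some finite time $t_*$. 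Since $V(t)=\|xu(t)\|_{L^2}^2\geq 0$, the solution cannot exist beyond $t_*$, forcing $T^*\leq t_*<\infty$, i.e. finite-time blow-up.

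The main obstacle, such as it is, lies not in the convexity mechanics but in two supporting technical points. First, one must justify that $V(t)$ is genuinely twice differentiable with $V''=8I_\mu(u)$; for the exponential nonlinearity this requires the extra regularity $u\in L^4_{\mathrm{loc}}(W^{1,4})$ from Theorem \ref{theo-lwp} and an approximation/truncation argument to legitimize the virial computation \eqref{virial-proof}, since $xu$ need not a priori lie in the energy space with enough control. Second, one must verify the pointwise inequality on the nonlinear term carefully, keeping track of the $\mu$ term; this is the only place where the specific structure of $f_\mu$ and $F_\mu$ enters, and getting the Taylor-series signs right is the real content of the argument. Everything else is the classical Glassey scheme.
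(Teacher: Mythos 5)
Your proposal is correct and follows essentially the same route as the paper: the virial identity $V''(t)=8I_\mu(u(t))$, the pointwise positivity of $\overline{\phi}f_\mu(\phi)-4F_\mu(\phi)$ (the paper verifies this by showing $g_\mu(s):=s(e^s-1-\mu s)-2\bigl(e^s-1-s-\tfrac{\mu}{2}s^2\bigr)\geq 0$, where in fact the $\mu$-terms cancel exactly, matching your Taylor-coefficient observation), energy conservation, and Glassey's convexity argument. No substantive difference.
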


	\begin{proof}
		By the local theory, the condition $\|\nabla u_0\|_{L^2}<1$ ensures the existence of local solutions for \eqref{NLS}. It is well-known (see e.g. \cite{Cazenave}) that if $u_0 \in \Sigma:= H^1 \cap L^2(|x|^2 dx)$, then the corresponding solution belongs to $\Sigma$ and satisfies
		\[
		\frac{d^2}{dt^2} \|xu(t)\|^2_{L^2} =8 I_\mu(u(t)) = 16 E_\mu(u(t)) - 8 \int \overline{u}(t) f_\mu(u(t)) - 4 F_\mu(u(t)) dx
		\]
		for all $t\in [0,T^*)$.	Note that a direct computation shows
		\[
		\overline{u} f_\mu(u) - 4 F_\mu(u) = \frac{1}{4\pi} g_\mu(4\pi |u|^2),
		\]
		where
		\[
		g_\mu(s):= s(e^s-1-\mu s) - 2 \left(e^s -1 - s -\frac{\mu}{2} s^2 \right).
		\]
		%It is easy to see that $g_\mu(0) = g'_\mu(0)=0$ and
		%\[
		%g_\mu'(s) = se^s-e^s+1, \quad g''_\mu(s) = se^s.
		%\]
		It is easy to see that $g_\mu(s) \geq 0$ for all $s\geq 0$, hence 
		\begin{align} \label{posi-nonli}
		\int \overline{u} f_\mu(u) - 4 F_\mu(u) dx\geq 0
		\end{align}
		for all $u \in H^1$. This together with the conservation of energy imply that
		\[
		\frac{d^2}{dt^2} \|xu(t)\|^2_{L^2} \leq 16 E_\mu(u(t)) = 16 E_\mu(u_0) <0
		\]
		for all $t\in [0,T^*)$. By the standard argument of Glassey \cite{Glassey}, the solution must blow up in finite time.
	\end{proof}

	We are next interested in the finite time blow-up of radial solutions for \eqref{NLS}. 
	To do this, we need the following virial estimates related to \eqref{NLS}. Given a real-valued function $\varphi$, we define the virial potential
	\[
	V_\varphi(t):= \int \varphi |u(t)|^2 dx.
	\]
	\begin{lemma} [\cite{TVZ}] Let $u$ be a sufficiently smooth and decaying solution to \eqref{NLS}. Then it holds that
		\[
		\frac{d}{dt} V_\varphi(t) = 2 \int \nabla \varphi \cdot \emph{Im}(\overline{u}(t) \nabla u(t)) dx
		\]
		and
		\begin{align*} 
		\frac{d^2}{dt^2} V_\varphi(t) = - \int \Delta^2 \varphi |u(t)|^2 dx &+ 4\sum_{j,k} \int \partial^2_{jk} \varphi \emph{ Re}(\partial_j \overline{u}(t) \partial_k u(t)) dx \\
		&- 2 \int \nabla \varphi \cdot \{f(u), u\}_p(t) dx,
		\end{align*}
		where $\{f,g\}_p = \emph{Re}(f \nabla \overline{g}-g \nabla \overline{f})$ is the momentum bracket.
	\end{lemma}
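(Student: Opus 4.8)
The statement is a standard virial computation for the NLS flow, and the sole purpose of the ``sufficiently smooth and decaying'' hypothesis is to legitimize differentiation under the integral sign and to guarantee that every integration by parts below produces no boundary contribution. I would therefore treat all such manipulations as justified and concentrate on the algebra. Throughout I write the equation in the form $\partial_t u = i\Delta u + i f_\mu(u)$ (equivalently $\partial_t \overline u = -i\Delta\overline u - i\overline{f_\mu(u)}$), and I record the gauge invariance of the nonlinearity: since $f_\mu(u) = g_\mu(|u|^2)\,u$ with $g_\mu$ real-valued, the product $\overline u f_\mu(u) = g_\mu(|u|^2)|u|^2$ is real, so that $\ima(\overline u f_\mu(u)) = 0$.

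For the first identity I compute the transport of the mass density. Using the equation,
\[
\partial_t |u|^2 = 2\,\rea(\overline u\,\partial_t u) = 2\,\rea\big(i\overline u\Delta u + i\overline u f_\mu(u)\big) = -2\,\ima(\overline u\Delta u),
\]
where the last step uses $\rea(iz) = -\ima(z)$ together with the vanishing of $\ima(\overline u f_\mu(u))$. The elementary identity $\ima(\overline u\Delta u) = \nabla\cdot\ima(\overline u\nabla u)$ (the term $|\nabla u|^2$ being real) then yields the continuity equation $\partial_t |u|^2 = -2\,\nabla\cdot\ima(\overline u\nabla u)$. Multiplying by $\varphi$, integrating, and integrating by parts once gives $\frac{d}{dt}V_\varphi = 2\int\nabla\varphi\cdot\ima(\overline u\nabla u)\,dx$, as claimed.

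For the second identity I differentiate this expression and move $\partial_t$ onto the momentum density $J_j := \ima(\overline u\,\partial_j u)$, so that $\frac{d^2}{dt^2}V_\varphi = 2\sum_j\int \partial_j\varphi\,\partial_t J_j\,dx$. Substituting the equation into $\partial_t J_j = \ima(\partial_t\overline u\,\partial_j u + \overline u\,\partial_j\partial_t u)$ and using $\ima(iz) = \rea(z)$ splits $\partial_t J_j$ into a dispersive part $\rea(\overline u\,\partial_j\Delta u - \Delta\overline u\,\partial_j u)$ and a nonlinear part $\rea(\overline u\,\partial_j f_\mu(u) - \overline{f_\mu(u)}\,\partial_j u)$. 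The nonlinear part equals $-\{f_\mu(u),u\}_{p,j}$: conjugating inside $\rea$ turns it into $-\rea\big(f_\mu(u)\,\partial_j\overline u - u\,\partial_j\overline{f_\mu(u)}\big)$, which is precisely $-\{f_\mu(u),u\}_{p,j}$. Pairing with $2\partial_j\varphi$ and summing reproduces the term $-2\int\nabla\varphi\cdot\{f_\mu(u),u\}_p\,dx$.

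It then remains to reduce the dispersive contribution $2\sum_j\int\partial_j\varphi\,\rea(\overline u\,\partial_j\Delta u - \Delta\overline u\,\partial_j u)\,dx$ to the two linear terms in the statement; this bookkeeping is the only genuinely computational step and the main obstacle. The clean route is to exhibit the dispersive part of $\partial_t J_j$ as a divergence, namely $\rea(\overline u\,\partial_j\Delta u - \Delta\overline u\,\partial_j u) = -\partial_k T_{jk}$ with the kinetic stress tensor $T_{jk} := 2\,\rea(\partial_j\overline u\,\partial_k u) - \tfrac12\delta_{jk}\Delta|u|^2$, which one verifies by expanding $\partial_k T_{jk}$ and using $\Delta|u|^2 = 2\,\rea(\overline u\Delta u) + 2|\nabla u|^2$ to cancel the second-order cross terms. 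Integrating by parts,
\[
2\sum_j\int \partial_j\varphi\,(-\partial_k T_{jk})\,dx = 2\sum_{j,k}\int \partial^2_{jk}\varphi\,T_{jk}\,dx = 4\sum_{j,k}\int \partial^2_{jk}\varphi\,\rea(\partial_j\overline u\,\partial_k u)\,dx - \int\Delta\varphi\,\Delta|u|^2\,dx,
\]
and a final integration by parts turns $\int\Delta\varphi\,\Delta|u|^2$ into $\int\Delta^2\varphi\,|u|^2$, producing exactly the Hessian and bilaplacian terms. The difficulty is purely in tracking which real parts and index contractions survive each integration by parts; there is no analytic subtlety once smoothness and decay remove all boundary terms.
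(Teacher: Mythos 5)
Your proposal is correct. Note that the paper itself offers no proof of this lemma --- it is quoted directly from the reference [TVZ] --- so there is no in-paper argument to compare against; your computation (the continuity equation $\partial_t|u|^2=-2\,\nabla\cdot\ima(\overline u\nabla u)$ for the first identity, then differentiating the momentum density $J_j=\ima(\overline u\,\partial_j u)$, splitting $\partial_t J_j$ into a nonlinear part that reproduces $-\{f_\mu(u),u\}_p$ and a dispersive part written as the divergence of the kinetic stress tensor $T_{jk}=2\rea(\partial_j\overline u\,\partial_k u)-\tfrac12\delta_{jk}\Delta|u|^2$) is precisely the standard argument underlying that citation, and every algebraic step, including the identity $\rea(\overline u\,\partial_j\Delta u-\Delta\overline u\,\partial_j u)=-\sum_k\partial_k T_{jk}$ and the final integrations by parts, checks out.
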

	
	In our case $f_\mu(u) = \left( e^{4\pi |u|^2} -1 - 4\pi \mu|u|^2\right) u$, a direct computation shows that
	\begin{align*}
	\{f_\mu(u),u\}_p &= \text{Re} \left(\left( e^{4\pi |u|^2}-1 - 4\pi \mu |u|^2\right) u \nabla \overline{u} - u \nabla \left( \left(e^{4\pi |u|^2}-1 - 4\pi \mu |u|^2 \right) \overline{u} \right) \right) \\
	&= - |u|^2 \nabla \left(e^{4\pi |u|^2}-1 - 4\pi \mu |u|^2\right) \\
	&= -\frac{1}{4\pi} \nabla \left( h_\mu(4\pi |u|^2)\right),
	\end{align*}
	where 
	\begin{align} \label{defi-h-mu}
	h_\mu(s):= s(e^s-1-\mu s) - \left( e^s -1-s-\frac{\mu}{2} s^2\right) = se^s - e^s +1 - \frac{\mu}{2} s^2.
	\end{align}
	We thus obtain
	\begin{align} \label{seco-viri}
	\frac{d^2}{dt^2} V_\varphi(t) = - \int \Delta^2 \varphi |u(t)|^2 dx + 4\sum_{j,k} \int \partial^2_{jk} \varphi \text{ Re}(\partial_j \overline{u}(t) \partial_k u(t)) dx - 2 \int \Delta \varphi H_\mu(u(t)) dx,
	\end{align}
	where
	\begin{align} \label{defi-H-mu}
	H_\mu(u) := \frac{1}{4\pi} h_\mu(4\pi |u|^2)  = \overline{u} f_\mu(u) - 2F_\mu(u).
	\end{align}
	Taking $\varphi(x) = |x|^2$ and using the fact $\partial^2_{jk}\varphi= 2 \delta_{jk}$ and $\Delta \varphi= 4$, we get
	\begin{align} \label{virial-proof}
	\frac{d^2}{dt^2} \|xu(t)\|^2_{L^2} = 8 \left(\|\nabla u(t)\|^2_{L^2} - \int H_\mu(u(t)) dx \right) = 8 I_\mu(u(t))
	\end{align}
	which confirms \eqref{virial}.
	
	Let $\zeta: [0,\infty) \rightarrow [0,2]$ be a smooth function satisfying
	\begin{align*} 
	\zeta(r) = \left\{
	\begin{array}{ccl}
	2 & \text{if} & 0\leq r \leq 1, \\
	0 &\text{if} & r\geq 2.
	\end{array}
	\right.
	\end{align*}
	We define the function $\theta: [0,\infty) \rightarrow [0,\infty)$ by
	\[
	\theta(r):= \int_0^r \int_0^s \zeta(z)dz ds.
	\]
	For $R>0$, we define the radial function
	\[
	\varphi_R(x) = \varphi_R(r) := R^2 \theta(r/R), \quad r=|x|.
	\]
	It is easy to see that
	\begin{align} \label{varphi-R}
	2 \geq \varphi''_R(r) \geq 0, \quad 2 - \frac{\varphi'_R(r)}{r} \geq 0, \quad 4-\Delta \varphi_R(x)\geq 0, \quad \forall r \geq 0, \quad \forall x \in \R^2.
	\end{align}
	\begin{lemma} \label{lem-viri-est}
		Let $\mu \in \{0,1\}$. Let $u_0 \in H^1$ be radially symmetric and satisfy $\|\nabla u_0\|_{L^2}<1$. Then the corresponding solution to \eqref{NLS} satisfies for any $t\in [0,T^*)$,
		\begin{align} \label{viri-est}
		\frac{d^2}{dt^2} V_{\varphi_R}(t) \leq 8 I_\mu(u(t)) +  CR^{-2} + C\left(e^{C R^{-1}} -1\right)
		\end{align}
		for some constant $C$ independent of $R$.
	\end{lemma}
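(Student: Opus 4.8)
The plan is to start from the general second-order virial identity \eqref{seco-viri} with the choice $\varphi=\varphi_R$ and to compare each of the three resulting terms against the corresponding term in $8I_\mu(u(t))$, which is precisely what the unbounded weight $\varphi(x)=|x|^2$ produces in \eqref{virial-proof}. Since radiality is preserved by the flow, $u(t)$ is radial and its gradient is purely radial, $\nabla u = u'(r)\tfrac{x}{r}$ with $r=|x|$; using $\partial^2_{jk}\varphi_R = \bigl(\delta_{jk}-\tfrac{x_jx_k}{r^2}\bigr)\tfrac{\varphi'_R}{r} + \tfrac{x_jx_k}{r^2}\varphi''_R$, the Hessian term collapses to $4\int \varphi''_R(r)|\nabla u|^2\,dx$, so that
\[
\frac{d^2}{dt^2}V_{\varphi_R}(t) = -\int \Delta^2\varphi_R\,|u(t)|^2\,dx + 4\int \varphi''_R\,|\nabla u(t)|^2\,dx - 2\int \Delta\varphi_R\,H_\mu(u(t))\,dx.
\]

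First I would dispose of the two linear terms using the scaling of $\varphi_R$ and the bounds \eqref{varphi-R}. Since $\varphi_R(x)=R^2\theta(|x|/R)$, one has $|\Delta^2\varphi_R|\lesssim R^{-2}$ with support in the annulus $\{R\le|x|\le 2R\}$, so conservation of mass bounds the first term in absolute value by $CR^{-2}\|u_0\|_{L^2}^2 = CR^{-2}$. For the second term, $0\le\varphi''_R\le 2$ gives at once $4\int\varphi''_R|\nabla u|^2\,dx \le 8\|\nabla u(t)\|_{L^2}^2$, which is exactly the gradient part of $8I_\mu(u(t))$.

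The heart of the matter, and the step I expect to be the main obstacle, is the nonlinear term. Writing $-2\Delta\varphi_R = -8 + 2(4-\Delta\varphi_R)$ splits it as $-8\int H_\mu(u)\,dx + 2\int(4-\Delta\varphi_R)H_\mu(u)\,dx$; the first piece completes the identification with $8I_\mu(u(t))$, so everything reduces to bounding the remainder $2\int(4-\Delta\varphi_R)H_\mu(u)\,dx$ from above. Here I would use that $H_\mu\ge0$ (indeed $h_\mu(0)=0$ and $h_\mu'(s)=s(e^s-1)\ge0$ for $\mu=1$, resp. $h_\mu'(s)=se^s\ge0$ for $\mu=0$, so $h_\mu$ is nondecreasing and nonnegative on $[0,\infty)$) and that $4-\Delta\varphi_R$ is supported in $\{|x|\ge R\}$ with $0\le 4-\Delta\varphi_R\le4$, whence the remainder is at most $8\int_{|x|\ge R}H_\mu(u)\,dx$. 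To control this tail I would invoke the radial Sobolev (Strauss) decay estimate: for radial $u\in H^1(\R^2)$,
\[
|u(x)|^2 \le \frac{1}{\pi|x|}\,\|u\|_{L^2}\,\|\nabla u\|_{L^2},
\]
which, with $\|u(t)\|_{L^2}=\|u_0\|_{L^2}$ and $\|\nabla u(t)\|_{L^2}<1$ on $[0,T^*)$, gives $4\pi|u(x)|^2\le CR^{-1}$ for $|x|\ge R$.

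Finally I would combine this pointwise smallness with the elementary inequality $h_\mu(s)\le s(e^s-1)$ on $[0,\infty)$ (immediate from $h_\mu(s)\le h_0(s)=se^s-e^s+1$ together with $e^s-1\ge s$). Applying it with $s=4\pi|u|^2$ on the tail and bounding the exponential by its maximal value there,
\[
\int_{|x|\ge R}H_\mu(u)\,dx = \frac{1}{4\pi}\int_{|x|\ge R}h_\mu(4\pi|u|^2)\,dx \le (e^{CR^{-1}}-1)\int_{|x|\ge R}|u|^2\,dx \le C(e^{CR^{-1}}-1).
\]
Summing the three contributions then yields $\frac{d^2}{dt^2}V_{\varphi_R}(t)\le 8I_\mu(u(t)) + CR^{-2} + C(e^{CR^{-1}}-1)$, as claimed. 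The only delicate point beyond these estimates is the justification of \eqref{seco-viri} for merely $H^1$ radial data, which is handled in the standard way by a regularization and approximation argument as in \cite{Cazenave}.
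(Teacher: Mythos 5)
Your proof is correct and follows essentially the same route as the paper's: exploiting $\varphi_R(x)=|x|^2$ on $\{|x|\le R\}$ to extract the full virial $8I_\mu(u(t))$, killing the exterior Hessian contribution via radiality and $\varphi_R''\le 2$, bounding the $\Delta^2\varphi_R$ term by $CR^{-2}$ through mass conservation, and controlling the exterior nonlinear tail with the Strauss radial decay estimate combined with $h_\mu(s)\le s(e^s-1)$. The only slip is your claim that $\Delta^2\varphi_R$ is supported in the annulus $\{R\le|x|\le 2R\}$ (in two dimensions $\varphi_R$ is linear in $r$ for $r\ge 2R$, so $\Delta^2\varphi_R\sim Rr^{-3}$ there and is nonzero on all of $\{|x|\ge R\}$), but this is harmless since your argument only uses the pointwise bound $|\Delta^2\varphi_R|\lesssim R^{-2}$, which does hold.
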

	
	\begin{proof}
		Since $\varphi_R(x) =|x|^2$ for $|x| \leq R$, we see that
		\begin{align*}
		\frac{d^2}{dt^2} V_{\varphi_R}(t) &= 8 \left(\|\nabla u(t)\|^2_{L^2} - \int H_\mu(u(t)) dx \right) \\
		&\mathrel{\phantom{=}} - 8\|\nabla u(t)\|^2_{L^2(|x|>R)} + 4 \sum_{j,k} \int_{|x|>R} \partial^2_{jk} \varphi_R \text{Re} (\partial_j \overline{u}(t) \partial_k u(t)) dx \\
		&\mathrel{\phantom{=}} - \int_{|x|>R} \Delta^2 \varphi_R |u(t)|^2 dx + 2 \int_{|x|>R} (4-\Delta \varphi_R) H_\mu(u(t)) dx.
		\end{align*}
		Since $u$ is radial, we use \eqref{varphi-R} and the fact
		\[
		\partial^2_{jk} = \left(\frac{\delta_{jk}}{r} - \frac{x_jx_k}{r^3}\right) \partial_r + \frac{x_j x_k}{r^2} \partial^2_r
		\]
		to get that
		\[
		\sum_{j,k} \partial^2_{jk} \varphi_R \partial_j \overline{u} \partial_k u = \varphi''_R(r) |\partial_r u|^2 \leq 2 |\partial_r u|^2 = 2 |\nabla u|^2.
		\]
		Thus
		\[
		4 \sum_{j,k} \int_{|x|>R} \partial^2_{jk} \varphi_R \text{Re}(\partial_j \overline{u} \partial_k u) dx - 8 \|\nabla u\|^2_{L^2(|x|>R)} \leq 0.
		\]
		Since $|\Delta^2\varphi_R| \lesssim R^{-2}$ and $|4-\Delta \varphi_R| \lesssim 1$, we have that
		\[
		\frac{d^2}{dt^2} V_{\varphi_R}(t) \leq 8 I_\mu(u(t)) + CR^{-2} + \int_{|x|>R} H_\mu(u(t)) dx.
		\]
		To estimate the last term, we recall the following radial Sobolev embedding due to Strauss \cite{Strauss}: 
		\[
		\sup_{x \ne 0} |x|^{\frac{1}{2}} |f(x)| \leq C \|\nabla f\|_{L^2}^{\frac{1}{2}} \|f\|_{L^2}^{\frac{1}{2}}, \quad \forall f \in H^1_{\text{rad}}.
		\]
		Note that 
		\[
		h_\mu(s) = se^s - e^s +1 - \frac{\mu}{2}s^2 \leq se^s-e^s+1 \leq s(e^s-1)
		\]
		for all $s\geq 0$ which implies that $H_\mu(u) \lesssim |u|^2 \left(e^{4\pi |u|^2}-1 \right)$. By the conservation of mass, the fact $\sup_{t\in [0,T^*)} \|\nabla u(t)\|_{L^2} \leq 1$ and the radial Sobolev embedding, we see that
		\begin{align} \label{est-non}
		\int_{|x|>R} H_\mu(u(t)) dx \lesssim \left(e^{4\pi \|u(t)\|_{L^\infty}^2}-1 \right) \|u(t)\|^2_{L^2} \lesssim e^{CR^{-1}} -1
		\end{align}
		for all $t\in [0,T^*)$. The proof is complete.
	\end{proof}
	
	%\begin{remark}
	%	The assumption $\sup_{t \in J} \|\nabla u(t)\|_{L^2} \leq 1$ is necessary. Indeed, one can construct a solution to \eqref{NLS} with initial data in $H^1$ having arbitrary size. The idea is to split the initial data into a small part in $H^1$ and a smooth one. First, one solves the IVP with smooth initial data to obtain a local solution $v$. Then, one considers the perturbed equation satisfied by $w:=u-v$ and with small initial data. Notice that a similar idea was used by Kenig-Ponce-Vega for a semilinear wave equation \cite{KPV}.
	%\end{remark}

	\begin{lemma} \label{lem-blow-nega-radi}
		Let $\mu \in \{0,1\}$. Let $u_0 \in H^1$ be such that $\|\nabla u_0\|_{L^2} <1$ and $E_\mu(u_0)<0$. If $u_0$ is radially symmetric, then the corresponding solution to \eqref{NLS} blows up in finite time.
	\end{lemma}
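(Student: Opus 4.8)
The plan is to run Glassey's convexity argument on the localized virial potential $V_{\varphi_R}(t)$ rather than on $\|xu(t)\|^2_{L^2}$, since radial data need not have finite variance. The single key estimate is already in hand: Lemma \ref{lem-viri-est} gives
\[
\frac{d^2}{dt^2} V_{\varphi_R}(t) \leq 8 I_\mu(u(t)) + CR^{-2} + C\left(e^{CR^{-1}}-1\right)
\]
for all $t \in [0,T^*)$, with $C$ independent of $R$. The starting point is to rewrite the virial functional in terms of the conserved energy. Recalling $I_\mu(\phi) = 2E_\mu(\phi) - \int \overline{\phi}f_\mu(\phi) - 4F_\mu(\phi)\,dx$ and the positivity $\int \overline{u}f_\mu(u) - 4F_\mu(u)\,dx \geq 0$ from \eqref{posi-nonli}, I would bound
\[
8 I_\mu(u(t)) \leq 16 E_\mu(u(t)) = 16 E_\mu(u_0),
\]
using conservation of energy. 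Hence
\[
\frac{d^2}{dt^2} V_{\varphi_R}(t) \leq 16 E_\mu(u_0) + CR^{-2} + C\left(e^{CR^{-1}}-1\right).
\]

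Since $E_\mu(u_0) < 0$ is a fixed negative number, the idea is to absorb the two remainder terms. Both $CR^{-2}$ and $C(e^{CR^{-1}}-1)$ tend to $0$ as $R \to \infty$ (the exponential term behaves like $C^2 R^{-1}$ for large $R$), so I would first fix $R = R_0$ large enough that
\[
CR_0^{-2} + C\left(e^{CR_0^{-1}}-1\right) \leq -8 E_\mu(u_0),
\]
which is possible precisely because the right-hand side is strictly positive. With this choice,
\[
\frac{d^2}{dt^2} V_{\varphi_{R_0}}(t) \leq 8 E_\mu(u_0) < 0
\]
for all $t \in [0,T^*)$. Thus $V_{\varphi_{R_0}}(t)$ is a nonnegative function whose second derivative is bounded above by a fixed negative constant, which forces $V_{\varphi_{R_0}}$ to become negative in finite time — an impossibility unless $T^* < \infty$. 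Therefore the solution cannot exist globally, and by the blow-up alternative \eqref{blow-alt} it blows up in finite time.

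The only genuinely delicate point, which I would handle carefully, is that this localized convexity argument does not require $u_0 \in \Sigma$: the potential $V_{\varphi_R}(t) = \int \varphi_R |u(t)|^2\,dx$ is finite for any $u(t) \in H^1$ because $\varphi_R$ is bounded (it grows like $|x|^2$ only up to $|x| \leq 2R$ and is then capped), so all manipulations are legitimate for merely $H^1$ radial data. I expect the main obstacle to be purely bookkeeping: verifying that $R_0$ can indeed be chosen uniformly in $t$ and that the constant $C$ from Lemma \ref{lem-viri-est}, which depends only on $M(u_0)$ and the uniform bound $\sup_t \|\nabla u(t)\|_{L^2} \leq 1$, stays fixed throughout the interval of existence. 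No new ideas beyond Lemma \ref{lem-viri-est} and \eqref{posi-nonli} are needed; the result follows by the standard Glassey convexity obstruction applied to $V_{\varphi_{R_0}}$.
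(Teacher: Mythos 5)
Your proof is correct and follows essentially the same route as the paper: both apply Lemma \ref{lem-viri-est}, bound $8I_\mu(u(t)) \leq 16 E_\mu(u_0)$ via \eqref{posi-nonli} and energy conservation, choose $R$ large enough to absorb the $CR^{-2} + C(e^{CR^{-1}}-1)$ error into half the negative energy, and conclude by Glassey's convexity argument. Your additional remarks on why finite variance is not needed and why the constant $C$ is uniform in $t$ are accurate and merely make explicit what the paper leaves implicit.
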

	
	\begin{proof}
		Since $u_0 \in H^1_{\rad}$ satisfies $\|\nabla u_0\|_{L^2} <1$, the local solution to \eqref{NLS} with initial data $u_0$ belongs to $H^1_{\rad}$ for all $t\in[0,T^*)$ and satisfies $\sup_{t\in [0,T^*)} \|\nabla u(t)\|_{L^2} \leq 1$. Applying Lemma $\ref{lem-viri-est}$, we have for any $t\in [0,T^*)$,
		\[
		\frac{d^2}{dt^2} V_{\varphi_R}(t) \leq 8 I_\mu(u(t)) + CR^{-2} + C\left(e^{CR^{-1}}-1\right)
		\]
		for some $C>0$ independent of $R$. By \eqref{posi-nonli} and the conservation of energy, we get
		\[
		\frac{d^2}{dt^2} V_{\varphi_R}(t) \leq 16 E_\mu(u_0) + CR^{-2} + C\left(e^{CR^{-1}}-1\right)
		\]
		for all $t\in [0,T^*)$. Taking $R>0$ sufficiently large, we obtain
		\[
		\frac{d^2}{dt^2} V_{\varphi_R}(t) \leq 8 E_\mu(u_0) <0
		\]
		for all $t\in [0,T^*)$. The standard argument of Glassey yields $T^*<+\infty$.
	\end{proof}

	\subsubsection{\bf Finite time blow-up for non-negative energy initial data}
	%\begin{lemma} \label{lem-I}
	%Let $\mu \in \{0,1\}$. Let $\phi \in H^1\backslash \{0\}$ satisfy $\|\phi\|_{L^2} < \frac{1}{2\sqrt{\pi}} \|R\|_{L^2}$ if $\mu=0$, where $R$ is the unique positive radial solution to \eqref{ell-equ-R}. Denote 
	%\begin{align} \label{defi-phi-lamb}
	%\phi_\lambda(x):= \lambda \phi(\lambda x).
	%\end{align}
	%Then there exists a unique $\lambda_\mu >0$ such that
	%\begin{itemize}
	%	\item $I_\mu(\phi_{\lambda_\mu})=0$.
	%	\item $\lambda_\mu<1$ if and only if $I_\mu(\phi)<0$.
	%	\item $\lambda_\mu=1$ if and only if $I_\mu(\phi) =0$.
	%	\item $I_\mu(\phi_\lambda) >0$ if $0<\lambda <\lambda_\mu$ and $I_\mu(\phi_\lambda)<0$ if $\lambda_\mu<\lambda <\infty$.
	%	\item $S_\mu(\phi_\lambda) <S_\mu(\phi_{\lambda_\mu})$ for all $\lambda>0$ and $\lambda \ne \lambda_\mu$.
	%	\item $\partial_\lambda S_\mu(\phi_\lambda) = \frac{1}{\lambda} I_\mu(\phi_\lambda)$.
	%\end{itemize}
	%\end{lemma}
	
	\begin{lemma} \label{lem-I}
		Let $\mu \in \{0,1\}$ and $\phi \in H^1\backslash \{0\}$. 
		Denote 
		\begin{align} \label{defi-phi-lamb}
		\phi_\lambda(x):= \lambda \phi(\lambda x).
		\end{align}
		It holds that
		\begin{itemize}
			\item $\partial_\lambda S_\mu(\phi_\lambda) = \frac{1}{\lambda} I_\mu(\phi_\lambda)$.
			\item $I_\mu (\phi_\lambda) = \lambda^2 \Phi_\mu(\lambda)$, where
			\[
			\lambda \mapsto \Phi_\mu (\lambda)
			\]
			is a strictly decreasing function on $(0,\infty)$ and
			\[
			\Phi_\mu(0):= \lim_{\lambda \rightarrow 0} \Phi_\mu(\lambda) = \|\nabla \phi\|^2_{L^2} - 2\pi(1-\mu) \|\phi\|^4_{L^4}.
			\]
		\end{itemize}
		In particular, if there exists ${\lambda_\mu }>0$ such that
		$\Phi_\mu(\lambda_\mu)=0$ then $\lambda_\mu$ is unique and 
		$\lambda\mapsto S_\mu(\phi_\lambda)$ has a strict maximum in $\lambda_\mu$.
	\end{lemma}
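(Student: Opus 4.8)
The plan is to reduce everything to one–variable computations, exploiting that all the nonlinear densities depend only on $|\phi|^2$. Writing $F_\mu(\phi)=\mathcal{F}_\mu(|\phi|^2)$ and $H_\mu(\phi)=\mathcal{H}_\mu(|\phi|^2)$ with $\mathcal{F}_\mu(s)=\frac{1}{8\pi}(e^{4\pi s}-1-4\pi s-8\pi^2\mu s^2)$ and $\mathcal{H}_\mu(s)=\frac{1}{4\pi}h_\mu(4\pi s)$, the scaling $\phi_\lambda(x)=\lambda\phi(\lambda x)$ leaves the mass invariant, gives $\|\nabla\phi_\lambda\|_{L^2}^2=\lambda^2\|\nabla\phi\|_{L^2}^2$, and, after the change of variables $y=\lambda x$, turns the potential integrals into $\int F_\mu(\phi_\lambda)\,dx=\lambda^{-2}\int\mathcal{F}_\mu(\lambda^2|\phi|^2)\,dy$ and likewise $\int H_\mu(\phi_\lambda)\,dx=\lambda^{-2}\int\mathcal{H}_\mu(\lambda^2|\phi|^2)\,dy$. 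This yields the closed forms $S_\mu(\phi_\lambda)=\tfrac{\lambda^2}{2}\|\nabla\phi\|_{L^2}^2+\tfrac12\|\phi\|_{L^2}^2-\lambda^{-2}\int\mathcal{F}_\mu(\lambda^2|\phi|^2)\,dy$ and $I_\mu(\phi_\lambda)=\lambda^2\|\nabla\phi\|_{L^2}^2-\lambda^{-2}\int\mathcal{H}_\mu(\lambda^2|\phi|^2)\,dy$, so that $\Phi_\mu(\lambda):=\lambda^{-2}I_\mu(\phi_\lambda)=\|\nabla\phi\|_{L^2}^2-\lambda^{-4}\int\mathcal{H}_\mu(\lambda^2|\phi|^2)\,dy$. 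Already this makes the factorization $I_\mu(\phi_\lambda)=\lambda^2\Phi_\mu(\lambda)$ transparent.

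For the first bullet I would differentiate the expression for $S_\mu(\phi_\lambda)$ in $\lambda$ (differentiation under the integral sign being justified by the finiteness of the exponential densities for a fixed $H^1$ function). The whole computation hinges on the pointwise Pohozaev-type identity $\mathcal{H}_\mu(s)=2s\mathcal{F}_\mu'(s)-2\mathcal{F}_\mu(s)$, an elementary algebraic check that exactly matches the virial density to the derivative of the potential. Using $|\phi|^2=\sigma/\lambda^2$ with $\sigma=\lambda^2|\phi|^2$, the two potential contributions combine as $2\lambda^{-3}\int\mathcal{F}_\mu(\sigma)\,dy-2\lambda^{-3}\int\sigma\mathcal{F}_\mu'(\sigma)\,dy=-\lambda^{-3}\int\mathcal{H}_\mu(\sigma)\,dy$, and therefore $\partial_\lambda S_\mu(\phi_\lambda)=\lambda\|\nabla\phi\|_{L^2}^2-\lambda^{-3}\int\mathcal{H}_\mu(\lambda^2|\phi|^2)\,dy=\frac1\lambda I_\mu(\phi_\lambda)$, as claimed.

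For the monotonicity in the second bullet I would show that for each fixed $s>0$ the map $\lambda\mapsto\lambda^{-4}\mathcal{H}_\mu(\lambda^2 s)$ is strictly increasing; differentiating, its sign equals that of $p(\sigma):=\sigma\mathcal{H}_\mu'(\sigma)-2\mathcal{H}_\mu(\sigma)$ at $\sigma=\lambda^2 s$. A direct computation gives $p(0)=0$ and, after the $\mu$-dependent terms cancel, $p'(\sigma)=16\pi^2\sigma^2 e^{4\pi\sigma}>0$, whence $p(\sigma)>0$ for $\sigma>0$; integrating against the nontrivial $\phi$ shows $\Phi_\mu$ is strictly decreasing. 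The boundary value follows from the Taylor expansion $\mathcal{H}_\mu(\sigma)=2\pi(1-\mu)\sigma^2+O(\sigma^3)$ near $0$, so that $\lambda^{-4}\mathcal{H}_\mu(\lambda^2|\phi|^2)\to 2\pi(1-\mu)|\phi|^4$ pointwise as $\lambda\to0$; passing the limit inside (monotone convergence, using that the integrand is monotone in $\lambda$ and $\mathcal{H}_\mu\ge0$) gives $\Phi_\mu(0)=\|\nabla\phi\|_{L^2}^2-2\pi(1-\mu)\|\phi\|_{L^4}^4$.

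Finally, uniqueness of $\lambda_\mu$ is immediate from strict monotonicity of $\Phi_\mu$, and the strict maximum of $\lambda\mapsto S_\mu(\phi_\lambda)$ follows by combining the two bullets: since $\partial_\lambda S_\mu(\phi_\lambda)=\lambda\Phi_\mu(\lambda)$ is positive for $\lambda<\lambda_\mu$ and negative for $\lambda>\lambda_\mu$, the map strictly increases up to $\lambda_\mu$ and strictly decreases afterward. The computations themselves are routine; the only genuinely delicate points are the analytic justifications for differentiating under the integral and for interchanging limit and integral, both handled by the finiteness of $\int(e^{4\pi|\phi|^2}-1)\,dx$ for a fixed $\phi\in H^1(\R^2)$ together with the monotonicity/sign structure above. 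I expect the algebraic verification that the $\mu$-terms cancel in $p'$ to be the conceptually important step, since it is exactly what makes the statement uniform in $\mu\in\{0,1\}$.
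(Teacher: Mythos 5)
Your proposal is correct and follows essentially the same route as the paper: the scaling computation, the identification $\partial_\lambda S_\mu(\phi_\lambda)=\frac{1}{\lambda}I_\mu(\phi_\lambda)$, and the reduction of monotonicity to the positivity of a one-variable function are all the same; indeed your $p(\sigma)=\sigma\mathcal{H}_\mu'(\sigma)-2\mathcal{H}_\mu(\sigma)$ with $p'(\sigma)=16\pi^2\sigma^2e^{4\pi\sigma}$ is exactly the paper's auxiliary function $l_\mu(s)=s^3k_\mu'(s)$ with $l_\mu'(s)=s^2e^s$, rewritten after the substitution $s=4\pi\sigma$. The only (harmless) difference is presentational: the paper factors out $|\phi|^4$ and shows $k_\mu(s)=s^{-2}h_\mu(s)$ is increasing, while you differentiate $\lambda^{-4}\mathcal{H}_\mu(\lambda^2 s)$ in $\lambda$ directly and are somewhat more explicit about the limit--integral interchange at $\lambda\to 0$.
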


	\begin{remark} \label{rem-I}
		The existence of such ${\lambda_\mu }$ is equivalent to $\Phi_\mu(0)>0$. This is due to the fact that $\lim_{\lambda \rightarrow \infty} \Phi_\mu(\lambda) =-\infty$. For $\mu=1$, it is always true. For $\mu=0$, it is true under the  following sufficient (not necessary) condition: $E_0(\phi)\geq 0$. Indeed, we write
		\begin{align*}
		E_0(\phi) &= \frac{1}{2} \left(\|\nabla \phi\|^2_{L^2} -2\pi \|\phi\|^4_{L^4} \right) - \frac{1}{8\pi} \int \left(e^{4\pi|\phi|^2} - 1- 4\pi |\phi|^2 - 8\pi^2 |\phi|^4\right) dx \\ 
		&= \frac{1}{2} \Phi_0(0) - \frac{1}{8\pi} \int \left(e^{4\pi|\phi|^2} - 1- 4\pi |\phi|^2 - 8\pi^2 |\phi|^4\right) dx.
		\end{align*}
		The condition $E_0(\phi) \geq 0$ implies that
		\[
		\Phi_0(0) \geq \frac{1}{4\pi} \int \left(e^{4\pi|\phi|^2} - 1- 4\pi |\phi|^2 - 8\pi^2 |\phi|^4\right) dx.
		\]
		Since $\phi \ne 0$, we infer that $\Phi_0(0)>0$.
		%\begin{align} \label{cond-W}
		%\|\phi\|_{L^2}< \frac{1}{2\sqrt{\pi}}\|W\|_{L^2},
		%\end{align}
		%where $W$ is the unique positive radial solution to \eqref{ell-equ-W}. Indeed, by the sharp Gagliardo-Nirenberg inequality \eqref{GN-ine},
		%\[
		%\Phi_0(0)= \|\nabla \phi\|^2_{L^2} - 2\pi \|\phi\|^4_{L^4} \geq \left(1-4\pi \left(\frac{\|\phi\|_{L^2}}{\|W\|_{L^2}} \right)^2 \right)\|\nabla \phi\|^2_{L^2}>0.
		%\]
	\end{remark}
	
	%\begin{lemma} \label{lem-I}
	%	Let $\mu \in \{0,1\}$ and $\phi \in H^1\backslash \{0\}$. Denote 
	%	\begin{align} \label{defi-phi-lamb}
	%	\phi_\lambda(x):= \lambda \phi(\lambda x).
	%	\end{align}
	%	It holds that
	%	\begin{itemize}
	%		\item $\partial_\lambda S_\mu(\phi_\lambda) = \frac{1}{\lambda} I_\mu(\phi_\lambda)$.
	%		\item $I_\mu (\phi_\lambda) = \lambda^2 \Phi_\mu(\lambda)$, where
	%		\[
	%		\lambda \mapsto \Phi_\mu (\lambda)
	%		\]
	%		is a strictly decreasing function on $(0,\infty)$
	%		\[
	%		\Phi_\mu(0):= \lim_{\lambda \rightarrow 0} \Phi_\mu(\lambda) = \|\nabla \phi\|^2_{L^2} - 2\pi(1-\mu) \|\phi\|^4_{L^4}.
	%		\]
	%	\end{itemize}
	%	In particular, there exists $\lambda_1 >0$ such that
	%	\begin{itemize}
	%		\item $I_1(\phi_{\lambda_1})=0$.
	%		\item $\lambda_1<1$ if and only if $I_1(\phi)<0$.
	%		\item $\lambda_1=1$ if and only if $I_1(\phi) =0$.
	%		\item $I_1(\phi_1) >0$ if $0<\lambda <\lambda_1$ and $I_1(\phi_\lambda)<0$ if $\lambda_1<\lambda <\infty$.
	%		\item $S_1(\phi_\lambda) <S_1(\phi_{\lambda_1})$ for all $\lambda>0$ and $\lambda \ne \lambda_1$.
	%		\item $\partial_\lambda S_1(\phi_\lambda) = \frac{1}{\lambda} I_1(\phi_\lambda)$.
	%	\end{itemize}
	%\end{lemma}
	
	\noindent {\it Proof of Lemma $\ref{lem-I}$.}
		We have 
		\begin{align*}
		I_\mu(\phi_\lambda) &= \|\nabla \phi_\lambda\|^2_{L^2} - \int \overline{\phi}_\lambda f_\mu(\phi_\lambda) - 2 F_\mu(\phi_\lambda) dx \\
		&= \lambda^2 \|\nabla \phi\|^2_{L^2} - \int \lambda \overline{\phi}(\lambda x) f_\mu(\lambda \phi(\lambda x)) - 2F_\mu(\lambda \phi(\lambda x)) dx \\
		&= \lambda^2 \|\nabla \phi\|^2_{L^2} - \lambda^{-2} \int \lambda \overline{\phi} f_\mu(\lambda \phi)  - 2F_\mu(\lambda \phi) dx.
		\end{align*}
		Similarly, 
		\[
		S_\mu(\phi_\lambda) = \frac{\lambda^2}{2} \|\nabla \phi\|^2_{L^2} +\frac{1}{2} \|\phi\|^2_{L^2} - \lambda^{-2} \int F_\mu(\lambda \phi) dx.
		\]
		We compute
		\[
		\partial_\lambda S_\mu(\phi_\lambda) = \lambda \|\nabla \phi\|^2_{L^2} + 2\lambda^{-3} \int F_\mu(\lambda \phi) - \lambda^{-2} \int \overline{\phi} f_\mu(\lambda \phi) dx.
		\]
		It follows that $\partial_\lambda S_\mu(\phi_\lambda) = \frac{1}{\lambda} I_\mu(\phi_\lambda)$. Using the fact
		\[
		\lambda \overline{\phi} f_\mu(\lambda \phi) - 2 F_\mu(\lambda \phi) = \frac{1}{4\pi} h_\mu(4\pi \lambda^2 |\phi|^2)
		\]
		with $h_\mu$ as in \eqref{defi-h-mu}, we write
		\[
		I_\mu(\phi_\lambda) = \lambda^2 \left( \|\nabla \phi\|^2_{L^2} - \lambda^{-4} \int \frac{1}{4\pi}h_\mu(4\pi \lambda^2 |\phi|^2) dx \right).
		\]
		We next write
		\[
		\lambda^{-4} h_\mu(4\pi \lambda^2 |\phi|^2) = 16 \pi^2 |\phi|^4 k_\mu(4\pi \lambda^2 |\phi|^2),
		\]
		where 
		\[
		k_\mu(s) := s^{-2} h_\mu(s) = s^{-1} e^s - s^{-2} e^s + s^{-2} -\frac{\mu}{2}.
		\]
		Note that if we set $l_\mu(s):= s^3 k'_\mu(s)$, then $l'_\mu(s) = s^2 e^s \geq 0$. Since $l_\mu(0)=0$, we get $l_\mu(s)\geq 0$ hence $k'_\mu(s) \geq 0$ for all $s\geq 0$. It follows that $k_\mu$ is a strictly increasing function on $(0,\infty)$. We infer that
		\begin{align} \label{defi-Phi}
		\lambda \mapsto \|\nabla \phi\|^2_{L^2} - \int 4\pi |\phi|^4 k_\mu(4\pi \lambda^2 |\phi|^2) dx =: \Phi_\mu(\lambda)
		\end{align}
		is strictly decreasing on $(0,\infty)$. A direct computation shows
		\[
		\Phi_\mu(0) := \lim_{\lambda \rightarrow 0} \Phi_\mu(\lambda) = \|\nabla \phi\|^2_{L^2} - 2\pi(1-\mu)\|\phi\|^4_{L^4}
		\]	
		and $\lim_{\lambda \rightarrow \infty} \Phi_\mu(\lambda) = -\infty$ since $k_\mu(s)\geq \frac{1}{3}s$ for all $s\geq 0$.	The other properties follow easily, and the proof is complete.
	\hfill $\Box$

	\begin{lemma} \label{lem-S-I}
		Let $\mu \in \{0,1\}$. It holds that
		\begin{align} \label{S-I}
		S_\mu(Q_\mu)= \inf \left\{ S_\mu(\phi) \ : \ \phi \in H^1 \backslash \{0\}, I_\mu(\phi)=0 \right\}.
		\end{align}
	\end{lemma}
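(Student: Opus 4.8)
The plan is to prove the two inequalities in \eqref{S-I} separately, writing $m_\mu := \inf\{S_\mu(\phi) : \phi\in H^1\setminus\{0\},\ I_\mu(\phi)=0\}$ for the right-hand side. The upper bound $m_\mu \le S_\mu(Q_\mu)$ is obtained by checking that the ground state itself is admissible for the constraint $\{I_\mu=0\}$; the reverse bound $m_\mu \ge S_\mu(Q_\mu)$ is obtained by transporting any competitor $\phi$ along the scaling $\phi_\lambda$ of Lemma~\ref{lem-I} until it crosses the already-understood Pohozaev manifold $\{P_\mu=0\}$, and then invoking \eqref{chara-P}.

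For $m_\mu \le S_\mu(Q_\mu)$, I would combine the Nehari identity $\|\nabla Q_\mu\|_{L^2}^2 + \|Q_\mu\|_{L^2}^2 = \int \overline{Q_\mu} f_\mu(Q_\mu)\,dx$ and the Pohozaev identity $\frac12\|Q_\mu\|_{L^2}^2 = \int F_\mu(Q_\mu)\,dx$ from Lemma~\ref{lem-pro-Q} to compute
\[
I_\mu(Q_\mu) = \|\nabla Q_\mu\|_{L^2}^2 - \int \overline{Q_\mu}f_\mu(Q_\mu) - 2F_\mu(Q_\mu)\,dx = \|\nabla Q_\mu\|_{L^2}^2 - \left(\|\nabla Q_\mu\|_{L^2}^2 + \|Q_\mu\|_{L^2}^2\right) + \|Q_\mu\|_{L^2}^2 = 0 .
\]
Thus $Q_\mu$ lies on the constraint set, and $m_\mu \le S_\mu(Q_\mu)$ is immediate.

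For the reverse inequality, fix any $\phi\in H^1\setminus\{0\}$ with $I_\mu(\phi)=0$ and recall $\phi_\lambda(x)=\lambda\phi(\lambda x)$. Since $I_\mu(\phi_\lambda)=\lambda^2\Phi_\mu(\lambda)$ with $\Phi_\mu$ strictly decreasing, the hypothesis $\Phi_\mu(1)=I_\mu(\phi)=0$ forces $\Phi_\mu(0)>\Phi_\mu(1)=0$, so the critical scale exists and equals $1$; by Lemma~\ref{lem-I} (see also Remark~\ref{rem-I}), $\lambda=1$ is then the unique critical point of $\lambda\mapsto S_\mu(\phi_\lambda)$ and a strict global maximum, whence $S_\mu(\phi)=\max_{\lambda>0}S_\mu(\phi_\lambda)$. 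Next I would locate a scale on the Pohozaev manifold: since the $L^2$-mass is invariant under this scaling, $P_\mu(\phi_\lambda)=\frac12\|\phi\|_{L^2}^2-\lambda^{-2}\int F_\mu(\lambda\phi)\,dx$ tends to $\frac12\|\phi\|_{L^2}^2>0$ as $\lambda\to0$ and to $-\infty$ as $\lambda\to\infty$ (the exponential term dominates, by Fatou), so the intermediate value theorem furnishes $\lambda_*>0$ with $P_\mu(\phi_{\lambda_*})=0$. Combining this with \eqref{chara-P} gives
\[
S_\mu(\phi)=\max_{\lambda>0}S_\mu(\phi_\lambda)\ge S_\mu(\phi_{\lambda_*})\ge \inf\left\{S_\mu(\psi): \psi\in H^1\setminus\{0\},\ P_\mu(\psi)=0\right\}=S_\mu(Q_\mu),
\]
and taking the infimum over all admissible $\phi$ yields $m_\mu\ge S_\mu(Q_\mu)$, completing the proof of \eqref{S-I}.

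The two algebraic identities and the endpoint limits of $P_\mu(\phi_\lambda)$ are routine. The conceptual heart, and the step I would handle most carefully, is the interplay between the two constraints through the \emph{single} scaling family $\phi_\lambda$: one must verify that the same curve realizing $S_\mu(\phi)$ as its strict maximum at $\lambda=1$ also meets the manifold $\{P_\mu=0\}$ at some $\lambda_*$, so that the variational value \eqref{chara-P} can be read off. A minor technical point worth justifying is the continuity together with the two limiting behaviours of $\lambda\mapsto P_\mu(\phi_\lambda)$ (dominated and monotone convergence applied to the exponential nonlinearity), which is exactly what guarantees the crossing $\lambda_*$ exists.
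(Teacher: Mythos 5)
Your proposal is correct and follows essentially the same route as the paper's proof: both sides use the scaling family $\phi_\lambda$ of Lemma \ref{lem-I} to see that $\lambda=1$ maximizes $S_\mu(\phi_\lambda)$ on the constraint $\{I_\mu=0\}$, then locate a scale $\lambda_*$ with $P_\mu(\phi_{\lambda_*})=0$ and invoke \eqref{chara-P}. The only (cosmetic) differences are that you verify $I_\mu(Q_\mu)=0$ explicitly via the Nehari and Pohozaev identities where the paper merely asserts it, and you handle the cases $P_\mu(\phi)=0$ and $P_\mu(\phi)\neq 0$ in one stroke via the intermediate value theorem rather than the paper's case split using the strict monotonicity of $\lambda\mapsto P_\mu(\phi_\lambda)$.
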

	
	\begin{proof}
		Denote 
		\[
		N_\mu:= \inf \left\{ S_\mu(\phi) \ : \ \phi \in H^1 \backslash \{0\}, I_\mu(\phi)=0\right\}.
		\]
		It is clear that $S_\mu(Q_\mu) \geq N_\mu$ since $I_\mu(Q_\mu)=0$. Let $\phi \in H^1 \backslash \{0\}$ be such that $I_\mu(\phi)=0$. It follows from Lemma $\ref{lem-I}$ that $\lambda_\mu=1$ and $S_\mu(\phi) \geq S_\mu(\phi_\lambda)$ for all $\lambda>0$. If $P_\mu(\phi)=0$, then $S_\mu(\phi) \geq S_\mu(Q_\mu)$ due to \eqref{chara-P}. If $P_\mu(\phi) \ne 0$, then we have from the fact that the map $\lambda \mapsto P_\mu(\phi_\lambda)$ is strictly decreasing on $(0,\infty)$, $\lim_{\lambda \rightarrow 0} P_\mu(\phi_\lambda)= \frac{1}{2} \|\phi\|^2_{L^2}>0$ and $\lim_{\lambda \rightarrow \infty} P_\mu(\phi_\lambda) = -\infty$ that there exists $\tilde{\lambda}_\mu>0$ such that $P_\mu(\phi_{\tilde{\lambda}_\mu})=0$. Thus, by \eqref{chara-P} and the fact $\lambda \mapsto S_\mu (\phi_\lambda)$ attains its maximum at $\lambda=1$, we see that $S_\mu(\phi) \geq S_\mu(\phi_{\tilde{\lambda}_\mu}) \geq S_\mu(Q_\mu)$. In both cases, we have $S_\mu(\phi) \geq S_\mu(Q_\mu)$. Taking the infimum over all $\phi \in H^1\backslash \{0\}$ satisfying $I_\mu(\phi)=0$, we get $N_\mu \geq S_\mu(Q_\mu)$ hence $N_\mu = S_\mu(Q_\mu)$. 
	\end{proof}

	%Let us recall the following bounds on $\|W\|^2_{L^2}$ (see \cite[Lemma 1]{GS}).
	%\begin{lemma} [\cite{GS}] \label{lem-bou-W}
	%Let $W$ be the unique positive radial solution to \eqref{ell-equ-W}. It holds that
	%\[
	%2\pi \leq \|W\|^2_{L^2} \leq 2\pi e \ln 2.
	%\]
	%\end{lemma}

	\begin{lemma} \label{lem-est-I}
		Let $\mu \in \{0,1\}$ and $\phi \in H^1$ be such that $E_\mu(\phi)\geq 0$. If $\phi \in \mathcal{K}^-_\mu$, then it holds that
		\begin{align} \label{est-I}
		I_\mu(\phi) \leq 2 \left(S_\mu(\phi) - S_\mu(Q_\mu)\right).
		\end{align}
	\end{lemma}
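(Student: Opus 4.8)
The plan is to exploit the scaling $\phi_\lambda(x)=\lambda\phi(\lambda x)$ together with the variational characterization \eqref{S-I} of $S_\mu(Q_\mu)$ through the functional $I_\mu$. First, since $\phi\in\mathcal{K}^-_\mu$ we have $I_\mu(\phi)<0$ and $S_\mu(\phi)<S_\mu(Q_\mu)$. Because $E_\mu(\phi)\geq 0$, Remark \ref{rem-I} guarantees $\Phi_\mu(0)>0$ (this is automatic when $\mu=1$ and uses the energy hypothesis when $\mu=0$). Since by Lemma \ref{lem-I} the map $\lambda\mapsto\Phi_\mu(\lambda)$ is strictly decreasing on $(0,\infty)$ with $\lim_{\lambda\to\infty}\Phi_\mu(\lambda)=-\infty$, there is a unique $\lambda_\mu>0$ with $\Phi_\mu(\lambda_\mu)=0$, i.e. $I_\mu(\phi_{\lambda_\mu})=\lambda_\mu^2\Phi_\mu(\lambda_\mu)=0$. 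Noting that $I_\mu(\phi)=I_\mu(\phi_1)=\Phi_\mu(1)<0=\Phi_\mu(\lambda_\mu)$ and invoking the strict monotonicity of $\Phi_\mu$, I would conclude $\lambda_\mu<1$.

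Next I would record that, since $\phi_{\lambda_\mu}\in H^1\setminus\{0\}$ satisfies $I_\mu(\phi_{\lambda_\mu})=0$, the characterization \eqref{S-I} immediately yields $S_\mu(\phi_{\lambda_\mu})\geq S_\mu(Q_\mu)$. The heart of the argument is then a monotonicity-in-$\lambda$ estimate. Consider $\Psi(\lambda):=2S_\mu(\phi_\lambda)-I_\mu(\phi_\lambda)$. Using the identities $\partial_\lambda S_\mu(\phi_\lambda)=\frac{1}{\lambda}I_\mu(\phi_\lambda)$ and $I_\mu(\phi_\lambda)=\lambda^2\Phi_\mu(\lambda)$ of Lemma \ref{lem-I}, so that $\partial_\lambda S_\mu(\phi_\lambda)=\lambda\Phi_\mu(\lambda)$, a direct differentiation gives $\Psi'(\lambda)=-\lambda^2\Phi_\mu'(\lambda)$. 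Since $\Phi_\mu$ is decreasing we have $\Phi_\mu'\leq 0$, hence $\Psi'\geq 0$ and $\Psi$ is non-decreasing on $(0,\infty)$.

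Finally, evaluating the monotone function $\Psi$ at the two points $\lambda_\mu<1$ gives $\Psi(1)\geq\Psi(\lambda_\mu)$, that is $2S_\mu(\phi)-I_\mu(\phi)\geq 2S_\mu(\phi_{\lambda_\mu})-0\geq 2S_\mu(Q_\mu)$, where the last step uses the bound from \eqref{S-I}. Rearranging yields exactly $I_\mu(\phi)\leq 2\left(S_\mu(\phi)-S_\mu(Q_\mu)\right)$, which is \eqref{est-I}. I expect the only delicate points to be the two places where the hypothesis $E_\mu(\phi)\geq 0$ is genuinely needed, namely to produce the scaling parameter $\lambda_\mu$ when $\mu=0$ via Remark \ref{rem-I}, and the verification that $\lambda_\mu<1$; both rest entirely on the strict monotonicity of $\Phi_\mu$ established in Lemma \ref{lem-I}, after which the computation of $\Psi'$ and the final rearrangement are routine.
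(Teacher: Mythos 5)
Your proof is correct and follows essentially the same route as the paper: produce $\lambda_\mu \in (0,1)$ with $I_\mu(\phi_{\lambda_\mu})=0$ using $E_\mu(\phi)\geq 0$ together with Remark \ref{rem-I} and the monotonicity of $\Phi_\mu$, then integrate the differential inequality $\partial_\lambda I_\mu(\phi_\lambda) \leq 2\,\partial_\lambda S_\mu(\phi_\lambda)$ from $\lambda_\mu$ to $1$ and invoke \eqref{S-I} to bound $S_\mu(\phi_{\lambda_\mu})\geq S_\mu(Q_\mu)$. The only (harmless) difference is that you obtain this inequality in the form $\Psi'(\lambda)=-\lambda^2\Phi_\mu'(\lambda)\geq 0$ directly from the monotonicity of $\Phi_\mu$ already proved in Lemma \ref{lem-I}, whereas the paper re-derives it by the explicit computation $m_\mu(s)=h_\mu(s)-\frac{s}{2}h_\mu'(s)\leq 0$; these are the same fact, since $s^3 k_\mu'(s)=-2m_\mu(s)$.
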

	
	\begin{proof}
		Let $\phi_\lambda$ be as in Lemma $\ref{lem-I}$. 
		\[
		S_\mu(\phi_\lambda) = \frac{\lambda^2}{2} \|\nabla \phi\|^2_{L^2} + \frac{1}{2}\|\phi\|^2_{L^2} - \lambda^{-2} \int F_\mu(\lambda \phi) dx.
		\]
		We have
		\begin{align} \label{deri-S}
		\partial_\lambda S_\mu(\phi_\lambda) = \lambda\|\nabla \phi\|^2_{L^2} - \lambda^{-3} \int \lambda \overline{\phi} f_\mu(\lambda \phi) - 2 F_\mu(\lambda \phi) dx = \frac{I_\mu(\phi_\lambda)}{\lambda}.
		\end{align}
		We also have
		\begin{align*}
		\partial_\lambda I_\mu(\phi_\lambda) &= 2\lambda \|\nabla \phi\|^2_{L^2} + 2 \lambda^{-3} \int \lambda \overline{\phi} f_\mu(\lambda \phi) - 2F_\mu(\lambda\phi) dx \\
		&\mathrel{\phantom{=2\lambda \|\nabla \phi\|^2_{L^2}}} - \lambda^{-2} \int \partial_\lambda[\lambda \overline{\phi} f_\mu(\lambda \phi) - 2F_\mu(\lambda \phi)] dx \\
		&=2\left(\lambda\|\nabla \phi\|^2_{L^2} - \lambda^{-3} \int \lambda \overline{\phi} f_\mu(\lambda \phi) - 2 F_\mu(\lambda \phi) dx \right) \\
		&\mathrel{\phantom{=}} + 4 \lambda^{-3} \int \lambda \overline{\phi} f_\mu(\lambda \phi) - 2F_\mu(\lambda \phi) dx - \lambda^{-2} \int \partial_\lambda[\lambda \overline{\phi} f_\mu(\lambda \phi) - 2F_\mu(\lambda \phi)] dx \\
		&= 2 \partial_\lambda S_\mu(\phi_\lambda) +  \left[ 4 \lambda^{-3} \int \lambda \overline{\phi} f_\mu(\lambda \phi) - 2F_\mu(\lambda \phi) dx - \lambda^{-2} \int \partial_\lambda[\lambda \overline{\phi} f_\mu(\lambda \phi) - 2F_\mu(\lambda \phi)] dx \right].
		\end{align*}
		A direct computation shows that the term inside the square bracket becomes
		\[
		\pi^{-1} \lambda^{-3} \int m_\mu(4\pi \lambda^2 |\phi|^2) dx,
		\]
		where
		\[
		m_\mu(s):= h_\mu(s) - \frac{s}{2} h'_\mu(s) = se^s - e^s - \frac{s^2}{2} e^s +1
		\]
		with $h_\mu$ as in \eqref{defi-h-mu}. Note that
		\[
		m'_\mu(s) = -\frac{s^2}{2}e^s \leq 0
		\]
		for all $s\geq 0$ which implies $m_\mu(s) \leq m_\mu(0) =0$ for all $s\geq 0$. It follows that
		\begin{align} \label{est-I-mu}
		\partial_\lambda I_\mu(\phi_\lambda) \leq 2 \partial_\lambda S_\mu(\phi_\lambda), \quad  \forall \lambda>0.
		\end{align}
		Since $I_\mu(\phi) <0$, we see that $\Phi_\mu(1)<0$ (see \eqref{defi-Phi}). On the other hand, $\Phi_\mu(0)>0$ by Remark $\ref{rem-I}$. Thus there exists $\tilde{\lambda}_\mu \in (0,1)$ such that $\Phi_\mu(\tilde{\lambda}_\mu) =0$ or $I_\mu (\phi_{\tilde{\lambda}_\mu}) =0$. Integrating \eqref{est-I-mu} over the interval $(\tilde{\lambda}_\mu,1)$, we obtain
		\[
		I_\mu(\phi) = I_\mu(\phi) - I_\mu(\phi_{\tilde{\lambda}_\mu}) \leq 2 \left(S_\mu(\phi) - S_\mu(\phi_{\tilde{\lambda}_\mu})\right) \leq 2 \left(S_\mu(\phi) - S_\mu(Q_\mu)\right).
		\]
		Here we have used the fact $S_\mu(\phi_{\tilde{\lambda}_\mu}) \geq S_\mu(Q_\mu)$ which follows from \eqref{S-I} and $I_\mu(\phi_{\tilde{\lambda}_\mu})=0$. The proof is complete.
	\end{proof}
	
	\begin{lemma} \label{lem-blowup-Sigma}
		Let $\mu \in \{0,1\}$. Let $u_0 \in H^1$ be such that $\|\nabla u_0\|_{L^2} <1$ and $E_\mu(u_0)\geq 0$. If $u_0 \in \mathcal{K}^-_\mu$ and $u_0 \in \Sigma:= H^1 \cap L^2(|x|^2 dx)$, then the corresponding solution to \eqref{NLS} blows up in finite time.
	\end{lemma}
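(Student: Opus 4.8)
The plan is to run the classical convexity (virial) argument of Glassey, using the finite-variance identity \eqref{virial}, and to feed it the uniform-in-time sign information on $I_\mu$ coming from Lemma $\ref{lem-est-I}$. Since $u_0\in\Sigma=H^1\cap L^2(|x|^2\,dx)$, the local theory guarantees that the solution remains in $\Sigma$ on $[0,T^*)$, so that $V(t):=\|xu(t)\|^2_{L^2}$ is well defined, of class $C^2$, nonnegative, and satisfies
\[
\frac{d^2}{dt^2}V(t) = 8\,I_\mu(u(t)), \qquad t\in[0,T^*).
\]
The whole point is therefore to show that $I_\mu(u(t))$ stays below a strictly negative constant for all $t$.

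First I would invoke the invariance of $\mathcal{K}^-_\mu$ under the flow: since $u_0\in\mathcal{K}^-_\mu$, we have $u(t)\in\mathcal{K}^-_\mu$ for every $t\in[0,T^*)$, so in particular $I_\mu(u(t))<0$ and $S_\mu(u(t))<S_\mu(Q_\mu)$. Next, by conservation of energy $E_\mu(u(t))=E_\mu(u_0)\geq 0$, so the hypotheses of Lemma $\ref{lem-est-I}$ are met at each time $t$, giving
\[
I_\mu(u(t)) \leq 2\bigl(S_\mu(u(t)) - S_\mu(Q_\mu)\bigr), \qquad t\in[0,T^*).
\]
Now I would use that $S_\mu=E_\mu+\tfrac12 M$ is a conserved quantity, so $S_\mu(u(t))=S_\mu(u_0)$, together with the fact that $u_0\in\mathcal{K}^-_\mu$ forces $S_\mu(u_0)<S_\mu(Q_\mu)$. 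Setting $\delta:=2\bigl(S_\mu(Q_\mu)-S_\mu(u_0)\bigr)>0$, this yields the uniform bound
\[
I_\mu(u(t)) \leq 2\bigl(S_\mu(u_0)-S_\mu(Q_\mu)\bigr) = -\delta < 0, \qquad t\in[0,T^*).
\]

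Combining this with \eqref{virial} gives $V''(t)\leq -8\delta<0$ on $[0,T^*)$. To conclude I would argue by contradiction: if $T^*=+\infty$, then integrating the inequality $V''\leq -8\delta$ twice over $[0,\infty)$ shows $V(t)\to-\infty$, which is impossible since $V(t)=\|xu(t)\|^2_{L^2}\geq 0$. Hence $T^*<+\infty$, i.e.\ the solution blows up in finite time. The genuinely substantive work has already been carried out in Lemma $\ref{lem-est-I}$, so here there is no real analytic obstacle; the only point demanding care is the bookkeeping that assembles the \emph{three} ingredients — invariance of $\mathcal{K}^-_\mu$, conservation of $E_\mu$ (to license Lemma $\ref{lem-est-I}$ at every time), and conservation of $S_\mu$ (to turn the time-dependent right-hand side of \eqref{est-I} into a fixed negative constant) — into the \emph{uniform} strict negativity of $I_\mu(u(t))$, which is precisely what the Glassey convexity argument requires, as opposed to the merely pointwise negativity furnished by membership in $\mathcal{K}^-_\mu$ alone.
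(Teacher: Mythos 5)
Your proposal is correct and follows essentially the same route as the paper: invariance of $\mathcal{K}^-_\mu$ under the flow, conservation of energy to license Lemma \ref{lem-est-I} at every time, conservation of mass and energy (hence of $S_\mu$) to convert \eqref{est-I} into the uniform bound $I_\mu(u(t))\leq 2(S_\mu(u_0)-S_\mu(Q_\mu))<0$, and then the Glassey convexity argument via \eqref{virial}. The only difference is that you spell out the final contradiction ($V\geq 0$ versus $V''\leq -8\delta$) which the paper leaves as the "standard argument," so there is nothing to correct.
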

	
	\begin{proof}
		Since $\mathcal{K}^-_\mu$ is invariant under the flow of \eqref{NLS}, we have $u(t) \in \mathcal{K}^-_\mu$ for all $t \in [0,T^*)$. Moreover, by the conservation of energy, $E_\mu(u(t)) = E_\mu(u_0) \geq 0$ for all $t\in [0,T^*)$. By \eqref{virial} and Lemma $\ref{lem-est-I}$,
		\[
		\frac{d^2}{dt^2} \|xu(t)\|^2_{L^2} =8 I_\mu(u(t)) \leq 16 \left( S_\mu(u(t)) - S_\mu(Q_\mu)\right) = 16 \left(S_\mu(u_0) - S_\mu(Q_\mu)\right) <0
		\]
		for all $t\in [0,T^*)$. This shows that the solution must blow up in finite time.
	\end{proof}
		
	\begin{lemma} \label{lem-blowup-rad}
		Let $\mu \in \{0,1\}$. Let $u_0 \in H^1$ be such that $\|\nabla u_0\|_{L^2} <1$ and $E_\mu(u_0)\geq 0$. If $u_0 \in \mathcal{K}^-_\mu$ and $u_0$ is radially symmetric, then the corresponding solution to \eqref{NLS} blows up in finite time.
	\end{lemma}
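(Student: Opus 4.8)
The plan is to mirror the finite-variance argument of Lemma \ref{lem-blowup-Sigma}, replacing the exact virial identity \eqref{virial} by the localized radial estimate of Lemma \ref{lem-viri-est}. The starting point is that $\mathcal{K}^-_\mu$ is invariant under the flow of \eqref{NLS}, so $u(t) \in \mathcal{K}^-_\mu$ for every $t \in [0,T^*)$; in particular $S_\mu(u(t)) < S_\mu(Q_\mu)$ and $I_\mu(u(t)) < 0$. Moreover, conservation of mass and energy gives $E_\mu(u(t)) = E_\mu(u_0) \geq 0$ and $S_\mu(u(t)) = S_\mu(u_0)$ for all $t$. Note also that $\|\nabla u_0\|_{L^2}<1$ and radial symmetry are preserved, so Lemma \ref{lem-viri-est} is applicable along the whole trajectory.

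The key quantitative input is Lemma \ref{lem-est-I}: since $E_\mu(u(t)) \geq 0$ and $u(t) \in \mathcal{K}^-_\mu$, we have
\[
I_\mu(u(t)) \leq 2\bigl(S_\mu(u(t)) - S_\mu(Q_\mu)\bigr) = 2\bigl(S_\mu(u_0) - S_\mu(Q_\mu)\bigr) =: -2\delta,
\]
with $\delta := S_\mu(Q_\mu) - S_\mu(u_0) > 0$ because $u_0 \in \mathcal{K}^-_\mu$. The crucial point, and the step I expect to be the main obstacle, is that this bound is \emph{uniform} in $t$: since $S_\mu$ is conserved along the flow, the right-hand side does not depend on $t$, so $I_\mu(u(t))$ stays below the fixed negative constant $-2\delta$ for all $t \in [0,T^*)$. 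Without this time-uniformity the localization error below could not be absorbed by a single choice of $R$.

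With this uniform bound in hand I would feed it into Lemma \ref{lem-viri-est}, which yields, for all $t \in [0,T^*)$,
\[
\frac{d^2}{dt^2} V_{\varphi_R}(t) \leq 8 I_\mu(u(t)) + CR^{-2} + C\bigl(e^{CR^{-1}} - 1\bigr) \leq -16\delta + CR^{-2} + C\bigl(e^{CR^{-1}} - 1\bigr),
\]
with $C$ independent of $R$. Both error terms tend to $0$ as $R \to \infty$ and are independent of $t$, so I can fix $R$ once and for all (this is precisely where the time-uniformity of the previous step enters) so large that $CR^{-2} + C(e^{CR^{-1}} - 1) \leq 8\delta$, giving
\[
\frac{d^2}{dt^2} V_{\varphi_R}(t) \leq -8\delta < 0 \quad \text{for all } t \in [0,T^*).
\]

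Finally I would invoke the standard convexity argument of Glassey. Since $\varphi_R \geq 0$ we have $V_{\varphi_R}(t) \geq 0$, while integrating the estimate above twice forces
\[
V_{\varphi_R}(t) \leq V_{\varphi_R}(0) + V'_{\varphi_R}(0)\, t - 4\delta\, t^2,
\]
a concave quadratic in $t$ that must become negative in finite time. This contradicts $V_{\varphi_R} \geq 0$ unless $T^* < \infty$, which is exactly the asserted finite time blow-up.
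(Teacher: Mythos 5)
Your proof is correct and follows essentially the same route as the paper's: invariance of $\mathcal{K}^-_\mu$ plus conservation laws, the uniform bound $I_\mu(u(t)) \leq 2(S_\mu(u_0)-S_\mu(Q_\mu))<0$ from Lemma \ref{lem-est-I}, the localized radial virial estimate of Lemma \ref{lem-viri-est} with $R$ fixed large to absorb the error terms, and Glassey's convexity argument. The only difference is that you spell out the final concavity step, which the paper leaves implicit.
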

	\begin{proof}
		Applying Lemma $\ref{lem-viri-est}$, we get for any $t\in [0,T^*)$,
		\[
		\frac{d^2}{dt^2} V_{\varphi_R}(t) \leq 8 I_\mu(u(t)) + CR^{-2} +C\left(e^{C R^{-1}} -1\right)
		\]
		for some $C>0$ independent of $R$. Since $\mathcal{K}^-_\mu$ is invariant under the flow of \eqref{NLS}, we have that $u(t) \in \mathcal{K}^-_\mu$ for all $t\in [0,T^*)$. Moreover, by the conservation of energy, $E_\mu(u(t)) = E_\mu(u_0)\geq 0$ for all $t\in [0,T^*)$. We thus apply Lemma $\ref{lem-est-I}$ and the conservation of mass and energy to get
		\begin{align*}
		\frac{d^2}{dt^2} V_{\varphi_R}(t) &\leq 16 (S_\mu(u(t)) - S_\mu(Q_\mu)) + CR^{-2} +C\left(e^{C R^{-1}} -1\right) \\
		&= 16 (S_\mu(u_0) - S_\mu(Q_\mu)) + CR^{-2} +C\left(e^{C R^{-1}} -1\right)
		\end{align*}
		for all $t\in [0,T^*)$. By taking $R>0$ sufficiently large, we obtain
		\[
		\frac{d^2}{dt^2} V_{\varphi_R}(t) \leq 8 (S_\mu(u_0) - S_\mu(Q_\mu))<0
		\]
		for all $t\in [0,T^*)$. This implies that the solution blows up in finite time.
	\end{proof}
	
	\begin{lemma} \label{lem-posi-ener}
		Let $\mu \in \{0,1\}$. If $\phi \in \mathcal{A}^+_\mu$, then $E_\mu(\phi) \geq 0$.
	\end{lemma}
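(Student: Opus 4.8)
The plan is to reduce the statement to a single positivity property of the virial functional $I_\mu$ and then establish that property by a scaling argument, thereby avoiding any appeal to the (un-stated) equivalence $\mathcal{A}^-_\mu \equiv \mathcal{K}^-_\mu$.

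First I would observe that $E_\mu$ dominates $\tfrac12 I_\mu$. Indeed, from the very definition $I_\mu(\phi) = 2E_\mu(\phi) - \int \overline{\phi} f_\mu(\phi) - 4F_\mu(\phi)\,dx$ together with the pointwise positivity \eqref{posi-nonli}, one rewrites $E_\mu(\phi) = \tfrac12 I_\mu(\phi) + \tfrac12 \int \overline{\phi} f_\mu(\phi) - 4F_\mu(\phi)\,dx \geq \tfrac12 I_\mu(\phi)$. Hence it suffices to prove that $I_\mu(\phi) \geq 0$ whenever $\phi \in \mathcal{A}^+_\mu$, and the whole lemma follows at once.

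To show $I_\mu(\phi) \geq 0$ I would argue by contradiction, exploiting the $L^2$-invariant scaling $\phi_\lambda(x) = \lambda\phi(\lambda x)$ from Lemma $\ref{lem-I}$. Since $\phi \in \mathcal{A}^+_\mu$ we have $P_\mu(\phi) > 0$; because $\lambda \mapsto P_\mu(\phi_\lambda)$ is strictly decreasing on $(0,\infty)$ with limit $\tfrac12\|\phi\|^2_{L^2} > 0$ as $\lambda \to 0$ and limit $-\infty$ as $\lambda \to \infty$ (established in the proof of Lemma $\ref{lem-S-I}$), there is a unique $\lambda^* > 0$ with $P_\mu(\phi_{\lambda^*}) = 0$, and the monotonicity forces $\lambda^* > 1$. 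By the variational characterization \eqref{chara-P}, vanishing of $P_\mu$ at $\phi_{\lambda^*}$ yields $S_\mu(\phi_{\lambda^*}) \geq S_\mu(Q_\mu)$. Now suppose toward a contradiction that $I_\mu(\phi) < 0$. Writing $I_\mu(\phi_\lambda) = \lambda^2\Phi_\mu(\lambda)$ with $\Phi_\mu$ strictly decreasing (Lemma $\ref{lem-I}$) and using $I_\mu(\phi) = \Phi_\mu(1) < 0$, I get $\Phi_\mu(\lambda) < 0$, hence $I_\mu(\phi_\lambda) < 0$, for all $\lambda \geq 1$. Since $\partial_\lambda S_\mu(\phi_\lambda) = \lambda^{-1} I_\mu(\phi_\lambda)$, the map $\lambda \mapsto S_\mu(\phi_\lambda)$ is strictly decreasing on $[1,\infty)$, so $S_\mu(\phi_{\lambda^*}) < S_\mu(\phi_1) = S_\mu(\phi) < S_\mu(Q_\mu)$, contradicting $S_\mu(\phi_{\lambda^*}) \geq S_\mu(Q_\mu)$. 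Therefore $I_\mu(\phi) \geq 0$, and combined with the first step $E_\mu(\phi) \geq \tfrac12 I_\mu(\phi) \geq 0$.

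The main subtlety to watch for is the case $\mu = 0$. One cannot base the scaling on the behaviour as $\lambda \to 0$, since $\Phi_0(0) = \|\nabla\phi\|^2_{L^2} - 2\pi\|\phi\|^4_{L^4}$ need not be positive, and asserting its positivity would, by Remark $\ref{rem-I}$, amount to assuming the very conclusion $E_0(\phi) \geq 0$. Routing the argument through the monotone quantity $P_\mu(\phi_\lambda)$ and the point $\lambda^* > 1$ (rather than through $\Phi_\mu(0)$) sidesteps this circularity and handles $\mu = 0$ and $\mu = 1$ uniformly.
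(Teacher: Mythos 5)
Your proof is correct, and it takes a genuinely different route from the paper's. The paper argues dynamically and topologically: by Theorem \ref{theo-gwp} every datum in $\mathcal{A}^+_\mu$ generates a global solution, while by Lemma \ref{lem-blow-nega-Sig} every finite-variance datum with $\|\nabla u_0\|_{L^2}<1$ and negative energy blows up in finite time, so $\mathcal{A}^+_\mu \cap \{\phi \in H^1\backslash\{0\} \ : \ E_\mu(\phi)<0\} \cap \Sigma = \emptyset$; the lemma then follows from the openness in $H^1$ of $\mathcal{A}^+_\mu$ and of the negative-energy set, together with the density of $\Sigma$ in $H^1$. You instead stay entirely inside the variational toolkit: you reduce the claim to $I_\mu(\phi)\geq 0$ via the identity $E_\mu(\phi)=\tfrac12 I_\mu(\phi)+\tfrac12\int \overline{\phi}f_\mu(\phi)-4F_\mu(\phi)\,dx$ and the positivity \eqref{posi-nonli}, and then rule out $I_\mu(\phi)<0$ by scaling: the unique zero $\lambda^*>1$ of the strictly decreasing map $\lambda\mapsto P_\mu(\phi_\lambda)$ forces $S_\mu(\phi_{\lambda^*})\geq S_\mu(Q_\mu)$ by \eqref{chara-P}, while Lemma \ref{lem-I} (monotonicity of $\Phi_\mu$ together with $\partial_\lambda S_\mu(\phi_\lambda)=\lambda^{-1}I_\mu(\phi_\lambda)$) makes $\lambda\mapsto S_\mu(\phi_\lambda)$ strictly decreasing on $[1,\lambda^*]$, giving $S_\mu(\phi_{\lambda^*})<S_\mu(\phi)<S_\mu(Q_\mu)$, a contradiction. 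Every ingredient you invoke is established before and independently of this lemma, so there is no circularity; in particular, your choice to route through $P_\mu(\phi_\lambda)$, whose zero crossing is unconditional, rather than through $\Phi_\mu(0)>0$, which by Remark \ref{rem-I} is essentially the conclusion $E_0(\phi)\geq 0$ in the case $\mu=0$, is exactly the right point of care. As for what each approach buys: the paper's proof is a two-line corollary of results already in place, but it leans on the well-posedness and blow-up machinery and on topological facts (openness of $\mathcal{A}^+_\mu$ and of the negative-energy set, hence continuity of the exponential functionals on $H^1$) that are asserted rather than proved; your proof needs no PDE dynamics at all, and in passing it establishes the disjointness $\mathcal{A}^+_\mu\cap\mathcal{K}^-_\mu=\emptyset$, i.e. one half of Lemma \ref{lem-equi-A-K}, by purely variational means, whereas the paper obtains that inclusion only through the dynamics and through this very lemma.
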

	
	\begin{proof}
		We firstly note that 
		\[
		\mathcal{A}^+_\mu \cap \left\{ \phi \in H^1 \backslash \{0\} \ : \ E_\mu(\phi)<0\right\} \cap \Sigma = \emptyset
		\]
		due to Theorem $\ref{theo-gwp}$ and Lemma $\ref{lem-blow-nega-Sig}$. The result follows by noticing that $\mathcal{A}^+_\mu$ and $\{ \phi \in H^1 \backslash \{0\} \ : \ E_\mu(\phi)<0\}$ are open sets of $H^1$ and $\Sigma$ is dense in $H^1$.
	\end{proof}

	We also have the following equivalence of invariant sets.
	\begin{lemma} \label{lem-equi-A-K}
		Let $\mu \in \{0,1\}$. It holds that $\mathcal{A}^+_\mu \equiv \mathcal{K}^+_\mu$ and $\mathcal{A}^-_\mu \equiv \mathcal{K}^-_\mu$.
	\end{lemma}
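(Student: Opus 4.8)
The plan is to leverage a structural observation that makes the whole statement a matter of a single sign comparison. By \eqref{union-A} and \eqref{union}, both $\{\mathcal{A}^+_\mu,\mathcal{A}^-_\mu\}$ and $\{\mathcal{K}^+_\mu,\mathcal{K}^-_\mu\}$ are partitions of the \emph{same} set $X:=\{\phi\in H^1\setminus\{0\}:S_\mu(\phi)<S_\mu(Q_\mu)\}$, the first according to the sign of $P_\mu$ and the second according to the sign of $I_\mu$. Hence it suffices to show that for every $\phi\in X$ the quantities $P_\mu(\phi)$ and $I_\mu(\phi)$ carry the same (necessarily nonzero) sign; this immediately forces $\mathcal{A}^+_\mu=\mathcal{K}^+_\mu$ and $\mathcal{A}^-_\mu=\mathcal{K}^-_\mu$. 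I would first record that neither functional vanishes on $X$: if $P_\mu(\phi)=0$ then $S_\mu(\phi)\geq S_\mu(Q_\mu)$ by \eqref{chara-P}, and if $I_\mu(\phi)=0$ then $S_\mu(\phi)\geq S_\mu(Q_\mu)$ by \eqref{S-I}, each contradicting $\phi\in X$.

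The core tool will be the scaling $\phi_\lambda(x)=\lambda\phi(\lambda x)$ of Lemma \ref{lem-I}. Since $\partial_\lambda S_\mu(\phi_\lambda)=\tfrac1\lambda I_\mu(\phi_\lambda)=\lambda\Phi_\mu(\lambda)$ with $\Phi_\mu$ strictly decreasing and $\Phi_\mu(\lambda)\to-\infty$, the map $\lambda\mapsto S_\mu(\phi_\lambda)$ is \emph{unimodal}: strictly increasing on $(0,\lambda^\ast)$ and strictly decreasing on $(\lambda^\ast,\infty)$ for a unique maximiser $\lambda^\ast\in[0,\infty)$ (the increasing branch being empty, $\lambda^\ast=0$, exactly when $\Phi_\mu(0^+)\leq0$). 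Consequently the sign of $I_\mu(\phi)=\Phi_\mu(1)$ records whether $1<\lambda^\ast$ (then $I_\mu(\phi)>0$) or $1>\lambda^\ast$ (then $I_\mu(\phi)<0$). On the other hand, as established in the proof of Lemma \ref{lem-S-I}, $\lambda\mapsto P_\mu(\phi_\lambda)$ is strictly decreasing from $\tfrac12\|\phi\|_{L^2}^2>0$ to $-\infty$, hence has a unique zero $\tilde\lambda$, and the sign of $P_\mu(\phi)$ records whether $1<\tilde\lambda$ or $1>\tilde\lambda$.

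The bridge between the two indicators is the single chain
\[
S_\mu(\phi_1)=S_\mu(\phi)<S_\mu(Q_\mu)\leq S_\mu(\phi_{\tilde\lambda}),
\]
whose last inequality uses $P_\mu(\phi_{\tilde\lambda})=0$, so that $S_\mu(\phi_{\tilde\lambda})=\tfrac12\|\nabla\phi_{\tilde\lambda}\|_{L^2}^2\geq S_\mu(Q_\mu)$ by \eqref{chara-P}. With $S_\mu(\phi_1)<S_\mu(\phi_{\tilde\lambda})$ in hand, unimodality pins down the position of $1$ relative to $\lambda^\ast$: if $1<\tilde\lambda$ (i.e. $P_\mu(\phi)>0$) but $1\geq\lambda^\ast$, then $\lambda^\ast\leq1<\tilde\lambda$ places both points in the strictly decreasing branch, giving $S_\mu(\phi_1)>S_\mu(\phi_{\tilde\lambda})$, a contradiction; hence $1<\lambda^\ast$ and $I_\mu(\phi)>0$. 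Symmetrically, if $1>\tilde\lambda$ (i.e. $P_\mu(\phi)<0$) but $1\leq\lambda^\ast$, then $\tilde\lambda<1\leq\lambda^\ast$ places both points in the strictly increasing branch, giving $S_\mu(\phi_{\tilde\lambda})<S_\mu(\phi_1)$, again a contradiction; hence $1>\lambda^\ast$ and $I_\mu(\phi)<0$. In either case the signs of $P_\mu(\phi)$ and $I_\mu(\phi)$ agree, which is exactly what is needed.

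I expect the only delicate point to be the bookkeeping of the degenerate configuration in which the increasing branch of $S_\mu(\phi_\lambda)$ is empty (i.e. $\Phi_\mu(0^+)\leq0$, which by Remark \ref{rem-I} can occur only for $\mu=0$ and only when $E_0(\phi)<0$); there one sets $\lambda^\ast=0$, so that $S_\mu(\phi_\lambda)$ is strictly decreasing on all of $(0,\infty)$, and the comparison above still rules out $P_\mu(\phi)>0$ while confirming $I_\mu(\phi)<0$, so the sign match persists. Notably, this unified scaling argument bypasses any case split between $\mu=0$ and $\mu=1$ and does not require Lemma \ref{lem-posi-ener}; everything else reduces to the monotonicity statements already proved in Lemma \ref{lem-I} and Lemma \ref{lem-S-I} together with the variational characterisation \eqref{chara-P}.
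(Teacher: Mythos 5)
Your proof is correct, and half of it takes a genuinely different route from the paper. Your Case 2 --- ruling out $P_\mu(\phi)<0$ together with $I_\mu(\phi)>0$ via the chain $S_\mu(Q_\mu)\leq S_\mu(\phi_{\tilde\lambda})\leq S_\mu(\phi)<S_\mu(Q_\mu)$ --- is exactly the paper's proof of the inclusion $\mathcal{K}^+_\mu\subset\mathcal{A}^+_\mu$: same scaling, same two points $\tilde\lambda<1<\lambda^\ast$ in the increasing branch, same appeal to \eqref{chara-P}. The genuine difference is the reverse inclusion $\mathcal{A}^+_\mu\subset\mathcal{K}^+_\mu$: the paper obtains it dynamically, showing $\mathcal{A}^+_\mu\cap\mathcal{K}^-_\mu\cap\Sigma=\emptyset$ by playing global existence (Theorem \ref{theo-gwp}) against finite-time blow-up (Lemma \ref{lem-blowup-Sigma}, which requires $E_\mu\geq 0$ and hence Lemma \ref{lem-posi-ener}), and then removing the finite-variance restriction through openness of the two sets and density of $\Sigma$ in $H^1$. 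You instead run the mirror-image variational argument (your Case 1): $P_\mu(\phi)>0$ and $I_\mu(\phi)<0$ force $\lambda^\ast\leq 1<\tilde\lambda$, placing both points in the strictly decreasing branch, whence $S_\mu(\phi)>S_\mu(\phi_{\tilde\lambda})\geq S_\mu(Q_\mu)$, contradicting $\phi\in X$; and your bookkeeping of the degenerate branch $\Phi_\mu(0^+)\leq 0$ (possible only for $\mu=0$, by Remark \ref{rem-I}) is handled correctly by the same comparison. The monotonicity inputs you rely on --- Lemma \ref{lem-I} for $\Phi_\mu$, and the strict decrease of $\lambda\mapsto P_\mu(\phi_\lambda)$ from $\tfrac12\|\phi\|^2_{L^2}$ to $-\infty$ recorded in the proof of Lemma \ref{lem-S-I} --- are all available in the paper, so your argument is complete. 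What your route buys: the equivalence $\mathcal{A}^\pm_\mu\equiv\mathcal{K}^\pm_\mu$ is established purely at the variational level, with no well-posedness theory, no virial/blow-up input, and no density-plus-openness argument, so the lemma is logically decoupled from the dynamics it later feeds into (in particular, Lemma \ref{lem-posi-ener} and Lemma \ref{lem-example} could then cite it without any risk of circularity concerns). What the paper's route buys is economy of exposition: the dynamical ingredients are needed anyway for Theorem \ref{theo-blowup}, so reusing them costs nothing, and the paper never has to track the unimodal profile of $\lambda\mapsto S_\mu(\phi_\lambda)$ (including the $\lambda^\ast=0$ case) that your symmetric argument must make explicit.
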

	
	\begin{proof}
		Thanks to \eqref{union}, it suffices to show $\mathcal{K}^+_\mu \equiv \mathcal{A}^+_\mu$. 
		Let us prove the first inclusion $\mathcal{K}^+_\mu \subset \mathcal{A}^+_\mu$. Let $\phi \in \mathcal{K}^+_\mu$, i.e. $S_\mu(\phi) <S_\mu(Q_\mu)$ and $I_\mu(\phi)>0$. We need to show that $P_\mu(\phi)>0$. Assume by contradiction that $P_\mu(\phi)<0$ (note that $S_\mu(\phi)<S_\mu(Q_\mu)$ and $P_\mu(\phi)=0$ are not compatible). Since the function $\lambda \mapsto \Phi_\mu(\lambda)$ (see \eqref{defi-Phi}) is strictly decreasing on $(0,\infty)$ and $\lim_{\lambda \rightarrow \infty} \Phi_\mu(\lambda)=-\infty$, we have from the fact $\Phi_\mu(1) = I_\mu(\phi) >0$ that there exists $\lambda_\mu >1$ such that $\Phi_\mu(\lambda_\mu)=0$ or $I_\mu(\phi_{\lambda_\mu})=0$. It follows that
		\[
		\left\{
		\begin{array}{rcl}
		\partial_\lambda S_\mu(\phi_\lambda) >0 &\text{if} & 0<\lambda<\lambda_\mu, \\
		\partial_\lambda S_\mu(\phi_\lambda) <0 &\text{if} & \lambda_\mu<\lambda <\infty.
		\end{array}
		\right.
		\]
		Since $P_\mu(\phi)<0$, we have from the fact $\lim_{\lambda \rightarrow 0} P_\mu(\phi_\lambda) = \frac{1}{2} \|\phi\|^2_{L^2}>0$ that there exists $\tilde{\lambda}_\mu \in (0,1)$ such that $P_\mu(\phi_{\tilde{\lambda}_\mu})=0$, hence by \eqref{chara-P}, $S_\mu(\phi_{\tilde{\lambda}_\mu}) \geq S_\mu(Q_\mu)$. It however contradicts $S_\mu(\phi_{\tilde{\lambda}_\mu}) \leq S_\mu(\phi) < S_\mu(Q_\mu)$. Hence $P_\mu(\phi)>0$ and $\phi \in \mathcal{A}^+_\mu$. 
		
		We next prove $\mathcal{A}^+_\mu \subset \mathcal{K}^+_\mu$. It follows by using \eqref{union} and the fact
		\[
		\mathcal{A}^+_\mu \cap \mathcal{K}^-_\mu \cap \Sigma =\emptyset
		\]
		which follows from Theorem $\ref{theo-gwp}$, Lemma $\ref{lem-blowup-Sigma}$ and Lemma $\ref{lem-posi-ener}$.
		%Take $\phi \in \mathcal{A}^+_\mu$, i.e. $S_\mu(\phi) <S_\mu(Q_\mu)$ and $P_\mu(\phi)>0$. We will show that $I_\mu(\phi)>0$. Assume by contradiction that $I_\mu(\phi)<0$. By the same argument as in the proof of Lemma $\ref{lem-posi-ener}$, we see that  
		%\[
		%\int \overline{\phi}_M f_\mu(\phi_M) - \overline{\phi} f_\mu(\phi) dx \rightarrow 0 \quad \text{as } M\rightarrow \infty.
		%\]
		%This implies that $I_\mu(\phi_M) \rightarrow I_\mu(\phi)$ as $M\rightarrow \infty$. We thus obtain for $M$ sufficiently large, $\phi_M \in \mathcal{A}^+_\mu$ satisfying
		%\[
		%\phi_M \in \Sigma, \quad \|\nabla \phi_M\|_{L^2} <1, \quad E_\mu(\phi_M) \geq 0, \quad I_\mu(\phi_M) <0.
		%\]
		%By Lemma $\ref{lem-blowup-Sigma}$, the corresponding solution to \eqref{NLS} with initial data $\phi_M$ blows up in finite time. This again contradicts the fact $\phi_M \in \mathcal{A}^+_\mu$ which implies the corresponding solution exists globally in time. Therefore, $I_\mu(\phi)>0$ and $\phi \in \mathcal{K}^+_\mu$. 
		
		This shows that $\mathcal{A}^+_\mu \equiv \mathcal{K}^+_\mu$. The proof is complete.
	\end{proof}

	\noindent {\bf Proof of Theorem $\ref{theo-blowup}$.}
	It follows immediately from Lemmas $\ref{lem-blow-nega-Sig}$, $\ref{lem-blow-nega-radi}$, $\ref{lem-blowup-Sigma}$, $\ref{lem-blowup-rad}$ and Lemma $\ref{lem-equi-A-K}$.
	\hfill $\Box$
	
	We end this section with the following observation.
	\begin{lemma} \label{lem-example}
		Let $\mu \in \{0,1\}$. It holds that $E_\mu(Q_\mu)>0$. Moreover, there exists $u_0 \in \Sigma$ satisfying $\|\nabla u_0\|_{L^2}<1, E_\mu(u_0) > 0$ and $u_0 \in \mathcal{K}^-_\mu$. 
	\end{lemma}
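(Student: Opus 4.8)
The plan is to split the statement into two independent parts: first establish the strict positivity $E_\mu(Q_\mu)>0$, then perturb $Q_\mu$ by the scaling $\phi_\lambda(x)=\lambda\phi(\lambda x)$ of Lemma~\ref{lem-I} to produce the desired $u_0$.

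For the first part, I would begin from the Pohozaev-type identities in Lemma~\ref{lem-pro-Q}. Items four and five there give $\|\nabla Q_\mu\|_{L^2}^2+\|Q_\mu\|_{L^2}^2=\int\overline{Q_\mu}f_\mu(Q_\mu)\,dx$ and $\tfrac12\|Q_\mu\|_{L^2}^2=\int F_\mu(Q_\mu)\,dx$, whence $I_\mu(Q_\mu)=\|\nabla Q_\mu\|_{L^2}^2-\int(\overline{Q_\mu}f_\mu(Q_\mu)-2F_\mu(Q_\mu))\,dx=0$ (this vanishing is already used in Lemma~\ref{lem-S-I}). Combining the identity $I_\mu(\phi)=2E_\mu(\phi)-\int(\overline{\phi}f_\mu(\phi)-4F_\mu(\phi))\,dx$ with the pointwise relation $\overline{u}f_\mu(u)-4F_\mu(u)=\frac{1}{4\pi}g_\mu(4\pi|u|^2)$ established in the proof of Lemma~\ref{lem-blow-nega-Sig}, the vanishing of $I_\mu(Q_\mu)$ yields
\[
E_\mu(Q_\mu)=\frac12\int\big(\overline{Q_\mu}f_\mu(Q_\mu)-4F_\mu(Q_\mu)\big)\,dx=\frac{1}{8\pi}\int g_\mu(4\pi|Q_\mu|^2)\,dx.
\]
It then remains to upgrade the known bound $g_\mu\geq 0$ to strict positivity on $(0,\infty)$. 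Since $g_\mu(s)=se^s-2e^s+s+2$ satisfies $g_\mu(0)=g_\mu'(0)=0$ and $g_\mu''(s)=se^s>0$ for $s>0$, we get $g_\mu(s)>0$ for every $s>0$; as $Q_\mu\not\equiv 0$ this forces $E_\mu(Q_\mu)>0$.

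For the second part, I would take $u_0=(Q_\mu)_\lambda$ with $\lambda>1$ close to $1$. By Lemma~\ref{lem-I}, $I_\mu((Q_\mu)_\lambda)=\lambda^2\Phi_\mu(\lambda)$, where $\Phi_\mu$ is strictly decreasing with $\Phi_\mu(1)=I_\mu(Q_\mu)=0$, so $I_\mu((Q_\mu)_\lambda)<0$ for all $\lambda>1$; moreover $\lambda\mapsto S_\mu((Q_\mu)_\lambda)$ has a strict maximum at $\lambda=1$, so $S_\mu((Q_\mu)_\lambda)<S_\mu(Q_\mu)$ for $\lambda>1$. Hence $(Q_\mu)_\lambda\in\mathcal{K}^-_\mu$ for every $\lambda>1$ by the definition \eqref{def-K-pm}. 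The remaining three constraints hold on a right-neighbourhood of $1$: since $Q_\mu$ decays exponentially (Lemma~\ref{lem-pro-Q}) we have $Q_\mu\in\Sigma$, and the scaling preserves $\Sigma$; since $\|\nabla(Q_\mu)_\lambda\|_{L^2}^2=\lambda^2\|\nabla Q_\mu\|_{L^2}^2$ and $\|\nabla Q_\mu\|_{L^2}<1$, the condition $\|\nabla(Q_\mu)_\lambda\|_{L^2}<1$ holds for $1\le\lambda<\|\nabla Q_\mu\|_{L^2}^{-1}$; and $\lambda\mapsto E_\mu((Q_\mu)_\lambda)=\frac{\lambda^2}{2}\|\nabla Q_\mu\|_{L^2}^2-\lambda^{-2}\int F_\mu(\lambda Q_\mu)\,dx$ is continuous with value $E_\mu(Q_\mu)>0$ at $\lambda=1$ (by dominated convergence, $Q_\mu$ being bounded and exponentially decaying), so $E_\mu((Q_\mu)_\lambda)>0$ for $\lambda$ near $1$. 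Choosing $\lambda>1$ small enough that all three hold simultaneously produces the required $u_0$.

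The computations are routine; the only genuine point is the strict positivity of $g_\mu$, which turns the nonstrict bound $E_\mu(Q_\mu)\geq 0$ into $E_\mu(Q_\mu)>0$, together with the observation that $\lambda=1$ already realizes the gradient and energy conditions as strict inequalities while $\lambda>1$ supplies $I_\mu<0$ and $S_\mu<S_\mu(Q_\mu)$, so that a whole interval $(1,1+\delta)$ works.
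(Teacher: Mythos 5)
Your proof is correct, and while your construction of $u_0$ matches the paper's, your proof of $E_\mu(Q_\mu)>0$ takes a genuinely different route. The paper argues by contradiction through the dynamics: assuming $E_\mu(Q_\mu)\leq 0$, it considers $\psi_\lambda=\lambda Q_\mu(\lambda\cdot)$ with $\lambda<1$ close to $1$, for which $E_\mu(\psi_\lambda)<0$, $I_\mu(\psi_\lambda)>0$ and $S_\mu(\psi_\lambda)<S_\mu(Q_\mu)$, so that $\psi_\lambda$ is a finite-variance datum lying in $\mathcal{K}^+_\mu=\mathcal{A}^+_\mu$ (Lemma \ref{lem-equi-A-K}) whose solution is simultaneously global (Theorem \ref{theo-gwp}) and blows up in finite time (Lemma \ref{lem-blow-nega-Sig}) --- a contradiction. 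You instead compute directly: the Pohozaev identities of Lemma \ref{lem-pro-Q} give $I_\mu(Q_\mu)=0$, hence $E_\mu(Q_\mu)=\frac{1}{8\pi}\int g_\mu(4\pi|Q_\mu|^2)\,dx$, and your observation that $g_\mu(s)=se^s-2e^s+s+2$ satisfies $g_\mu(0)=g_\mu'(0)=0$ and $g_\mu''(s)=se^s>0$ upgrades the paper's nonstrict bound $g_\mu\geq 0$ to strict positivity on $(0,\infty)$, which forces $E_\mu(Q_\mu)>0$ since $Q_\mu\not\equiv 0$ is continuous. Your route is more elementary and self-contained: it needs no well-posedness theory, no invariance of the sets under the flow, and none of Theorem \ref{theo-gwp}, Lemma \ref{lem-blow-nega-Sig}, or Lemma \ref{lem-equi-A-K}, whereas the paper's argument recycles machinery it has already built at the cost of making a pointwise algebraic fact look like a dynamical accident. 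For the second assertion, your argument --- taking $(Q_\mu)_\lambda$ with $\lambda>1$ near $1$, using the strict monotonicity of $\Phi_\mu$ and $\Phi_\mu(1)=I_\mu(Q_\mu)=0$ to get $I_\mu<0$, the strict maximum of $S_\mu((Q_\mu)_\lambda)$ at $\lambda=1$ to get $S_\mu<S_\mu(Q_\mu)$, exponential decay for membership in $\Sigma$, the scaling identity $\|\nabla(Q_\mu)_\lambda\|_{L^2}=\lambda\|\nabla Q_\mu\|_{L^2}<1$, and continuity of $\lambda\mapsto E_\mu((Q_\mu)_\lambda)$ --- is essentially identical to the paper's.
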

	
	\begin{proof}
		Denote
		\[
		\psi_\lambda(x):= \lambda Q_\mu(\lambda x).
		\]
		By Lemma $\ref{lem-I}$,
		\[
		\partial_\lambda E_\mu(\psi_\lambda) = \partial_\lambda S_\mu(\psi_\lambda) = \frac{1}{\lambda} I_\mu(\psi_\lambda), \quad I_\mu(\psi_\lambda)= \lambda^2 \Psi_\mu(\lambda),
		\]
		where $\lambda \mapsto \Psi_\mu(\lambda)$ is strictly decreasing on $(0,\infty)$. Since $\Psi_\mu(1)=I_\mu(Q_\mu) =0$, we infer that $\lambda \mapsto E_\mu(\psi_\lambda)$ and $\lambda \mapsto S_\mu(\psi_\lambda)$ attain their maxima at $\lambda =1$. If $E_\mu(Q_\mu) \leq 0$, then for $\lambda$ close to 1 and $\lambda<1$, we have
		\[
		E_\mu(\psi_\lambda)<0, \quad I_\mu(\psi_\lambda)>0, \quad S_\mu(\psi_\lambda) < S_\mu(Q_\mu)
		\]
		which implies that
		\[
		\psi_\lambda \in \mathcal{K}^+_\mu, \quad E_\mu(\psi_\lambda)<0, \quad \psi_\lambda \in \Sigma.
		\]
		This however is a contradiction because Lemma $\ref{lem-equi-A-K}$, Theorem $\ref{theo-gwp}$ and Lemma $\ref{lem-blow-nega-Sig}$. This shows that $E_\mu(Q_\mu)>0$. Moreover, for $\lambda$ close to 1 and $\lambda>1$, we have
		\[
		E_\mu(\psi_\lambda) > 0, \quad I_\mu(\psi_\lambda) <0, \quad S_\mu(\psi_\lambda) <S_\mu(Q_\mu).
		\]
		Moreover, for $\lambda$ close to 1 and $\lambda>1$, we also have $\|\nabla \psi_\lambda\|_{L^2} =\lambda \|\nabla Q_\mu\|_{L^2}<1$ since $\|\nabla Q_\mu\|_{L^2}<1$. Thus, there exists an initial data satisfying the desired properties.
	\end{proof}

	\section{Energy scattering for radially symmetric initial data}
	\label{S4}
	\setcounter{equation}{0}
	
	In this section, we give the proof of Theorem $\ref{theo-scat}$. To this end, we prepare some lemmas. 
	
	\begin{lemma} \label{lem-open-set}
		We have that
		\[
		\overline{\Ac}^+_1 = \Ac^+_1 \cup \{0\}
		\]
		is an open set of $H^1$,
		where $\overline{\Ac}^+_1$ is as in \eqref{defi-overline-A+}.
	\end{lemma}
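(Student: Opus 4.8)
The plan is to establish the two assertions of the lemma in turn: first the set-theoretic identity $\overline{\Ac}^+_1 = \Ac^+_1 \cup \{0\}$, and then the openness. For the identity I would split $\overline{\Ac}^+_1$ according to the sign of $P_1$, writing it as $\{\phi : S_1(\phi) < S_1(Q_1),\ P_1(\phi) > 0\} \cup \{\phi : S_1(\phi) < S_1(Q_1),\ P_1(\phi) = 0\}$. The first set is exactly $\Ac^+_1$, since $P_1(\phi) > 0$ already forces $\phi \ne 0$ (note $P_1(0) = 0$). For the second set, the characterization \eqref{chara-P} says that any nonzero $\phi$ with $P_1(\phi) = 0$ satisfies $S_1(\phi) \ge S_1(Q_1)$; hence the only $\phi$ with $P_1(\phi) = 0$ and $S_1(\phi) < S_1(Q_1)$ is $\phi = 0$, which indeed lies in $\overline{\Ac}^+_1$ because $S_1(0) = 0 < S_1(Q_1)$ and $P_1(0) = 0$. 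This yields $\overline{\Ac}^+_1 = \Ac^+_1 \cup \{0\}$.

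For openness the key preliminary observation is that every $\phi \in \overline{\Ac}^+_1$ satisfies $\tfrac12\|\nabla\phi\|_{L^2}^2 = S_1(\phi) - P_1(\phi) \le S_1(\phi) < S_1(Q_1) = \tfrac12\|\nabla Q_1\|_{L^2}^2$, so that $\|\nabla\phi\|_{L^2} < \|\nabla Q_1\|_{L^2} < 1$; this uniform subcritical gradient bound is what makes the Moser--Trudinger machinery of Section \ref{S2} applicable. Using it, I would argue that on every ball $\{\|\nabla\phi\|_{L^2} \le \beta\}$ with $\beta < 1$ the functionals $S_1$ and $P_1$ are continuous on $H^1$; the only delicate term is $\int F_1(\phi)\,dx$, whose continuity I would obtain from a.e.\ convergence along $H^1$-convergent sequences together with the uniform integrability of $e^{4\pi|\phi|^2}$ furnished by the higher-exponent estimate \eqref{MT-nu} (for some $\nu > 0$ with $(1+\nu)\beta^2 < 1$). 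Granting this, any $\phi_0 \in \Ac^+_1$ has $P_1(\phi_0) > 0$ and $S_1(\phi_0) < S_1(Q_1)$ with $\|\nabla\phi_0\|_{L^2} < 1$, so both strict inequalities persist on a small $H^1$-ball around $\phi_0$; thus $\Ac^+_1$ is open.

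It remains to show that $0$ is an interior point, which is the heart of the matter. Here I would prove that every sufficiently small $\psi$ lies in $\Ac^+_1 \cup \{0\}$. Since $F_1(\psi) = \tfrac{1}{8\pi}\big(e^{4\pi|\psi|^2} - 1 - 4\pi|\psi|^2 - 8\pi^2|\psi|^4\big) \ge 0$ and $F_1(\psi) \lesssim |\psi|^6 e^{4\pi|\psi|^2}$ pointwise, Corollary \ref{coro-refi-MT} (with $p = 6$ and a fixed $\beta < 1$) together with the two-dimensional Gagliardo--Nirenberg inequality $\|\psi\|_{L^6}^6 \lesssim \|\nabla\psi\|_{L^2}^4 \|\psi\|_{L^2}^2$ gives $\int F_1(\psi)\,dx \le C\|\nabla\psi\|_{L^2}^4\|\psi\|_{L^2}^2$ whenever $\|\nabla\psi\|_{L^2} \le \beta$. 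Consequently $P_1(\psi) \ge \big(\tfrac12 - C\|\nabla\psi\|_{L^2}^4\big)\|\psi\|_{L^2}^2 \ge 0$ once $\|\nabla\psi\|_{L^2}$ is small, with strict positivity when $\psi \ne 0$; and $S_1(\psi) \le \tfrac12\|\psi\|_{H^1}^2 < S_1(Q_1)$ once $\|\psi\|_{H^1} < \|\nabla Q_1\|_{L^2}$. Hence a small $H^1$-ball $B$ about $0$ satisfies $B \setminus \{0\} \subset \Ac^+_1$, so $\overline{\Ac}^+_1 = \Ac^+_1 \cup \{0\} = \Ac^+_1 \cup B$ is a union of open sets and therefore open.

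The main obstacle I anticipate is the continuity of $\phi \mapsto \int F_1(\phi)\,dx$ under $H^1$-convergence: the exponential nonlinearity is not controlled by any fixed power, so one cannot merely invoke dominated convergence, and the argument must genuinely exploit that the constraint $\|\nabla\phi\|_{L^2} < 1$ leaves room for a strictly larger exponent $4\pi(1+\nu)$ in \eqref{MT-nu}, providing the uniform integrability needed to pass to the limit. By contrast, the small-norm estimate itself is comparatively soft once the refined inequality \eqref{MT-beta} and the weighted Gagliardo--Nirenberg bound are in hand; its only subtlety is that one must use the gradient-weighted form (extracting the factor $\|\psi\|_{L^2}^2$) rather than a crude Sobolev bound $\|\psi\|_{L^6}^6 \lesssim \|\psi\|_{H^1}^6$, which would be too weak to guarantee $P_1(\psi) \ge 0$ for high-frequency small-amplitude data.
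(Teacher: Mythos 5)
Your proof is correct and follows essentially the same route as the paper: the set identity via \eqref{chara-P}, and openness reduced to showing that functions of small $H^1$-norm lie in $\Ac^+_1$, using Corollary \ref{coro-refi-MT} with $p=6$ together with the Gagliardo--Nirenberg bound $\|\psi\|_{L^6}^6\lesssim\|\nabla\psi\|_{L^2}^4\|\psi\|_{L^2}^2$ to force $P_1(\psi)>0$ and $S_1(\psi)<S_1(Q_1)$. The only difference is that you also make explicit why $\Ac^+_1$ itself is open (continuity of $S_1$ and $P_1$ on the region $\|\nabla\phi\|_{L^2}\leq\beta<1$, with uniform integrability supplied by \eqref{MT-nu}), a step the paper leaves implicit when it asserts that producing a small ball inside $\Ac^+_1\cup\{0\}$ around the origin suffices.
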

	
	\begin{proof}
		Let $\phi \in \overline{\Ac}^+_1$, i.e. $S_1(\phi) <S_1(Q_1)$ and $P_1(\phi) \geq 0$. If $P_1(\phi)>0$, then $\phi \in \Ac^+_1$. Otherwise, if $P_1(\phi)=0$, then, by \eqref{chara-P}, we must have $\phi=0$. This shows that $\overline{\Ac}^+_1 = \Ac^+_1 \cup \{0\}$. 
		
		To see that $\overline{\Ac}^+_1$ is an open set of $H^1$, it suffices to show that there exists $\epsilon_0>0$ sufficiently small such that if $\phi \in H^1 \backslash \{0\}$ satisfying $\|\phi\|_{H^1} <\epsilon_0$, then $\phi \in \Ac^+_1$. In fact, we have
		\[
		S_1(\phi) \leq \frac{1}{2} \|\phi\|^2_{H^1} \leq \frac{1}{2} \epsilon_0^2 < S_1(Q_1)
		\]
		provided that $\epsilon_0>0$ is sufficiently small. On the other hand, by Corollary \ref{coro-refi-MT} and the Gagliardo-Nirenberg inequality, we have
		\[
		\int_{\R^2} F_1(\phi) dx \leq \int_{\R^2} e^{4\pi |\phi|^2} |\phi|^6 dx \leq C \|\phi\|^6_{L^6} \leq C \|\nabla \phi\|^4_{L^2} \|\phi\|^2_{L^2} \leq C \epsilon_0^4 \|\phi\|^2_{L^2} \leq \frac{1}{4} \|\phi\|^2_{L^2}
		\]
		for $\epsilon_0>0$ sufficiently small. It follows that $P_1(\phi) \geq \frac{1}{4}\|\phi\|^2_{L^2} >0$, hence $\phi \in \Ac^+_1$. The proof is complete.
	\end{proof}

	\begin{lemma} \label{lem-coer}
		Let $\mu=1$ and $\phi \in \overline{\Ac}^+_1$. Then it holds that 
		\[
		I_1(\phi) \geq \min \left\{ 2(S_1(Q_1)-S_1(\phi)), \int k_1(4\pi |\phi|^2) dx \right\},
		\]
		where
		\begin{align} \label{defi-ks}
		k_1(s) = \frac{1}{4\pi} \left( \frac{1}{2}s^2 e^s - s e^s + e^s-1\right).
		\end{align}
		Note that $k_1(s) \geq \frac{1}{24\pi} s^3$ for all $s \geq 0$.
	\end{lemma}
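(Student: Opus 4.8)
The plan is to reduce the estimate to a single scalar inequality by isolating a scaling-invariant remainder. The key algebraic observation is the pointwise identity
\[
H_1(\phi) + k_1(4\pi|\phi|^2) = 2\pi |\phi|^4\left(e^{4\pi|\phi|^2}-1\right),
\]
which follows at once from $H_1(\phi) = \tfrac{1}{4\pi}h_1(4\pi|\phi|^2)$ (see \eqref{defi-H-mu}, \eqref{defi-h-mu}) and the definition \eqref{defi-ks} of $k_1$. Since $I_1(\phi) = \|\nabla\phi\|^2_{L^2} - \int H_1(\phi)\,dx$, integrating this identity yields the decomposition
\[
I_1(\phi) = \int k_1(4\pi|\phi|^2)\,dx + R(\phi), \qquad R(\phi) := \|\nabla\phi\|^2_{L^2} - 2\pi\int |\phi|^4\left(e^{4\pi|\phi|^2}-1\right)dx.
\]
The case $\phi=0$ is trivial, so I assume $\phi\in\Ac^+_1$; then by Lemma \ref{lem-equi-A-K} we have $\phi\in\Kc^+_1$, so $I_1(\phi)>0$ and $S_1(\phi)<S_1(Q_1)$. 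The argument then splits according to the sign of $R(\phi)$.

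If $R(\phi)\geq 0$, the decomposition gives at once $I_1(\phi)\geq \int k_1(4\pi|\phi|^2)\,dx$, which is the second term in the minimum, and we are done. The substance of the proof is the complementary case $R(\phi)<0$, in which I must establish $I_1(\phi)\geq 2(S_1(Q_1)-S_1(\phi))$. Here I use the scaling $\phi_\lambda(x)=\lambda\phi(\lambda x)$ of Lemma \ref{lem-I}: the map $\lambda\mapsto\Phi_1(\lambda)=\lambda^{-2}I_1(\phi_\lambda)$ is strictly decreasing with $\Phi_1(1)=I_1(\phi)>0$ and $\Phi_1(\lambda)\to-\infty$, so there is a unique $\lambda^*>1$ with $I_1(\phi_{\lambda^*})=0$; by \eqref{S-I} this forces $S_1(\phi_{\lambda^*})\geq S_1(Q_1)$.

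The crucial step is to show that $R(\phi)<0$ confines the zero to $\lambda^*<\sqrt2$. Applying the same decomposition to $\phi_{\sqrt2}$ and rescaling, one computes
\[
I_1(\phi_{\sqrt2}) = \tfrac12\int k_1(8\pi|\phi|^2)\,dy + 2\|\nabla\phi\|^2_{L^2} - 4\pi\int |\phi|^4\left(e^{8\pi|\phi|^2}-1\right)dy.
\]
Using $R(\phi)<0$ to bound $2\|\nabla\phi\|^2_{L^2}$ above by $4\pi\int|\phi|^4(e^{4\pi|\phi|^2}-1)\,dy$ reduces the inequality $I_1(\phi_{\sqrt2})\leq0$ to the pointwise bound, with $t=4\pi|\phi|^2$,
\[
\psi(t):= 2t^2 e^t - 2t e^{2t} + e^{2t}-1 \leq 0, \qquad t\geq 0,
\]
which I verify from $\psi(0)=0$ together with $\psi'(t)=2te^t\,(2+t-2e^t)\leq 0$ for $t\geq0$ (since $2e^t\geq 2+2t>2+t$). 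Hence $\Phi_1(\sqrt2)=\tfrac12 I_1(\phi_{\sqrt2})<0$, and strict monotonicity of $\Phi_1$ gives $\lambda^*<\sqrt2$. I expect this pointwise inequality, which is precisely what pins down the location of $\lambda^*$, to be the main obstacle; the rest is bookkeeping.

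To conclude the hard case, I integrate $\partial_\lambda S_1(\phi_\lambda)=\lambda\,\Phi_1(\lambda)$ (Lemma \ref{lem-I}) over $[1,\lambda^*]$. Since $\Phi_1$ is decreasing and nonnegative there, $\partial_\lambda S_1(\phi_\lambda)\leq \lambda\,I_1(\phi)$, whence
\[
S_1(Q_1)-S_1(\phi) \leq S_1(\phi_{\lambda^*})-S_1(\phi) \leq \frac{(\lambda^*)^2-1}{2}\,I_1(\phi) < \frac12\,I_1(\phi),
\]
using $\lambda^*<\sqrt2$. This yields $I_1(\phi)>2(S_1(Q_1)-S_1(\phi))$, again dominating the minimum. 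Finally, the closing assertion $k_1(s)\geq \tfrac{1}{24\pi}s^3$ is just the leading term of the Taylor expansion of $k_1$ at $s=0$ together with the nonnegativity of the remaining coefficients.
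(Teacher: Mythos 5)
Your proof is correct, and it shares the paper's overall skeleton but resolves the main case by a genuinely different mechanism. The skeleton is the same: you reduce to $\phi\in\Ac^+_1$, and your case split on the sign of $R(\phi)=\|\nabla\phi\|_{L^2}^2-2\pi\int|\phi|^4\big(e^{4\pi|\phi|^2}-1\big)dx$ is exactly the paper's split (your $R(\phi)$ is one quarter of the paper's discriminant $4\|\nabla\phi\|_{L^2}^2-\int\partial_\lambda\big(\lambda\overline{\phi}f_1(\lambda\phi)-2F_1(\lambda\phi)\big)\big|_{\lambda=1}dx$), and the easy case $R(\phi)\geq0$ is the same computation. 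In the hard case the paper shows that $\lambda\mapsto 4\|\nabla\phi\|_{L^2}^2-\lambda^{-3}\int\partial_\lambda\big(\lambda\overline{\phi}f_1(\lambda\phi)-2F_1(\lambda\phi)\big)dx$ is decreasing, so negativity at $\lambda=1$ propagates to all $\lambda\geq1$ and yields the differential inequality $(\lambda g'(\lambda))'\leq-2g'(\lambda)$, $g(\lambda)=S_1(\phi_\lambda)$; integrating over $(1,\lambda_1)$ produces the constant $2$ exactly, by the same mechanism as in Lemma \ref{lem-est-I}. You instead localize the zero of $\lambda\mapsto I_1(\phi_\lambda)$: feeding $R(\phi)<0$ into the rescaled decomposition of $I_1(\phi_{\sqrt2})$ reduces matters to the pointwise inequality $\psi(t)=2t^2e^t-2te^{2t}+e^{2t}-1\leq0$, which pins $\lambda^*<\sqrt2$, and then the crude bound $\Phi_1(\lambda)\leq\Phi_1(1)=I_1(\phi)$ on $[1,\lambda^*]$ gives $S_1(Q_1)-S_1(\phi)\leq\frac{(\lambda^*)^2-1}{2}I_1(\phi)<\frac12 I_1(\phi)$. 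I verified your three computational pillars — the identity $H_1(\phi)+k_1(4\pi|\phi|^2)=2\pi|\phi|^4\big(e^{4\pi|\phi|^2}-1\big)$, the scaling formula for $I_1(\phi_{\sqrt2})$, and $\psi'(t)=2te^t(2+t-2e^t)\leq0$ — and all are right, as is the Taylor-coefficient argument for $k_1(s)\geq\frac{1}{24\pi}s^3$. As for what each route buys: the paper's differential inequality is structural and insensitive to the precise nonlinearity (it transfers verbatim to $\mu=0$, as the paper remarks after Lemma \ref{lem-L6}, since the monotonicity argument only uses $h_\mu'(s)=s(e^s-\mu)$), whereas your argument is more elementary — it needs only the monotonicity of $\Phi_1$ already recorded in Lemma \ref{lem-I} plus one calculus inequality — but it is tuned to the target constant: the factor $2$ in the conclusion is exactly what the threshold $\sqrt2$ delivers, so adapting it to $\mu=0$ or to a sharper constant would require re-deriving the pointwise inequality at a different scaling parameter.
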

	
	\begin{proof}
		It suffices to prove the above estimate for $\phi \in \Ac^+_1$ since it holds trivially for $\phi =0$. 
		If  
		\[
		4 \|\nabla \phi\|^2_2 - \int \left.\partial_\lambda \left( \lambda \overline{\phi} f_1(\lambda \phi) - 2 F_1(\lambda \phi) \right) \right|_{\lambda=1} dx \geq 0,
		\]
		then 
		\begin{align*}
		I_1(\phi) &= \|\nabla \phi\|^2_2 - \int \overline{\phi} f_1(\phi) - 2F_1(\phi) dx \\
		&\geq \frac{1}{4} \int \left.\partial_\lambda \left( \lambda \overline{\phi} f_1(\lambda \phi) - 2 F_1(\lambda \phi) \right) \right|_{\lambda=1} dx  - \int \overline{\phi} f_1(\phi) - 2F_1(\phi) dx \\
		& = \int k_1(4\pi |\phi|^2) dx,
		\end{align*}
		where $k_1(s)$ is given in \eqref{defi-ks}. Note that a direct computation shows that
		\[
		\left.  \partial_\lambda \left( \lambda \overline{\phi} f_1(\lambda \phi) - 2F_1(\lambda \phi)\right)\right|_{\lambda=1} = 8 \pi |\phi|^4 \left( e^{4\pi |\phi|^2} -1\right)
		\]
		and
		\[
		\overline{\phi} f_1(\phi) - 2F_1(\phi) = \frac{1}{4\pi} \left( 4\pi |\phi|^2 e^{4\pi |\phi|^2} - e^{4\pi |\phi|^2} + 1 - 8\pi^2 |\phi|^4 \right).
		\]
		It is not hard to check that $k_1(s) \geq \frac{1}{24\pi} s^3$ for all $s \geq 0$. 
		
		We now consider the case 
		\[
		4 \|\nabla \phi\|^2_2 - \int \left.\partial_\lambda \left( \lambda \overline{\phi} f_1(\lambda \phi) - 2 F_1(\lambda \phi) \right) \right|_{\lambda=1} dx < 0.
		\]
		As in the proof of Lemma $\ref{lem-est-I}$, if we set $g(\lambda):= S_1(\phi_\lambda)$, then a direct computation shows that
		\[
		g'(\lambda) = \lambda \|\nabla \phi\|^2_2 - \lambda^{-3} \int \lambda \overline{\phi} f_1(\lambda \phi) - 2F_1(\lambda \phi) dx = \frac{I_1(\phi_\lambda)}{\lambda}
		\]
		and
		\begin{align*}
		(\lambda g'(\lambda))' &=2 \lambda \|\nabla \phi\|^2_2 + 2 \lambda^{-3} \int \lambda \overline{\phi} f_1(\lambda \phi) - 2F_1(\lambda \phi) dx - \lambda^{-2} \int \partial_\lambda \left( \lambda \overline{\phi} f_1(\lambda \phi) -2 F_1(\lambda \phi) \right) dx \\
		&= -2 g'(\lambda) + \lambda \left( 4 \|\nabla \phi\|^2_2 - \lambda^{-3} \int \partial_\lambda \left( \lambda \overline{\phi} f_1(\lambda \phi) -2 F_1(\lambda \phi) \right) dx \right).
		\end{align*}
		It is easy to check that 
		\[
		\lambda \mapsto 4 \|\nabla \phi\|^2_2 - \lambda^{-3} \int \partial_\lambda \left( \lambda \overline{\phi} f_1(\lambda \phi) -2 F_1(\lambda \phi) \right) dx
		\]	
		is a decreasing function on $(0,\infty)$. Thus
		\[
		4 \|\nabla \phi\|^2_2 - \lambda^{-3} \int \partial_\lambda \left( \lambda \overline{\phi} f_1(\lambda \phi) -2 F_1(\lambda \phi) \right) dx \leq 4 \|\nabla \phi\|^2_2 -  \int \left.\partial_\lambda \left( \lambda \overline{\phi} f_1(\lambda \phi) -2 F_1(\lambda \phi) \right)\right|_{\lambda=1} dx <0 
		\]
		for all $\lambda \geq 1$. This shows that
		\[
		(\lambda g'(\lambda))' \leq -2g'(\lambda), \quad \forall \lambda \geq 1.
		\]
		Now since $\phi \in \mathcal{A}^+_1$, $I_1(\phi)>0$ and there thus exists $\lambda_1>1$ such that $I_1(\phi_{\lambda_1})=0$ and hence $S_1(\phi_{\lambda_1}) \geq S_1(Q_1)$. Integrating the above inequality over $(1, \lambda_1)$, we get
		\[
		I_1(\phi_{\lambda_1}) - I_1(\phi) \leq  -2 \left(S_1(\phi_{\lambda_1}) - S_1(\phi) \right) \text{ or } I_1(\phi) \geq 2 \left(S_1(\phi_{\lambda_1}) - S_1(\phi) \right) \geq 2 \left(S_1(Q_1) - S_1(\phi) \right).
		\]
		The proof is complete.
	\end{proof}
	
	%\begin{corollary} \label{coro-coer}
	%	Let $\mu=1$. Let $u_0 \in \mathcal{A}^+_1$ and $u$ be the corresponding global solution to \eqref{NLS}. Then there exists $C=C(u_0,Q_1)>0$ such that
	%	\begin{align} \label{coer-A+}
	%	I_1(u(t)) \geq C\|u(t)\|^6_{L^6}, \quad \forall t\in \R.
	%	\end{align}
	%\end{corollary}
	%\begin{proof}
	%	Since $u(t) \in \mathcal{A}^+_1$ for all $t\in \R$, we apply Lemma $\ref{lem-coer}$ to get
	%	\begin{align*}
	%	I_1(u(t)) &\geq \min \left\{ 2 (S_1(Q_1) - S_1(u(t)), \int k_1(4\pi |u(t)|^2) dx \right\} \\
	%	&= \min \left\{ 2(S_1(Q_1) - S_1(u_0)), \int k_1(4\pi |u(t)|^2) dx \right\}.
	%	\end{align*}
	%	Note that 
	%	\[
	%	\int k_1(4\pi |u(t)|^2) dx \geq \frac{8 \pi^2}{3} \|u(t)\|^6_{L^6}.
	%	\]
	%	Moreover, by Sobolev embedding,
	%	\[
	%	\|u(t)\|^6_{L^6} \leq C\|u(t)\|^6_{H^1} = C\left(\|\nabla u(t)\|^2_{L^2}+ \|u(t)\|^2_{L^2} \right)^3 \leq C\left( \|\nabla Q_1\|^2_{L^2} + \|u_0\|^2_{L^2}\right)^3
	%	\]
	%	which implies that
	%	\[
	%	2(S_1(Q_1) -S_1(u_0)) \geq C\|u(t)\|^6_{L^6}
	%	\]
	%	for some $C=C(u_0,Q_1)>0$. The proof is complete.
	%\end{proof}
	
	\begin{lemma}
		Let $\mu=1$. Let $u_0 \in \mathcal{A}^+_1$ and $u_0$ be radially symmetric. Let $u$ be the corresponding global solution to \eqref{NLS}. Then there exists $C=C(u_0,Q_1)>0$ such that for any $R>0$ and any $t\in \R$,
		\begin{align} \label{viri-A+}
		\frac{d^2}{dt^2} V_{\varphi_R}(t) \geq 8 I_1(\chi_R u(t)) - CR^{-2} -C \left(e^{CR^{-1}}-1\right),
		\end{align}
		where $\chi_R(x) = \chi(x/R)$ with $\chi \in C^\infty_0(\R^2)$ satisfying $0\leq \chi \leq 1$, $\chi=1$ on $B(0,1/2)$ and $\chi=0$ on $\R^2 \backslash B(0,1)$. 
	\end{lemma}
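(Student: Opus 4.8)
The plan is to start from the localized second virial identity \eqref{seco-viri} with $\varphi=\varphi_R$ and to match the three resulting terms against the two pieces of $8I_1(\chi_R u)=8\|\nabla(\chi_R u)\|_{L^2}^2-8\int H_1(\chi_R u)\,dx$. Writing
\[
\frac{d^2}{dt^2}V_{\varphi_R}(t)=\underbrace{-\int\Delta^2\varphi_R|u|^2\,dx}_{T_1}+\underbrace{4\sum_{j,k}\int\partial^2_{jk}\varphi_R\,\mathrm{Re}(\partial_j\overline u\,\partial_k u)\,dx}_{T_2}\underbrace{-\,2\int\Delta\varphi_R\,H_1(u)\,dx}_{T_3},
\]
I would bound $T_1+T_2$ from below by $8\|\nabla(\chi_R u)\|_{L^2}^2$ up to an $O(R^{-2})$ error, and $T_3$ from below by $-8\int H_1(\chi_R u)\,dx$ up to an error of order $e^{CR^{-1}}-1$. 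Here I repeatedly use that $u_0\in\Ac^+_1$ forces $\sup_t\|\nabla u(t)\|_{L^2}\le\|\nabla Q_1\|_{L^2}<1$ together with mass conservation, which is the source of the dependence $C=C(u_0,Q_1)$.

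For $T_1$: since $\varphi_R(x)=|x|^2$ on $\{|x|\le R\}$ and $\Delta^2|x|^2=0$, the integrand is supported in $\{|x|>R\}$ with $|\Delta^2\varphi_R|\lesssim R^{-2}$, so mass conservation gives $T_1\ge-CR^{-2}$. For $T_2$, radial symmetry collapses the Hessian term to $4\int\varphi_R''(r)|\partial_r u|^2\,dx$; as $\varphi_R''\ge0$ with $\varphi_R''=2$ on $\{|x|\le R\}$, this is $\ge8\int_{|x|\le R}|\nabla u|^2\,dx$. I would then compare with $8\|\nabla(\chi_R u)\|_{L^2}^2$ by expanding $\nabla(\chi_R u)=\chi_R\nabla u+u\nabla\chi_R$: the cross term equals $-\tfrac12\int\Delta(\chi_R^2)|u|^2\,dx$ after one integration by parts and, together with the $\int|\nabla\chi_R|^2|u|^2\,dx$ term, is $O(R^{-2})$ since $\nabla\chi_R$ is $O(R^{-1})$ and supported on an annulus. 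Because $0\le\chi_R\le1$ is supported in $B(0,R)$, we get $\|\nabla(\chi_R u)\|_{L^2}^2=\int\chi_R^2|\nabla u|^2\,dx+O(R^{-2})\le\int_{|x|\le R}|\nabla u|^2\,dx+O(R^{-2})$, whence $T_2\ge8\|\nabla(\chi_R u)\|_{L^2}^2-CR^{-2}$.

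The hard part is $T_3$, because of a sign subtlety: since $H_1(u)=\tfrac1{4\pi}h_1(4\pi|u|^2)$ with $h_1$ nondecreasing on $[0,\infty)$, one has $H_1(\chi_R u)\le H_1(u)$ pointwise, so the crude inequality $\int H_1(u)\ge\int H_1(\chi_R u)$ points the wrong way and must be circumvented by exploiting the smallness of $u$ away from the origin. Splitting at $|x|=R$ (where $\Delta\varphi_R=4$ inside and $|\Delta\varphi_R|\lesssim1$ outside) gives $T_3=-8\int_{|x|\le R}H_1(u)\,dx-2\int_{|x|>R}\Delta\varphi_R\,H_1(u)\,dx$. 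Since $\chi_R\equiv1$ on $B(0,R/2)$ and $\chi_R\equiv0$ outside $B(0,R)$, the discrepancy between $\int_{|x|\le R}H_1(u)\,dx$ and $\int H_1(\chi_R u)\,dx$ is supported on $\{R/2\le|x|\le R\}$ and, together with the exterior contribution, is dominated by $\int_{|x|\ge R/2}H_1(u)\,dx$. The radial Sobolev embedding of Strauss, combined with $\sup_t\|\nabla u(t)\|_{L^2}<1$, mass conservation and $H_1(u)\lesssim|u|^2(e^{4\pi|u|^2}-1)$, yields $\int_{|x|\ge R/2}H_1(u)\,dx\lesssim e^{CR^{-1}}-1$ exactly as in \eqref{est-non}. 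Absorbing these terms into the error gives $T_3\ge-8\int H_1(\chi_R u)\,dx-C(e^{CR^{-1}}-1)$. Summing the three bounds and recognizing $8\|\nabla(\chi_R u)\|_{L^2}^2-8\int H_1(\chi_R u)\,dx=8I_1(\chi_R u)$ produces
\[
\frac{d^2}{dt^2}V_{\varphi_R}(t)\ge8I_1(\chi_R u(t))-CR^{-2}-C(e^{CR^{-1}}-1),
\]
as claimed.
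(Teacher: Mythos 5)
Your proof is correct and follows essentially the same route as the paper's: the same splitting of the localized virial identity \eqref{seco-viri} at $|x|=R$, the same identity $\int|\nabla(\chi_R u)|^2\,dx=\int\chi_R^2|\nabla u|^2\,dx-\int\chi_R\Delta(\chi_R)|u|^2\,dx$ for the kinetic part, and the same treatment of the nonlinear discrepancy on the annulus $\{R/2\le|x|\le R\}$ via monotonicity of $h_1$ plus the Strauss radial Sobolev bound \eqref{est-non}. The only difference is organizational (you match the three virial terms directly against the two pieces of $8I_1(\chi_R u)$, whereas the paper first derives the lower bound with interior integrals and then converts them), and your explicit flagging of the sign subtlety in $H_1(\chi_R u)\le H_1(u)$ is a nice touch that the paper handles implicitly.
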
	
	
	\begin{proof}
		We have from \eqref{seco-viri} that
		\[
		\frac{d^2}{dt^2} V_{\varphi_R}(t) = -\int \Delta^2 \varphi_R |u(t)|^2 dx + 4\sum_{j,k} \int \partial^2_{jk} \varphi_R \text{Re}(\partial_j \overline{u}(t) \partial_k u(t)) dx - 2 \int \Delta \varphi_R H_1(u(t)) dx,
		\]
		where
		\[
		H_1(u):= \overline{u} f_1(u) - 2F_1(u).
		\]
		Since $\varphi_R(x) = |x|^2$ for $0\leq |x| \leq R$, 
		\begin{align*}
		\frac{d^2}{dt^2} V_{\varphi_R}(t) &= 8 \left(\int_{|x| \leq R} |\nabla u(t)|^2 dx - \int_{|x| \leq R} H_1(u(t)) dx \right) \\
		&\mathrel{\phantom{=}} - \int_{|x|>R} \Delta^2\varphi_R |u(t)|^2 dx + 4 \sum_{j,k} \int_{|x|>R} \partial^2_{jk} \varphi_R \text{Re}(\partial_j \overline{u}(t) \partial_k u(t)) dx \\
		&\mathrel{\phantom{= - \int_{|x|>R} \Delta^2\varphi_R |u(t)|^2 dx}} - 2 \int_{|x|>R} \Delta \varphi_R H_1(u(t)) dx.
		\end{align*}
		Since $\|\Delta^2 \varphi_R\|_{L^\infty} \lesssim R^{-2}$, the conservation of mass implies that
		\[
		\int_{|x|>R} \Delta^2 \varphi_R |u(t)|^2 dx \lesssim R^{-2}.
		\]
		Since $u$ is radially symmetric, we use the fact
		\[
		\partial^2_{jk} = \left(\frac{\delta_{jk}}{r} -\frac{x_j x_k}{r^3} \right) \partial_r + \frac{x_j x_k}{r^2} \partial^2_r
		\]
		to get
		\[
		\sum_{j,k} \partial^2_{jk} \varphi_R \partial_j \overline{u} \partial_k u = \varphi''_R |\partial_ru|^2 \geq 0.
		\]
		On the other hand, noting that $H_1(u) = \frac{1}{4\pi}h_1(4\pi |u|^2)$ (see \eqref{defi-H-mu}) with 
		\begin{align} \label{defi-h-1}
		h_1(s) := s(e^s-1-s) - \left( e^s-1-s-\frac{s^2}{2}\right) \leq s(e^s-1), \quad \forall s \geq 0,
		\end{align}
		we infer by using the radial Sobolev embedding 
		\[
		\|u(t)\|^2_{L^\infty(|x|>R)} \lesssim R^{-1} \|\nabla u(t)\|_{L^2} \|u(t)\|_{L^2} \leq R^{-1} \|\nabla Q_1\|_{L^2} \|u_0\|_{L^2}
		\]
		and estimating as in \eqref{est-non} that
		\begin{align*}
		\int_{|x|>R} \Delta \varphi_R H_1(u(t)) dx &\lesssim \int_{|x|>R} |u(t)|^2 \left( e^{4\pi |u(t)|^2} -1\right) dx \\
		&\lesssim e^{CR^{-1}} -1.
		\end{align*}
		This shows that
		\[
		\frac{d^2}{dt^2} V_{\varphi_R}(t) \geq 8 \left( \int_{|x| \leq R} |\nabla u(t)|^2 dx - \int_{|x| \leq R} H_1(u(t)) dx\right) - CR^{-2} - C\left( e^{CR^{-1}} -1\right)
		\]
		for some $C=C(u_0,Q_1)>0$. Now let $\chi_R$ be as in \eqref{viri-A+}. We see that
		\begin{align*}
		\int |\nabla (\chi_R u(t))|^2 dx &= \int \chi^2_R |\nabla u(t)|^2 dx + \int |\nabla \chi_R|^2 |u(t)|^2 dx + 2\text{Re} \int \chi_R \overline{u}(t) \nabla \chi_R \cdot \nabla u(t) dx \\
		&= \int \chi^2_R |\nabla u(t)|^2 dx - \int \chi_R \Delta(\chi_R) |u(t)|^2 dx \\
		&= \int_{|x| \leq R} |\nabla u(t)|^2 dx - \int_{R/2 \leq |x| \leq R} (1-\chi_R^2) |\nabla u(t)|^2 dx - \int \chi_R\Delta(\chi_R) |u(t)|^2 dx
		\end{align*}
		and
		\begin{align*}
		\int H_1(\chi_R u(t)) dx &= \int_{|x| \leq R} H_1(u(t)) dx - \int_{R/2 \leq |x| \leq R} H_1(u(t)) - H_1(\chi_R u(t)) dx.
		\end{align*}
		Note that
		\[
		\int \chi_R \Delta(\chi_R) |u(t)|^2 dx \lesssim R^{-2} 
		\]
		and since $h_1$ defined in \eqref{defi-h-1} is increasing on $[0,\infty)$,
		\[
		\left|\int_{R/2 \leq |x| \leq R} H_1(u(t)) - H_1(\chi_R u(t)) dx \right| \leq 2 \int_{R/2 \leq |x| \leq R} H_1(u(t)) dx \lesssim e^{CR^{-1}}-1.
		\]
		This implies that
		\begin{align*}
		\int_{|x| \leq R} |\nabla u(t)|^2 dx &- \int_{|x| \leq R} H_1(u(t)) dx \\
		&= \int |\nabla (\chi_R u(t))|^2 dx - \int H_1(\chi_R u(t)) dx + \int_{R/2 \leq |x| \leq R} (1-\chi^2_R) |\nabla u(t)|^2 dx \\
		&\mathrel{\phantom{=}} +\int \chi_R \Delta(\chi_R) |u(t)|^2 dx + \int_{R/2 \leq |x| \leq R} H_1(u(t)) - H_1(\chi_Ru(t)) dx \\
		&\geq I_1(\chi_R u(t)) - CR^{-2} - C\left( e^{CR^{-1}}-1\right)
		\end{align*}
		for some $C=C(u_0,Q_1)>0$. The proof is complete.
	\end{proof}
	
	\begin{lemma} \label{lem-est-S1-chi-R}
		Let $\mu=1$. Let $u_0 \in\mathcal{A}^+_1$ and $u_0$ be radially symmetric. Let $u$ be the corresponding global solution to \eqref{NLS}. Then there exists $R_0=R_0(u_0,Q_1)>0$ sufficiently large such that 
		\begin{align} \label{est-S1-chi-R}
		S_1(\chi_R u(t)) \leq S_1 (u_0) + CR^{-2} + C \left(e^{CR^{-1}} -1\right)
		\end{align}
		for some constant $C=C(u_0,Q_1)>0$. In particular, we have
		\begin{align} \label{S-chi-R}
		S_1(\chi_R u(t)) < S_1(Q_1)
		\end{align}
		for all $R\geq R_0$ and all $t\in \R$.
	\end{lemma}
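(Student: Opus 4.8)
The plan is to estimate the difference $S_1(\chi_R u(t)) - S_1(u(t))$ and then invoke the conservation laws. Since $u_0 \in \mathcal{A}^+_1$, Theorem $\ref{theo-gwp}$ yields a global solution, and the invariance of $\mathcal{A}^+_1$ together with the conservation of mass and energy gives $u(t) \in \mathcal{A}^+_1$ and $S_1(u(t)) = S_1(u_0) < S_1(Q_1)$ for every $t \in \R$. In particular, arguing exactly as in the proof of Theorem $\ref{theo-gwp}$, one has $\sup_{t\in\R}\|\nabla u(t)\|^2_{L^2} \leq 2S_1(Q_1) < 1$, and this time-uniform bound is what makes the exponential estimates below uniform in $t$. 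Expanding $S_1(\chi_R u) = \tfrac12\|\nabla(\chi_R u)\|^2_{L^2} + \tfrac12\|\chi_R u\|^2_{L^2} - \int F_1(\chi_R u)\,dx$, I would bound the kinetic, the mass and the potential contributions separately.

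For the kinetic part I would reuse the integration-by-parts identity already established in the proof of the preceding lemma, namely
\[
\int |\nabla(\chi_R u)|^2\,dx = \int \chi_R^2|\nabla u|^2\,dx - \int \chi_R\,\Delta(\chi_R)\,|u|^2\,dx.
\]
Because $0 \leq \chi_R \leq 1$ and $|\Delta(\chi_R)| \lesssim R^{-2}$, the conservation of mass gives $\tfrac12\|\nabla(\chi_R u)\|^2_{L^2} \leq \tfrac12\|\nabla u(t)\|^2_{L^2} + CR^{-2}$, while $\chi_R^2 \leq 1$ trivially yields $\tfrac12\|\chi_R u\|^2_{L^2} \leq \tfrac12\|u(t)\|^2_{L^2}$.

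The genuine difficulty is the potential term, where the exponential nonlinearity must be tamed. Since $\chi_R \equiv 1$ on $\{|x| \leq R/2\}$ and $F_1$ is nonnegative and nondecreasing in $|u|$ (as $\tfrac{d}{ds}(e^s-1-s-\tfrac{s^2}{2}) = e^s-1-s \geq 0$ for $s \geq 0$), one has $0 \leq F_1(u) - F_1(\chi_R u) \leq F_1(u)$ pointwise, and hence
\[
\int F_1(u)\,dx - \int F_1(\chi_R u)\,dx \leq \int_{|x|>R/2} F_1(u(t))\,dx.
\]
Using $F_1(u) \lesssim |u|^2\big(e^{4\pi|u|^2}-1\big)$ and then the radial Sobolev (Strauss) embedding $\|u(t)\|^2_{L^\infty(|x|>R/2)} \lesssim R^{-1}\|\nabla u(t)\|_{L^2}\|u(t)\|_{L^2}$ combined with $\sup_t\|\nabla u(t)\|_{L^2}<1$ and the conservation of mass, exactly as in $\eqref{est-non}$, this integral is $\leq C(e^{CR^{-1}}-1)$ with $C=C(u_0,Q_1)$ independent of $t$. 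Adding the three estimates produces $\eqref{est-S1-chi-R}$.

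Finally, to pass to $\eqref{S-chi-R}$, set $\eta := S_1(Q_1) - S_1(u_0)$, which is strictly positive since $u_0 \in \mathcal{A}^+_1$. As the function $R \mapsto CR^{-2} + C(e^{CR^{-1}}-1)$ is decreasing and tends to $0$ as $R \to \infty$, I can choose $R_0 = R_0(u_0,Q_1)$ so large that $CR^{-2} + C(e^{CR^{-1}}-1) < \eta$ for all $R \geq R_0$; feeding this into $\eqref{est-S1-chi-R}$ gives $S_1(\chi_R u(t)) < S_1(u_0)+\eta = S_1(Q_1)$ for all $R \geq R_0$ and all $t \in \R$. The one point to watch throughout is the uniformity in $t$ of the constant $C$, which is supplied precisely by the conservation of mass and energy and by the time-uniform bound $\sup_t\|\nabla u(t)\|_{L^2}<1$; this is the place where membership in $\mathcal{A}^+_1$ is used in an essential way.
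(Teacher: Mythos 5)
Your proof is correct and follows essentially the same route as the paper: the same decomposition of $S_1(\chi_R u(t))$ into kinetic, mass, and potential parts, the same integration-by-parts identity for the gradient term, the same localization of the potential difference to $\{|x|>R/2\}$ handled via $F_1(u)\lesssim |u|^2(e^{4\pi|u|^2}-1)$ and the radial Sobolev embedding, and the same choice of $R_0$ using $S_1(Q_1)-S_1(u_0)>0$. Your added details (the monotonicity of $F_1$ giving the pointwise bound, and the explicit time-uniform bound $\sup_t\|\nabla u(t)\|^2_{L^2}<1$ ensuring uniformity of the constants) are correct refinements of steps the paper leaves implicit.
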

	\begin{proof}
		Since $\mathcal{A}^+_1$ is invariant under the flow of \eqref{NLS}, we have that $u(t) \in \mathcal{A}^+_1$ for all $t\in \R$. We also have that
		\[
		S_1(\chi_R u(t)) = \frac{1}{2} \|\nabla (\chi_R u(t))\|^2_{L^2} + \frac{1}{2} \|\chi_R u(t)\|^2_{L^2} - \int F_1(\chi_R u(t)) dx,
		\]
		where
		\begin{align*}
		\|\chi_R u(t)\|^2_{L^2} &\leq \|u(t)\|^2_{L^2}, \\
		\|\nabla (\chi_R u(t))\|^2_{L^2} &= \int \chi^2_R |\nabla u(t)|^2 dx - \int \chi_R \Delta(\chi_R) |u(t)|^2 dx \\
		&\leq \int |\nabla u(t)|^2 dx  + O(R^{-2})
		\end{align*}
		and
		\begin{align*}
		\int F_1(\chi_R u(t)) dx &= \int F_1(u(t)) dx + \int_{|x| > R/2} F_1(\chi_Ru(t)) - F_1(u(t)) dx \\
		&= \int F_1(u(t)) + O\left( e^{CR^{-1}} -1\right).
		\end{align*}
		Here we have used the fact
		\[
		F_1(u) \lesssim \left(e^{4\pi |u|^2} -1 \right)|u|^2.
		\]
		Thus
		\[
		S_1(\chi_R u(t)) \leq S_1(u(t)) + CR^{-2} + C \left( e^{CR^{-1}}- 1\right)
		\]
		for some constant $C=C(u_0,Q_1)>0$. This proves \eqref{est-S1-chi-R} as $S_1(u(t)) = S_1(u_0)$ due to the conservation of mass and energy. Next, we write $S_1(u_0) =S_1(Q_1) - \rho$ for some $\rho=\rho(u_0,Q_1)>0$. By choosing $R_0=R_0(u_0,Q_1)>0$ sufficiently large so that for any $R \geq R_0$,
		\[
		C R^{-2} + C \left( e^{CR^{-1}}-1\right) \leq \rho/2,
		\]
		we obtain
		\[
		S_1(\chi_R u(t)) <S_1(Q_1)
		\]
		for all $R\geq R_0$ and all $t\in \R$.
	\end{proof}

	\begin{lemma} \label{lem-L6}
		Let $\mu=1$. Let $u_0 \in \mathcal{A}^+_1$ and $u_0$ be radially symmetric. Let $u$ be the corresponding global solution to \eqref{NLS}. Let $R_0 = R_0(u_0,Q_1)$ be as in Lemma \ref{lem-est-S1-chi-R}. Then we have that $\chi_R u(t) \in \overline{\Ac}^+_1$ for all $R\geq R_0$ and all $t\in \R$. Moreover, we have that for any time interval $I \subset \R$, 
		\begin{align} \label{large-I}
		\int_I \|u(t)\|^6_{L^6} dt \leq C(u_0,Q_1) |I|^{1/3}.
		\end{align}
	\end{lemma}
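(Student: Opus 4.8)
The plan is to prove the two assertions in turn, and I would treat the membership $\chi_R u(t)\in\overline{\Ac}^+_1$ first. Lemma \ref{lem-est-S1-chi-R} already furnishes $S_1(\chi_R u(t))<S_1(Q_1)$ for all $R\ge R_0$ and $t\in\R$, so it remains to check $P_1(\chi_R u(t))\ge 0$. By \eqref{chara-P}, the only element of $\{\phi:S_1(\phi)<S_1(Q_1)\}$ with $P_1=0$ is $\phi=0$, so on this sublevel set the sign of $P_1$ can change only by passing through the origin. I would exploit this by fixing $t$ and varying the truncation radius: the map $R'\mapsto\chi_{R'}u(t)$ is continuous from $[R_0,\infty)$ into $H^1$ and converges to $u(t)$ in $H^1$ as $R'\to\infty$. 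Assuming, without loss of generality, that $\chi$ is radially nonincreasing, the set of $R'$ for which $\chi_{R'}u(t)\ne 0$ is an interval; on it $R'\mapsto P_1(\chi_{R'}u(t))$ is continuous and nonvanishing, hence of constant sign, and since $u(t)\in\Ac^+_1$ gives $P_1(\chi_{R'}u(t))\to P_1(u(t))>0$, that sign must be positive. Thus $P_1(\chi_R u(t))>0$ whenever $\chi_R u(t)\ne 0$, while the case $\chi_R u(t)=0\in\overline{\Ac}^+_1$ is trivial; in all cases $\chi_R u(t)\in\overline{\Ac}^+_1$.

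Next I would upgrade Lemma \ref{lem-coer} into the pointwise coercivity bound $I_1(\chi_R u(t))\ge c\,\|\chi_R u(t)\|_{L^6}^6$. Writing $S_1(u_0)=S_1(Q_1)-\rho$ with $\rho>0$ and enlarging $R_0$, Lemma \ref{lem-est-S1-chi-R} shows that the first alternative in Lemma \ref{lem-coer} satisfies $2\big(S_1(Q_1)-S_1(\chi_R u(t))\big)\ge\rho$, while $k_1(s)\ge\frac{1}{24\pi}s^3$ turns the second into $\int k_1(4\pi|\chi_R u(t)|^2)\,dx\ge c_0\|\chi_R u(t)\|_{L^6}^6$. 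Since $\|\chi_R u(t)\|_{L^6}^6\le\|u(t)\|_{L^6}^6\lesssim\|\nabla u(t)\|_{L^2}^4\|u_0\|_{L^2}^2$ is uniformly bounded (using $\|\nabla u(t)\|_{L^2}^2<2S_1(Q_1)<1$ and mass conservation), the minimum of the two alternatives is bounded below by a fixed multiple of $\|\chi_R u(t)\|_{L^6}^6$; splitting according to whether $c_0\|\chi_R u(t)\|_{L^6}^6$ is larger or smaller than $\rho$ gives $I_1(\chi_R u(t))\ge c\,\|\chi_R u(t)\|_{L^6}^6$ with $c=c(u_0,Q_1)>0$.

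For \eqref{large-I} the crucial refinement is to replace the crude tail estimate $e^{CR^{-1}}-1$ of \eqref{viri-A+} by an $O(R^{-2})$ bound. By the radial Sobolev embedding, $\|u(t)\|_{L^\infty(|x|>R/2)}^2\lesssim R^{-1}$ uniformly in $t$, so for $R\ge R_0$ the solution is uniformly small in the exterior region; since $h_1(s)\le Cs^3$ on bounded intervals, this gives $H_1(u(t))\lesssim|u(t)|^6$ there and hence $\int_{|x|>R/2}H_1(u(t))\,dx\lesssim\int_{|x|>R/2}|u(t)|^6\,dx\lesssim\|u(t)\|_{L^\infty(|x|>R/2)}^4\|u_0\|_{L^2}^2\lesssim R^{-2}$. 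Rerunning the computation behind \eqref{viri-A+} with this estimate (all remaining error terms being already $O(R^{-2})$) yields $\frac{d^2}{dt^2}V_{\varphi_R}(t)\ge 8 I_1(\chi_R u(t))-CR^{-2}$, and combining with the coercivity and with $\|\chi_R u(t)\|_{L^6}^6\ge\|u(t)\|_{L^6}^6-CR^{-2}$ (the same tail bound) gives $\frac{d^2}{dt^2}V_{\varphi_R}(t)\ge c\|u(t)\|_{L^6}^6-CR^{-2}$. Integrating over $I$ and bounding the boundary term by $|V'_{\varphi_R}(t)|=\big|2\int\nabla\varphi_R\cdot\text{Im}(\overline{u}(t)\nabla u(t))\,dx\big|\lesssim R\,\|u_0\|_{L^2}\sup_t\|\nabla u(t)\|_{L^2}\lesssim R$, I obtain $\int_I\|u(t)\|_{L^6}^6\,dt\lesssim R+R^{-2}|I|$ for every $R\ge R_0$. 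Choosing $R=\max\{R_0,|I|^{1/3}\}$, and handling $|I|\le 1$ by the trivial bound $\int_I\|u(t)\|_{L^6}^6\,dt\le|I|\sup_t\|u(t)\|_{L^6}^6$, then yields \eqref{large-I}.

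The delicate point is precisely the exponent $1/3$. Used as stated, \eqref{viri-A+} has tail error of size $\sim R^{-1}$, and optimizing $R+R^{-1}|I|$ would only give $|I|^{1/2}$, which is too weak. The essential gain is the observation that on $\Ac^+_1$ with radial data the solution decays pointwise in the exterior region, so the exponential nonlinearity is governed there by its leading sextic term and \emph{every} error term in the virial identity is in fact $O(R^{-2})$; optimizing $R+R^{-2}|I|$ then produces the sharp power $|I|^{1/3}$. Everything else is the standard virial/Morawetz bookkeeping together with the coercivity extracted from Lemma \ref{lem-coer}.
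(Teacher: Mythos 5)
Your proof is correct, and its first two steps coincide with the paper's own argument up to phrasing: for the membership $\chi_R u(t)\in\overline{\Ac}^+_1$ the paper runs a ``smallest radius at which $P_1$ vanishes'' contradiction using the openness of $\overline{\Ac}^+_1$ (Lemma \ref{lem-open-set}), while you use constancy of sign of $R'\mapsto P_1(\chi_{R'}u(t))$ on a connected set via \eqref{chara-P}; and your coercivity step (playing the two alternatives of Lemma \ref{lem-coer} against the uniform bounds coming from Lemma \ref{lem-est-S1-chi-R} and Gagliardo--Nirenberg) is exactly the paper's proof of \eqref{claim}.

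Where you genuinely depart from the paper --- and in fact improve on it --- is the tail estimate in the virial step. The paper keeps the exterior error $C\left(e^{CR^{-1}}-1\right)$ from \eqref{viri-A+} and then asserts that $e^{CR^{-1}}-1\lesssim CR^{-2}$ for $R$ large; this inequality is false, since $e^{CR^{-1}}-1\geq CR^{-1}$, and with an error of size $R^{-1}$ the optimization of $R+R^{-1}T$ only yields $T^{1/2}$, not the claimed $T^{1/3}$. Your refinement closes precisely this gap: on $\left\{|x|>R/2\right\}$ the radial Sobolev inequality gives $|u(t)|^2\lesssim R^{-1}$ uniformly in $t$, and since $h_1(s)=\frac{s^3}{3}+O(s^4)$ near $s=0$ (the quadratic term is cancelled exactly because $\mu=1$ subtracts the quartic part of $F_\mu$), one has $H_1(u)\lesssim |u|^6$ there, so that every exterior error term --- $\int_{|x|>R}\Delta\varphi_R H_1(u)\,dx$, the commutator term $\int_{R/2\leq |x|\leq R}H_1(u)-H_1(\chi_R u)\,dx$, and the replacement of $\|\chi_R u\|_{L^6}^6$ by $\|u\|_{L^6}^6$ --- is $O(R^{-2})$; optimizing $R+R^{-2}|I|$ at $R\sim |I|^{1/3}$ then legitimately gives \eqref{large-I}. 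So your proposal not only proves the statement but repairs the one step of the paper's proof that, as written, does not close; the rest of your bookkeeping (the bound $\left|\frac{d}{dt}V_{\varphi_R}\right|\lesssim R$, the choice $R=\max\{R_0,|I|^{1/3}\}$, and the trivial treatment of short intervals) is sound.
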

	\begin{proof}
		By \eqref{S-chi-R}, it remains to show that
		\begin{align}\label{est-L6-proof}
		P_1(\chi_Ru(t)) \geq 0
		\end{align}
		for all $R\geq R_0$ and all $t\in \R$. Indeed, suppose that there exist $R_1\geq R_0$ and $t_1 \in \R$ such that $P_1(\chi_{R_1} u(t_1))<0$. Note that $\lim_{R \rightarrow \infty} P_1(\chi_R u(t_1)) = P_1(u(t_1)) >0$ since $\Ac^+_1$ is invariant under the flow of \eqref{NLS}. Let $R_2>R_1$ be the smallest value such that $P_1(\chi_{R_2} u(t_1)) =0$, that is, $P_1(\chi_R u(t_1)) <0$ for all $R\in [R_1, R_2)$. Since $P_1(\chi_{R_2} u(t_1)) =0$ and $S_1(\chi_{R_2} u(t_1)) <S_1(Q_1)$, we have $\chi_{R_2} u(t_1) \in \overline{\Ac}^+_1$. Since $\overline{\Ac}^+_1$ is open in $H^1$, we get that $P_1(\chi_R u(t_1)) \geq 0$ for $R<R_2$ and $R$ close to $R_2$. This contradicts to the choice of $R_2$, and \eqref{est-L6-proof} is proved. 
		
		Now, we claim that there exists $C=C(u_0,Q_1)>0$ such that
		\begin{align} \label{claim}
		I_1(\chi_R u(t)) \geq C \|\chi_R u(t)\|^6_{L^6}
		\end{align}
		for all $R\geq R_0$ and all $t\in \R$. In fact, since $\chi_R u(t) \in \overline{\Ac}^+_1$ for all $R\geq R_0$ and all $t\in \R$, we have from Lemma \ref{lem-coer} that
		\[
		I_1(\chi_R u(t)) \geq \min \left\{ 2(S_1(Q_1)-S_1(\chi_R u(t))), \int k_1(4\pi |\chi_R u(t)|^2) dx \right\}
		\]
		for all $R\geq R_0$ and all $t\in \R$, where $k_1$ is as in \eqref{defi-ks}. Thanks to the fact that
		\[
		\int k_1(4\pi |\phi|^2) dx \geq \frac{8\pi^2}{3} \|\phi\|^6_{L^6},
		\]
		it remains to show
		\begin{align} \label{est-S1-Q1}
		S_1(Q_1) - S_1(\chi_R u(t)) \geq C(u_0,Q_1) \|\chi_R u(t)\|^6_{L^6}.
		\end{align}
		Indeed, by \eqref{est-S1-chi-R}, we have
		\[
		S_1(Q_1) - S_1(\chi_R u(t)) \geq S_1(Q_1) - S_1(u_0) - CR^{-2} - C \left(e^{CR^{-1}}-1\right) \geq \frac{1}{2} (S_1(Q_1) - S_1(u_0))
		\]
		provided that $R\geq R_0$ is taken sufficiently large. On the other hand, by Sobolev embedding, we obtain
		\[
		\|\chi_R u(t)\|_{L^6} \leq \|u(t)\|_{L^6} \leq \|u(t)\|_{H^1} \leq C(u_0,Q).
		\]
		This shows \eqref{est-S1-Q1}, and the claim is proved. 
		
		From \eqref{claim} and \eqref{viri-A+}, we get
		\[
		\|\chi_R u(t)\|^6_{L^6} \lesssim \frac{d^2}{dt^2} V_{\varphi_R}(t) + CR^{-2} + C \left( e^{CR^{-1}} -1 \right).
		\]
		Using the fact $\left| \frac{d}{dt} V_{\varphi_R}(t) \right| \lesssim R$, we see that for any $T>0$,
		\[
		\int_0^T \int |\chi_R u(t)|^6 dx dt \lesssim R + \left[CR^{-2} + C \left( e^{CR^{-1}} -1\right) \right] T
		\]
		which implies that
		\begin{align} \label{est-T}
		\int_0^T \int_{|x| \leq R/2} |u(t)|^6 dx dt \lesssim R +\left[CR^{-2} + C \left( e^{CR^{-1}} -1\right) \right] T.
		\end{align}
		It follows from \eqref{est-T} and the fact
		\[
		\int_{|x|>R/2} |u(t)|^6 dx \leq \|u(t)\|^4_{L^\infty(|x|>R/2)} \|u(t)\|^2_{L^2} \lesssim R^{-2} \|\nabla u(t)\|^2_{L^2} \|u(t)\|^4_{L^2} \lesssim CR^{-2}
		\]
		that
		\[
		\int_0^T \|u(t)\|^6_{L^6} dt \lesssim R + \left[CR^{-2} + C \left( e^{CR^{-1}} -1\right) \right] T.
		\]
		Note that for $R>0$ sufficiently large
		\[
		e^{CR^{-1}}-1 \lesssim CR^{-2}.
		\]
		It follows that for $T>0$ sufficiently large, we choose $R=T^{1/3}$ and get
		\begin{align*}
		\int_0^T \|u(t)\|^6_{L^6} dt \lesssim T^{1/3}.
		\end{align*}
		By the same argument, we also have that for any time interval $I$,
		\begin{align*} 
		\int_I \|u(t)\|^6_{L^6} dt \lesssim |I|^{1/3}.
		\end{align*}
		Indeed, for $|I|$ sufficiently large, it follows from the above argument. For $|I|$ small, it follows from the Sobolev embedding $\|u(t)\|_{L^6} \lesssim \|u(t)\|_{H^1} \leq C(u_0,Q_1)$.
	\end{proof}
	
	\begin{remark}
		It is not hard to see that Lemmas $\ref{lem-open-set}$--$\ref{lem-L6}$ still hold true with $\mu=0$.
	\end{remark}

	To show the scattering, we need the following scattering criteria.
	\begin{proposition} \label{prop-crite}
		Let $\mu=1$. Let $u_0 \in \mathcal{A}^+_1$ and $u$ be the corresponding global solution to \eqref{NLS}. Assume that 
		\begin{align} \label{glo-bound}
		\|u\|_{L^8(\R \times \R^2)} <\infty.
		\end{align}
		Then the solution scatters in $H^1$.
	\end{proposition}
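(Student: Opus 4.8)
The plan is to read this as a standard ``global spacetime bound implies scattering'' criterion: I would first upgrade the hypothesis $\|u\|_{L^8(\R\times\R^2)}<\infty$ to the global Strichartz bound $\|u\|_{\ST(\R)}<\infty$, and then deduce scattering by a routine Duhamel/Strichartz tail argument. The essential preliminary observation is the uniform subcriticality of the gradient: since $u_0\in\Ac_1^+$ and $\Ac_1^+$ is invariant under the flow (as in the proof of Theorem $\ref{theo-gwp}$), the characterization \eqref{chara-P} gives $\tfrac12\|\nabla u(t)\|_{L^2}^2<S_1(Q_1)=\tfrac12\|\nabla Q_1\|_{L^2}^2$ for all $t$, so that $\sup_{t\in\R}\|\nabla u(t)\|_{L^2}\le\beta$ for the fixed $\beta:=\|\nabla Q_1\|_{L^2}<1$. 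This bound is what tames the exponential nonlinearity: it lets me apply the refined Moser--Trudinger inequalities (Corollary $\ref{coro-refi-MT}$ and \eqref{MT-nu}) uniformly in $t$ with a fixed exponent headroom $0<\nu<\beta^{-2}-1$, and combined with $W^{1,4}\hookrightarrow L^\infty$ from \eqref{sobo-holder} it yields $\|u\|_{L^4_tL^\infty_x}\lesssim\|\scal{\nabla}u\|_{L^4(I\times\R^2)}\le\|u\|_{\ST(I)}$.

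The technical core is a nonlinear estimate of the schematic shape
\[
\|f_1(u)\|_{\ST^*(I)}\lesssim\big(\|u\|_{L^8(I\times\R^2)}^4+(\text{higher order})\big)\,\|u\|_{\ST(I)}.
\]
This is where the $L^8$ diagnostic norm comes from. For $\mu=1$ the cubic term is subtracted off, so the leading part of $f_1(u)$ is quintic, $8\pi^2|u|^4u$, and for a quintic the H\"older split $\tfrac34=\tfrac48+\tfrac14$ gives $\||u|^4u\|_{\ST^*(I)}\lesssim\|u\|_{L^8(I\times\R^2)}^4\,\|\scal{\nabla}u\|_{L^4(I\times\R^2)}$ for both the plain piece and the $\scal{\nabla}$-differentiated piece. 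The remainder $f_1(u)-8\pi^2|u|^4u=O\!\big(|u|^7e^{4\pi|u|^2}\big)$ contributes terms that are superlinear in $\|u\|_{\ST}$; their exponential weight is absorbed by \eqref{MT-nu} (using the headroom $\nu$) together with the $L^4_tL^\infty_x$ control of $u$, and the extra powers of $|u|$ supply smallness through higher powers of $\|u\|_{L^8}$.

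Given this estimate, I would partition $\R$ into finitely many intervals $I_j$ on each of which $\|u\|_{L^8(I_j\times\R^2)}\le\delta$ (possible since the global $L^8$ norm is finite), and on each $I_j$ run the continuity argument of Lemma $\ref{lem-boots}$ applied to $X(t)=\|u\|_{\ST(I_j\cap(-\infty,t])}$. Writing Duhamel's formula from the left endpoint $t_j$ and applying the Strichartz estimates of Proposition $\ref{prop-strichartz}$ yields $X(t)\le C\|u(t_j)\|_{H^1}+C\delta^4X(t)+b\,X(t)^\theta$ with $\theta>1$, where, on the bootstrap ball $\{X\le2a\}$ with $a\sim\|u(t_j)\|_{H^1}$, the coefficient $b$ of the superlinear exponential remainder is controlled by the Moser--Trudinger inequalities and by $\delta$. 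For $\delta$ small one absorbs the linear term $C\delta^4X$ to the left and concludes $\|u\|_{\ST(I_j)}\lesssim\|u(t_j)\|_{H^1}$, the right side being uniformly bounded by mass/energy conservation and the gradient bound. Summing over the finitely many pieces gives $\|u\|_{\ST(\R)}<\infty$, and hence $\|\scal{\nabla}f_1(u)\|_{L^{4/3}(\R\times\R^2)}<\infty$.

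Finally, scattering follows by setting $u^\pm:=u_0\pm i\int_0^{\pm\infty}e^{-is\Delta}f_1(u(s))\,ds$, which lie in $H^1$ by the global bound, and estimating the Duhamel tail with Strichartz:
\[
\|u(t)-e^{it\Delta}u^\pm\|_{H^1}\lesssim\|\scal{\nabla}f_1(u)\|_{L^{4/3}((t,\pm\infty)\times\R^2)}\xrightarrow[t\to\pm\infty]{}0,
\]
the right side vanishing as a tail of a finite norm. I expect the main obstacle to be precisely the nonlinear estimate above: unlike polynomial NLS, the exponential factor $e^{4\pi|u|^2}$ cannot be dispatched by H\"older and Sobolev alone, and it is essential to exploit \emph{both} the uniform gradient bound $\beta<1$ (to gain the Moser--Trudinger headroom $\nu$) and the $L^4_tL^\infty_x$ control of $u$, distributing the exponential weight by H\"older so its effective exponent never exceeds $4\pi(1+\nu)$. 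This is consistent with the paper's remark that the $\mu=0$ case is harder: there the leading term is the mass-critical cubic $|u|^2u$, for which the $L^8$ diagnostic and the clean quintic H\"older split are unavailable.
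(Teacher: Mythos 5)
Your overall skeleton matches the paper's proof: the uniform bound $\sup_{t}\|\nabla u(t)\|_{L^2}\le\|\nabla Q_1\|_{L^2}<1$ from the invariance of $\Ac^+_1$, a nonlinear estimate bounding $\|\scal{\nabla}f_1(u)\|_{L^{4/3}(I\times\R^2)}$ by powers of $\|u\|_{\ST(I)}$ with smallness coming from $\|u\|_{L^8(I\times\R^2)}$, a partition of $\R$ into finitely many intervals of small $L^8$ norm, the continuity argument of Lemma \ref{lem-boots}, and the standard Duhamel-tail construction of $u^\pm$. The gap sits exactly in the step you yourself flag as the main obstacle: the claim that the exponential weight in the remainder is ``absorbed by \eqref{MT-nu} together with the $L^4_tL^\infty_x$ control of $u$, distributing the exponential weight by H\"older so its effective exponent never exceeds $4\pi(1+\nu)$.'' This cannot work as stated. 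In any H\"older split of $\bigl\|\,|u|^6e^{4\pi|u|^2}\scal{\nabla}u\,\bigr\|_{L^{4/3}_x}$, the factor carrying $e^{4\pi|u|^2}$ must land in some $L^r_x$ with $r\ge 2$: it cannot share the slot of $\scal{\nabla}u$ (which needs $L^4$), and it cannot sit alone in any finite $L^r$ since $e^{4\pi|u|^2}\not\to 0$ at spatial infinity. Raising it to the $r$-th power multiplies the exponent by $r$, so the effective exponent is at least $8\pi$. Reaching $8\pi$ via \eqref{MT-nu} would require headroom $\nu\ge 1$, i.e. $\|\nabla Q_1\|^2_{L^2}<1/2$, which is not available --- the paper only provides $\|\nabla Q_1\|_{L^2}<1$. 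The alternative of pulling out $e^{4\pi\|u(t)\|^2_{L^\infty}}$ and invoking $L^4_tL^\infty_x$ control also fails: the exponential of a function that is merely $L^4$ in time need not lie in any $L^p_t$ (for instance $\|u(t)\|^2_{L^\infty}\sim|\log t|$ is square-integrable near $t=0$ while $e^{4\pi\|u(t)\|^2_{L^\infty}}\sim t^{-4\pi}$ is not even locally integrable).

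The missing idea is the logarithmic inequality, Lemma \ref{lem-log}, which is the heart of the paper's proof. Combined with $W^{1,4}\hookrightarrow\mathcal{C}^{1/2}$ from \eqref{sobo-holder}, and with $\omega$ and $\lambda$ tuned so that $4\pi\lambda K^2(\omega)=2\left(\|\nabla Q_1\|^2_{L^2}+1\right)=:m$, it converts the exponential into a \emph{polynomial} weight, $e^{4\pi\|u(t)\|^2_{L^\infty}}\lesssim \|u(t)\|^m_{W^{1,4}}$, whose exponent satisfies $m<4$ precisely because $\|\nabla Q_1\|_{L^2}<1$. This is what makes the weight time-integrable against the $L^4_tW^{1,4}_x$ Strichartz norm, and it is also what generates the smallness your bootstrap needs: the paper's closing estimate is $\|u\|_{\ST(I_j)}\le C+C\vareps^{n/2}\|u\|^{1+m/2}_{\ST(I_j)}$ with $n=2(4-m)>0$, so the coefficient of the superlinear term carries the factor $\vareps^{n/2}$ required by Lemma \ref{lem-boots}. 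Note finally that once this mechanism is in place your quintic/remainder decomposition is unnecessary: the paper estimates $|f_1'(u)|\lesssim e^{4\pi|u|^2}|u|^4$ directly, writes $\int e^{8\pi|u|^2}|u|^8\,dx\le e^{4\pi\|u\|^2_{L^\infty}}\int e^{4\pi|u|^2}|u|^8\,dx$, and handles the second factor with \eqref{MT-beta} and the first with the logarithmic inequality; without that mechanism, the decomposition does not save the remainder term.
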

	
	\begin{proof}
		We first notice that $u(t) \in \mathcal{A}^+_1$ for all $t\in \R$ and hence $\|\nabla u(t)\|_{L^2} < \|\nabla Q_1\|_2 <1$ for all $t\in \R$. This allows us to use the refined Moser-Trundinger inequality \eqref{MT-beta}. Let $I$ be a time interval. By Strichartz estimates and the Duhamel formula
		\[
		u(t) = e^{it\Delta} u_0 + i \int_0^t e^{i(t-s)\Delta} f_1(u(s)) dx,
		\]
		we have that
		\[
		\|u(t) - e^{it\Delta} u_0\|_{{\ST}(I)} \lesssim \|f_1(u)\|_{{\ST}^*(I)}.
		\]
		We refer to \eqref{defi-ST-norm} and \eqref{defi-ST-dual} for the definitions of ST- and ST*-norms. Using the fact $|f'_1(u)| \lesssim e^{4\pi|u|^2} |u|^4$, the H\"older's inequality implies that
		\[
		\|\scal{\nabla} f_1(u)\|_{L^{4/3}(I \times \R^2)} \lesssim \|\scal{\nabla} u\|_{L^4(I\times \R^2)} \left( \iint_{I \times \R^2} e^{8\pi |u|^2} |u|^8 dx dt \right)^{1/2}.
		\]
		By \eqref{MT-beta},
		\[
		\int_{\R^2} e^{8\pi |u(t)|^2} |u(t)|^8 dx \leq e^{4\pi\|u(t)\|^2_\infty} \int_{\R^2} e^{4\pi |u(t)|^2} |u(t)|^8 dx \lesssim_{Q_1} e^{4\pi \|u(t)\|^2_\infty} \|u(t)\|^8_{L^8}.
		\]
		
		In the case $\|u(t)\|_{L^\infty} \geq 1$, we use \eqref{log} with $\beta=1/2$ to get
		\[
		e^{4\pi \|u(t)\|^2_{L^\infty}} \lesssim \left(1+\frac{\|u(t)\|_{\mathcal{C}^{1/2}}}{\|u(t)\|_{H_\omega}} \right)^{4\pi \lambda \|u(t)\|^2_{H_\omega}}
		\]
		for some $\lambda >\frac{1}{\pi}$ and some $0<\omega<1$ to be chosen later. Since 
		\[
		\|u(t)\|^2_{H_\omega} = \|\nabla u(t)\|^2_{L^2} + \omega^2 \|u(t)\|^2_{L^2} < \|\nabla Q_1\|^2_{L^2} + \omega^2 \|u_0\|^2_{L^2} =: K^2(\omega), 
		\]
		we bound
		\[
		e^{4\pi \|u(t)\|^2_{L^\infty}} \lesssim  \left( 1+ \frac{\|u(t)\|_{\mathcal{C}^{1/2}}}{K(\omega)} \right)^{4\pi \lambda K^2(\omega)}.
		\]
		Here we have used the fact that the function 
		\[
		s \mapsto \left(1+\frac{1}{s}\right)^{s^2}
		\]
		is an increasing function on $(0,\infty)$. Since $K^2(\omega) \rightarrow \|\nabla Q_1\|^2_{L^2} <1$ as $\omega \rightarrow 0$, we can choose $\omega>0$ small enough depending on $u_0$ and $Q_1$ such that $K^2(\omega) <\frac{1}{2} \left(\|\nabla Q_1\|^2_{L^2}+1\right)$. We next choose $\lambda>\frac{1}{\pi}$ depending on $\omega$ so that $4\pi \lambda K^2(\omega) = 2 \left(\|\nabla Q_1\|^2_{L^2}+1\right)$. We thus get from \eqref{sobo-holder} that
		\[
		e^{4\pi \|u(t)\|^2_{L^\infty}} \lesssim_{u_0, Q_1} \left(1+ \|u(t)\|_{\mathcal{C}^{1/2}}\right)^m \lesssim_{u_0,Q_1} \|u(t)\|^m_{W^{1,4}},
		\]
		where $m:= 2 \left(\|\nabla Q_1\|^2_{L^2}+1\right) \in (2,4)$. This shows that
		\[
		\int_{\R^2} e^{8\pi |u(t)|^2} |u(t)|^8 dx \lesssim_{u_0,Q_1} \|u(t)\|^m_{W^{1,4}} \|u(t)\|^8_{L^8} \lesssim_{u_0,Q_1} \|u(t)\|^m_{W^{1,4}} \|u(t)\|^n_{L^8},
		\]
		where $n:= 2(4-m) \in (0,4)$. Here we have use the fact $\|u(t)\|_{L^8} \lesssim \|u(t)\|_{H^1} \lesssim C(u_0,Q_1)$. It follows that 
		\[
		\iint_{I \times \R^2} e^{8\pi |u|^2} |u|^8 dx dt \lesssim_{u_0,Q_1} \| \|u(t)\|^m_{W^{1,4}} \|_{L^{4/m}} \| \|u(t)\|^n_{L^8} \|_{L^{4/(4-m)}} = \|u\|_{L^4(I, W^{1,4})}^m \|u\|^n_{L^8(I\times \R^2)}.
		\]
		We thus get
		\begin{align*}
		\|\scal{\nabla} f(u)\|_{L^{4/3}(I \times \R^2)} \lesssim_{u_0,Q_1} \|u\|^{1+m/2}_{L^4(I, W^{1,4})} \|u\|^{n/2}_{L^8(I \times \R^2)}.
		\end{align*}
		
		In the case $\|u(t)\|_{L^\infty} \leq 1$, we simply bound
		\[
		\int_{\R^2} e^{8\pi |u(t)|^2} |u(t)|^8 dx \lesssim_{Q_1} \|u(t)\|^8_{L^8} \lesssim_{u_0,Q_1} \|u(t)\|^m_{W^{1,4}} \|u(t)\|^n_{L^8},
		\]
		where we have use $\|u(t)\|_{L^8}\lesssim \|u(t)\|_{W^{1,4}}$, $\|u(t)\|_{L^8} \lesssim \|u(t)\|_{H^1} \lesssim C(u_0,Q_1)$ and the fact $m+n = 8 -m <8$. By H\"older's inequality, we get
		\begin{align*} 
		\|\scal{\nabla} f(u)\|_{L^{4/3}(I\times \R^2)} \lesssim_{u_0,Q_1} \|u\|^{1+m/2}_{L^4(I,W^{1,4})} \|u\|^{n/2}_{L^8(I\times \R^2)}.
		\end{align*}
		Thus in both cases, we have proved that 
		\begin{align*} 
		\|\scal{\nabla} f(u)\|_{L^{4/3}(I\times \R^2)} \lesssim_{u_0,Q_1} \|u\|^{1+m/2}_{L^4(I,W^{1,4})} \|u\|^{n/2}_{L^8(I\times \R^2)}.
		\end{align*}
		Thus
		\[
		\|u(t)- e^{it\Delta} u_0\|_{{\ST}(I)} \lesssim_{u_0,Q_1} \|u\|^{1+m/2}_{L^4(I, W^{1,4})} \|u\|^{n/2}_{L^8(I\times \R^2)}
		\]
		which implies that
		\begin{align} \label{inter-I}
		\|u\|_{{\ST}(I)} \leq C(u_0,Q_1) + C(u_0,Q_1)  \|u\|^{1+m/2}_{L^4(I, W^{1,4})} \|u\|^{n/2}_{L^8(I\times \R^2)}.
		\end{align}
		Let $\vareps>0$ to be chosen shortly. By the assumption $\|u\|_{L^8(\R \times \R^2))} <\infty$, we split $\R$ into $J=J(\vareps)$ intervals $I_j$ such that
		\[
		\|u\|_{L^8(I_j \times \R^2)} <\vareps, \quad j=1, \cdots, J.
		\]
		Applying \eqref{inter-I} to $I_j$ with $j=1,\cdots, J$, we have that
		\[
		\|u\|_{{\ST}(I_j)} \leq C(u_0,Q_1) + C(u_0,Q_1) \vareps^{n/2} \|u\|^{1+m/2}_{{\ST}(I_j)}.
		\]
		By choosing $\vareps>0$, the continuity argument shows that
		\[
		\|u\|_{{\ST}(I_j)} \leq C(u_0,Q_1), \quad j=1, \cdots, J.
		\]
		Summing over all $j=1, \cdots, J$, we obtain $\|u\|_{{\ST}(\R)} \leq C(u_0,Q_1)<\infty$. This global bound implies the scattering.
	\end{proof}
	
		By the same argument as above with $m=2(\|\nabla Q_1\|^2_{L^2}+1)$ and $n=4-m$, we have the following scattering criteria in the case $\mu=0$.
	\begin{corollary}
		Let $\mu=0$. Let $u_0 \in \mathcal{A}^+_0$ and $u$ be the corresponding global solution to \eqref{NLS}. Assume that 
		\[
		\|u\|_{L^4(\R\times \R^2)} <\infty.
		\]
		Then the solution scatters in $H^1$.
	\end{corollary}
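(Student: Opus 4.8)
The plan is to run the proof of Proposition~\ref{prop-crite} almost verbatim, the single structural change being that the leading part of $f_0$ is the mass-critical cubic term $4\pi|u|^2u$ rather than the quintic-type term governing $f_1$; this is precisely what replaces the scattering norm $L^8$ by $L^4$. First I would fix a time interval $I$ and record that, since $\Ac^+_0$ is invariant under the flow, $u(t)\in\Ac^+_0$ for all $t$, whence $\|\nabla u(t)\|_{L^2}<\|\nabla Q_0\|_{L^2}<1$ (the first strict inequality from the action comparison as in the proof of Theorem~\ref{theo-gwp}, the second from Theorem~\ref{theo-GS}). This uniform gradient bound is exactly what licenses the refined Moser--Trudinger inequality \eqref{MT-beta} throughout. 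Combining the Duhamel formula with the Strichartz estimates of Proposition~\ref{prop-strichartz} reduces matters to a single nonlinear estimate,
\[
\|u - e^{it\Delta}u_0\|_{\ST(I)} \lesssim \|\scal{\nabla}f_0(u)\|_{L^{4/3}(I\times\R^2)}.
\]

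For the nonlinear estimate I would use $|f_0'(u)|\lesssim e^{4\pi|u|^2}|u|^2$ (the cubic leading behaviour), so that H\"older's inequality (splitting $\tfrac34=\tfrac14+\tfrac12$) gives
\[
\|\scal{\nabla}f_0(u)\|_{L^{4/3}(I\times\R^2)} \lesssim \|\scal{\nabla}u\|_{L^4(I\times\R^2)}\left(\iint_{I\times\R^2} e^{8\pi|u|^2}|u|^4\,dx\,dt\right)^{1/2}.
\]
Writing $e^{8\pi|u|^2}=e^{4\pi|u|^2}e^{4\pi|u|^2}$ and applying \eqref{MT-beta} with $p=4$ yields $\int e^{4\pi|u|^2}|u|^4\,dx\lesssim_{Q_0}\|u\|_{L^4}^4$, hence $\int e^{8\pi|u|^2}|u|^4\,dx\lesssim e^{4\pi\|u\|_{L^\infty}^2}\|u\|_{L^4}^4$. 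Exactly as in Proposition~\ref{prop-crite}, the logarithmic inequality \eqref{log} (with $\beta=\tfrac12$, and $\omega,\lambda$ chosen so that $4\pi\lambda K^2(\omega)=m$) together with \eqref{sobo-holder} gives $e^{4\pi\|u(t)\|_{L^\infty}^2}\lesssim_{u_0,Q_0}\|u(t)\|_{W^{1,4}}^m$ when $\|u(t)\|_{L^\infty}\ge1$, and the same bound holds trivially when $\|u(t)\|_{L^\infty}\le1$; here $m:=2(\|\nabla Q_0\|_{L^2}^2+1)\in(2,4)$. Reducing the leftover power of $\|u\|_{L^4}$ by means of $\|u\|_{L^4}\lesssim\|u\|_{H^1}\le C(u_0,Q_0)$ and then integrating in time via H\"older with the conjugate exponents $\tfrac4m$ and $\tfrac{4}{4-m}$, I arrive at
\[
\|\scal{\nabla}f_0(u)\|_{L^{4/3}(I\times\R^2)} \lesssim_{u_0,Q_0} \|u\|_{L^4(I,W^{1,4})}^{1+m/2}\,\|u\|_{L^4(I\times\R^2)}^{n/2}, \qquad n:=4-m\in(0,2).
\]

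Finally I would close the argument by the continuity (bootstrap) scheme of Proposition~\ref{prop-crite}. Using the hypothesis $\|u\|_{L^4(\R\times\R^2)}<\infty$, partition $\R$ into finitely many intervals $I_1,\dots,I_J$ on which $\|u\|_{L^4(I_j\times\R^2)}<\vareps$; on each such interval the two displays above give
\[
\|u\|_{\ST(I_j)} \le C(u_0,Q_0) + C(u_0,Q_0)\,\vareps^{n/2}\,\|u\|_{\ST(I_j)}^{1+m/2}.
\]
Choosing $\vareps$ small and invoking Lemma~\ref{lem-boots} yields $\|u\|_{\ST(I_j)}\le C(u_0,Q_0)$ for each $j$, and summing over $j=1,\dots,J$ produces the global bound $\|u\|_{\ST(\R)}<\infty$, from which scattering in $H^1$ follows by the standard argument.

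The step demanding the most care is the exponent bookkeeping rather than any new estimate: the scheme closes only because the scattering norm enters with the \emph{strictly positive} power $n/2$, and $n=4-m>0$ is equivalent to $\|\nabla Q_0\|_{L^2}<1$, i.e. to the subcriticality guaranteed by Theorem~\ref{theo-GS}. It is worth stressing that this is only a \emph{conditional} criterion; the genuinely hard point, namely establishing $\|u\|_{L^4(\R\times\R^2)}<\infty$ unconditionally, is obstructed by the mass-critical cubic term hidden in $f_0$ (the localized virial bound of Lemma~\ref{lem-L6} does not suffice to upgrade to a global $L^4$ bound, in contrast with the $L^8$ bound available when $\mu=1$), and is left open here.
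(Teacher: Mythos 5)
Your proposal is correct and is essentially identical to the paper's own proof: the paper disposes of this corollary in one line, saying it follows ``by the same argument as above with $m=2(\|\nabla Q\|^2_{L^2}+1)$ and $n=4-m$,'' which is exactly the exponent bookkeeping you carried out (using the cubic bound $|f_0'(u)|\lesssim e^{4\pi|u|^2}|u|^2$, the refined Moser--Trudinger inequality, the logarithmic inequality, and the bootstrap on the $\ST$-norm). Your observation that the scheme closes precisely because $n=4-m>0$, i.e. $\|\nabla Q_0\|_{L^2}<1$, and your closing caveat that the criterion is only conditional both match the paper's remarks.
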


	\begin{proposition} \label{prop-scat}
		Let $\mu=1$. Let $u_0 \in \mathcal{A}^+_1$ and $u_0$ be radially symmetric. Let $u$ be the corresponding global solution to \eqref{NLS}. Then \eqref{glo-bound} holds.
	\end{proposition}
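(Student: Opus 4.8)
The plan is to reduce Proposition~\ref{prop-scat} to the scattering criterion of Proposition~\ref{prop-crite} by establishing the a priori space-time bound $\|u\|_{L^8(\R\times\R^2)}<\infty$, and to extract this bound from the sublinear Morawetz estimate of Lemma~\ref{lem-L6} through the Arora--Dodson--Murphy strategy. By the time-reversal symmetry of \eqref{NLS} it suffices to prove $\|u\|_{L^8([0,\infty)\times\R^2)}<\infty$. Throughout I would use that $u(t)\in\mathcal{A}^+_1$ for all $t$, so that $\|\nabla u(t)\|_{L^2}<\|\nabla Q_1\|_{L^2}<1$ uniformly in $t$; this keeps the refined Moser--Trudinger inequality (Corollary~\ref{coro-refi-MT}) and the logarithmic estimate (Lemma~\ref{lem-log}) available at every time, which is exactly what allows $f_1(u)$ to be treated near $u=0$ as the $\dot{H}^{1/2}$-critical quintic nonlinearity while its exponential tail is absorbed as in the proof of Proposition~\ref{prop-crite}.

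First I would record a perturbative scattering criterion on a half-line: there is $\varepsilon_0=\varepsilon_0(u_0,Q_1)>0$ such that if, for some $T\ge0$,
\[
\|e^{i(t-T)\Delta}u(T)\|_{L^8([T,\infty)\times\R^2)}\le\varepsilon_0,
\]
then $\|u\|_{L^8([T,\infty)\times\R^2)}<\infty$. This is proved by the same Duhamel-plus-Strichartz continuity argument as Proposition~\ref{prop-crite}: writing $u(t)=e^{i(t-T)\Delta}u(T)+i\int_T^t e^{i(t-s)\Delta}f_1(u(s))\,ds$ and bounding the nonlinear term by $\|u\|_{\ST([T,\infty))}^{1+m/2}\|u\|_{L^8([T,\infty))}^{n/2}$ exactly as there, the bootstrap closes once the free evolution is small. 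Since $(8,8)$ is an $\dot{H}^{1/2}$-admissible pair, the homogeneous Strichartz estimate gives $\|e^{it\Delta}u_0\|_{L^8(\R\times\R^2)}\lesssim\|u_0\|_{\dot{H}^{1/2}}\lesssim\|u_0\|_{H^1}<\infty$, and hence $\|e^{it\Delta}u_0\|_{L^8([T,\infty)\times\R^2)}\to0$ as $T\to\infty$.

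The heart of the argument is to produce a large time $T_*$ at which the hypothesis of the criterion holds. Using the Duhamel formula I would write, for $t\ge T$,
\[
e^{i(t-T)\Delta}u(T)=e^{it\Delta}u_0+i\int_0^T e^{i(t-s)\Delta}f_1(u(s))\,ds,
\]
and split the history integral at $T-T_0$. On the far part $s\in[0,T-T_0]$ the source and the observation window $t\ge T$ are separated by at least $T_0$, so I would rewrite this piece as a single free evolution started at time $T-T_0$ of the function $G_T:=-i\big(u(T-T_0)-e^{i(T-T_0)\Delta}u_0\big)$, whose $\dot{H}^{1/2}$-norm is bounded uniformly in $T$ by $\sup_t\|u(t)\|_{H^1}$, and bound its $L^8([T,\infty))$-norm by the dispersive decay of the free evolution, which tends to $0$ as $T_0\to\infty$. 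On the near part $s\in[T-T_0,T]$ the key is Lemma~\ref{lem-L6}: since $\int_0^{MT_0}\|u(s)\|^6_{L^6}\,ds\lesssim(MT_0)^{1/3}$, averaging over the windows $[(n-1)T_0,nT_0]$ with $n\in[M/2,M]$ produces, for each fixed $T_0$, a window $[T_*-T_0,T_*]$ with $T_*\ge MT_0/2$ on which $\int_{T_*-T_0}^{T_*}\|u(s)\|^6_{L^6}\,ds\lesssim T_0^{1/3}M^{-2/3}$; estimating the near-history Duhamel integral through this small quantity makes its $L^8([T_*,\infty))$ contribution small as $M\to\infty$. Choosing first $T_0$ large (to control the far part) and then $M$ large (to locate a good, and necessarily large, window $T_*$) makes all three contributions below $\varepsilon_0$, the free-data term being small precisely because $T_*$ is large.

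Finally, the criterion yields $\|u\|_{L^8([T_*,\infty)\times\R^2)}<\infty$, while on the bounded interval $[0,T_*]$ one has $\|u\|_{L^8([0,T_*]\times\R^2)}\le T_*^{1/8}\sup_t\|u(t)\|_{L^8_x}\lesssim T_*^{1/8}\sup_t\|u(t)\|_{H^1}<\infty$ by Sobolev embedding and the conservation laws; adding the two halves gives \eqref{glo-bound}. The main obstacle is the third step: making the far-history dispersive bound and the near-history Morawetz bound compatible, i.e. gaining genuine smallness rather than the sublinear growth $|I|^{1/3}$ that any direct global estimate produces. This is exactly where the sublinearity of the exponent $1/3<1$ is decisive — it allows one, by pigeonholing over late windows, to extract a \emph{single} good time $T_*$ instead of summing bounds over infinitely many intervals (which would only reproduce the $T^{1/3}$ growth). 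A secondary technical point is threading the exponential nonlinearity through all of these estimates, which is handled uniformly by the fixed sub-threshold bound $\|\nabla u(t)\|_{L^2}<\|\nabla Q_1\|_{L^2}<1$ together with Corollary~\ref{coro-refi-MT} and Lemma~\ref{lem-log}, precisely as in the proof of Proposition~\ref{prop-crite}.
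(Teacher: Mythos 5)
Your overall architecture (pigeonhole a good late window out of the $|I|^{1/3}$ Morawetz bound of Lemma \ref{lem-L6}, split the Duhamel history at that window, then run a perturbative criterion) is the right ADM-style strategy and is close in spirit to the paper's proof, but the step that carries the entire argument --- the far-history estimate --- is not justified and, as formulated, is false. You rewrite $\int_0^{T-T_0} e^{i(t-s)\Delta} f_1(u(s))\,ds = e^{i(t-(T-T_0))\Delta} G_T$ with $G_T = -i\bigl(u(T-T_0) - e^{i(T-T_0)\Delta}u_0\bigr)$ and claim its $L^8([T,\infty)\times\R^2)$ norm tends to $0$ as $T_0\to\infty$ because $G_T$ is bounded in $H^1$ (hence in $\dot H^{1/2}$). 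For a \emph{fixed} function this tail smallness is true (tail of a finite Strichartz norm), but $G_T$ depends on $T$ and $T_0$, and there is no uniform rate over bounded sets of $H^1$: taking $G = e^{-2iT_0\Delta}\psi$ gives $\|e^{i\tau\Delta}G\|_{L^8([T_0,\infty)_\tau\times\R^2)} = \|e^{is\Delta}\psi\|_{L^8([-T_0,\infty)_s\times\R^2)} \geq \|e^{is\Delta}\psi\|_{L^8([0,\infty)\times\R^2)}$, which does not decay as $T_0\to\infty$. Since your argument uses nothing about $G_T$ beyond its uniform $H^1$ bound, the step fails. The paper uses the very same Duhamel rewriting, but only to obtain a \emph{uniform} $L^4_{t,x}$ bound on the far piece; the actual smallness comes from applying the $L^1\to L^\infty$ dispersive estimate \eqref{disper-est} directly to the integral,
\[
\Bigl\| \int_0^{t_0} e^{i(t-s)\Delta} f_1(u(s))\,ds \Bigr\|_{L^\infty} \lesssim \int_0^{t_0}|t-s|^{-1}\|u(s)\|^5_{L^5}\,ds \lesssim \Bigl(\int_0^{t_0}\|u(s)\|_{L^6}^6\,ds\Bigr)^{3/4}|t-t_0|^{-3/4},
\]
i.e.\ from the Morawetz bound once more, and then interpolating between this quantitative $L^\infty$ decay and the uniform $L^4_{t,x}$ bound. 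This dispersive mechanism is precisely the structure your reformulation discards.

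Two further problems would surface even after repairing that step. First, the correct far-history bound carries the Morawetz mass of the \emph{whole} history, so it is of size $t_0^{1/4}(\text{separation})^{-3/4} \sim (MT_0)^{1/4}T_0^{-3/4} = M^{1/4}T_0^{-1/2}$, which grows with $M$ at fixed $T_0$; hence your ordering ``first $T_0$ large, then $M$ large'' cannot work, and the two parameters must be coupled (the paper ties them together by taking windows of length $\vareps T^{2/3}$ located at times $\sim T$ with $T$ a fixed negative power of $\vareps$). Second, your half-line criterion --- if $\|e^{i(t-T)\Delta}u(T)\|_{L^8([T,\infty)\times\R^2)}\le\vareps_0$ then $u\in L^8([T,\infty)\times\R^2)$, ``proved exactly as Proposition \ref{prop-crite}'' --- does not close as stated: with $X=\|u\|_{L^8}$, $Y=\|u\|_{\ST}$ the bootstrap reads $X \le \vareps_0 + CY^{1+m/2}X^{n/2}$, $Y \le C\|u(T)\|_{H^1} + CY^{1+m/2}X^{n/2}$, and the exponent $n/2 = 2\bigl(1-\|\nabla Q_1\|^2_{L^2}\bigr)$ is $\le 1$ whenever $\|\nabla Q_1\|^2_{L^2}\ge 1/2$ (only $0<\|\nabla Q_1\|_{L^2}<1$ is known); with exponent at most $1$ on the small quantity $X$ and an order-one coefficient, the continuity argument gives nothing. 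The paper sidesteps this by never bootstrapping $L^8$: it bootstraps the $L^6_{t,x}$ norm through the acceptable-pair inhomogeneous Strichartz estimate \eqref{inho-est-app-1}, whose exponent $4\theta = \frac{4(1+2\nu)}{2-\nu}>1$ is independent of $Q_1$, proves a global (large but finite) $L^6_{t,x}$ bound interval by interval, and only at the end converts it into \eqref{glo-bound} via \eqref{inho-est-app}. Repairing your proposal essentially amounts to reinstating these devices, i.e.\ reproducing the paper's proof.
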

	
	\begin{proof}
		Let $\vareps>0$ be a small parameter to be chosen sufficiently small depending on $u_0$ and $Q_1$ below. By Sobolev embedding and Strichartz estimates,
		\[
		\|e^{it\Delta} u_0\|_{L^6(\R \times \R^2)} \lesssim \||\nabla|^{1/3}e^{it\Delta} \scal{\nabla} u_0\|_{L^6(\R, L^3)} \lesssim \|u_0\|_{H^1}.
		\]
		We may split $\R$ into $K=K(\vareps, u_0)$ intervals $I_k$ such that
		\begin{align} \label{bound-u0}
		\|e^{it\Delta} u_0\|_{L^6(I_k\times \R^2)} <\vareps
		\end{align}
		for all $k=1,\cdots,K$. Let $T=T(\vareps,u_0,Q_1)$ be a large parameter to be chosen later. We will prove that
		\begin{align} \label{est-l6}
		\|u\|_{L^6(I_k\times \R^2)} \lesssim T
		\end{align}
		for all $k=1,\cdots,K$. Summing over all intervals $I_k, k=1,\cdots,K$, we get
		\[
		\|u\|_{L^6(\R \times \R^2)} \lesssim T
		\]
		which implies the scattering. In fact, by the scattering criteria given in Proposition $\ref{prop-crite}$, it suffices to show
		\begin{align}\label{est-l8}
		\|u\|_{L^8(\R \times \R^2)} \leq C(\vareps,u_0,Q_1).
		\end{align}
		To see \eqref{est-l8}, we use \eqref{inho-stri-est} to have
		\begin{align*}
		\left\| \int_0^t e^{i(t-s)\Delta} f_1(u(s)) ds \right\|_{L^8(\R \times \R^2)} \lesssim \|f_1(u)\|_{L^{m'}(\R,L^{n'})},
		\end{align*}
		where $(m',n')$ is the dual pair of a Schr\"odinger acceptable pair $(m,n)$ satisfying 
		\[
		\frac{2}{m}+\frac{2}{n}=\frac{3}{2} \quad \text{or} \quad \frac{2}{m'}+\frac{2}{n'}=\frac{5}{2}.
		\]
		Using the fact $|f_1(u)| \lesssim e^{4\pi|u|^2} |u|^5$, we see that
		\begin{align*}
		\|f_1(u)\|_{L^{m'}(\R,L^{n'})} &\lesssim \|u\|_{L^\infty(\R,L^a)} \|e^{4\pi|u|^2}|u|^4\|_{L^b(\R,L^c)}
		\end{align*}
		for some $(a,b,c) \in [1,\infty]^3$ provided that 
		\[
		\frac{1}{m'}= \frac{1}{b}, \quad \frac{1}{n'} = \frac{1}{a}+\frac{1}{c}.
		\]
		Taking $c:=1+\nu$ for some $\nu>0$ small to be chosen shortly, we use \eqref{MT-nu} with $\|\nabla u(t)\|_{L^2} <\|\nabla Q_1\|_2<1$ for all $t\in \R$ to get
		\[
		\|e^{4\pi |u(t)|^2} |u(t)|^4\|^c_{L^c} = \int_{\R^2} e^{4\pi(1+\nu)|u(t)|^2} |u(t)|^{4(1+\nu)} dx \lesssim_{Q_1} \|u(t)\|^{4(1+\nu)}_{L^{4(1+\nu)}}
		\]
		provided that $0<\nu<\frac{1}{\|\nabla Q_1\|_{L^2}^2} -1$. This implies that
		\[
		\|e^{4\pi |u(t)|^2} |u(t)|^4\|_{L^c} \lesssim_{Q_1} \|u(t)\|^4_{L^{4c}}.
		\]
		Choosing $a=\frac{(1+\nu)(2+\nu)}{\nu}$, the Sobolev embedding implies
		\[
		\|u\|_{L^\infty(\R,L^a)} \lesssim \|u\|_{L^\infty(\R, H^1)}.
		\]
		It follows that
		\begin{align*}
		\|f_1(u)\|_{L^{m'}(\R,L^{n'})} &\lesssim_{Q_1} \|u\|_{L^\infty(\R,H^1)} \|u\|^4_{L^{4b}(\R,L^{4c})} \\
		&\lesssim_{Q_1} \|u\|_{L^\infty(\R,H^1)} \|u\|^{4\theta}_{L^6(\R\times \R^2)} \|u\|^{4(1-\theta)}_{L^\infty(\R, L^q)}
		\end{align*}
		provided that $\theta \in (0,1)$, $q\in(2,\infty)$ and
		\[
		\frac{1}{4b} =\frac{\theta}{6},\quad \frac{1}{4c} = \frac{\theta}{6} + \frac{1-\theta}{q}.
		\]
		We see that
		\[
		\frac{2}{a} + \frac{4\theta}{3} = \frac{5}{2}-\frac{2}{c},
		\]
		hence
		\[
		\theta = \frac{3}{4} \left(\frac{5}{2}-\frac{2}{c}-\frac{2}{a}\right) = \frac{3}{4} \left( \frac{5}{2}-\frac{2}{1+\nu}-\frac{2\nu}{(1+\nu)(2+\nu)}\right)=\frac{3(2+5\nu)}{8(2+\nu)}. 
		\]
		A direct computation shows
		\[
		m=\frac{4(2+\nu)}{6-\nu}, \quad n=\frac{2+\nu}{\nu}, \quad b=\frac{4(2+\nu)}{2+5\nu}, \quad q=\frac{2(10-7\nu)(1+\nu)}{6-3\nu-5\nu^2}.
		\]
		By taking $\nu>0$ sufficiently small, it is easy to check that $(m,n)$ is a Schr\"odinger acceptable pair and
		\begin{align*} %\label{choi-theta}
		q\in (2,\infty), \quad \theta \in (0,1), \quad 4\theta >1.
		\end{align*}
		The Sobolev embedding then implies that
		\begin{align}
		\left\| \int_0^t e^{i(t-s)\Delta} f_1(u(s)) ds \right\|_{L^8(\R \times \R^2)} &\lesssim_{Q_1} \| u\|^{1+4(1-\theta)}_{L^\infty(\R,H^1)} \|u\|^{4\theta}_{L^6(\R\times \R^2)}  \label{inho-est-app} 	\\
		&\lesssim C(\vareps,u_0,Q_1). \nonumber
		\end{align}
		We thus get
		\[
		\|u\|_{L^8(\R\times \R^2)} \leq \|e^{it\Delta} u_0\|_{L^8(\R\times \R^2)} + C(\vareps,u_0,Q_1) \leq C(\vareps,u_0,Q_1)
		\]
		which proves \eqref{est-l8}. 
		
		It remains to show \eqref{est-l6}. By Sobolev embedding, we observe that
		\[
		\|u\|^6_{L^6(I\times \R^2)} \leq |I| \|u\|^6_{L^\infty(I, L^6)}  \lesssim |I| \|u\|^6_{L^\infty(I,H^1)} \lesssim_{u_0,Q_1} |I|
		\]
		for any interval $I \subset\R$. 
		It suffices to show \eqref{est-l6} with $|I_k| >2T$. Let us fix one such interval, say $I=(c,d)$ with $|I|>2T$. We will show that there exists $t_1 \in (c,c+T)$ such that
		\begin{align} \label{bound-t1}
		\left\| \int_0^{t_1} e^{i(t-s)\Delta} f_1(u(s)) ds \right\|_{L^6([t_1,+\infty) \times \R^2)} \leq C(u_0, Q_1) \vareps^{1/4}.
		\end{align}
		Assume \eqref{bound-t1} for the moment, let us prove \eqref{est-l6}. By the Duhamel formula
		\[
		e^{i(t-t_1)\Delta} u(t_1) = e^{it\Delta} u_0 + i \int_0^{t_1} e^{i(t-s)\Delta} f_1(u(s)) ds
		\]
		and \eqref{bound-u0}, we see that
		\[
		\|e^{i(t-s)\Delta} u(t_1)\|_{L^6([t_1,d]\times \R^2)} \leq C(u_0,Q_1) \vareps^{1/4}. 
		\]
		By the same argument as in the proof of \eqref{inho-est-app} with $c=1+\nu$, $a=\frac{(1+\nu)(2+\nu)}{\nu}$ and
		\begin{align} \label{defi-theta-mn}
		\theta=\frac{1+2\nu}{2-\nu}, \quad m=\frac{3(2+\nu)}{4-\nu}, \quad n=\frac{2+\nu}{\nu}, \quad b=\frac{3(2+\nu)}{2(1+2\nu)}, \quad q=\frac{12(1-\nu)(1+\nu)}{4-3\nu-4\nu^2},
		\end{align}
		we have for $\nu>0$ sufficiently small that $4\theta>1$ and
		\begin{align}
		\left\| \int_{t_1}^t e^{i(t-s)\Delta} f_1(u(s)) ds \right\|_{L^6([t_1,d] \times \R^2)} &\lesssim_{Q_1} \|u\|^{1+4(1-\theta)}_{L^\infty([t_1,d], H^1)} \|u\|^{4\theta}_{L^6([t_1,d]\times \R^2)} \label{inho-est-app-1}\\
		&\leq C(u_0,Q_1) \|u\|^{4\theta}_{L^6([t_1,d]\times \R^2)}. \nonumber
		\end{align}
		This together with
		\[
		u(t) = e^{i(t-t_1)\Delta} u(t_1) + i \int_{t_1}^t e^{i(t-s)\Delta} f_1(u(s)) ds
		\]
		yield
		\begin{align*}
		\|u\|_{L^6([t_1,d]\times \R^2)} &\leq \|e^{i(t-t_1)\Delta} u(t_1)\|_{L^6([t_1,d]\times \R^2)} + C(u_0,Q_1)\|u\|^{4\theta}_{L^6([t_1,d] \times \R^2)}  \\
		&\leq C(u_0,Q_1) \vareps^{1/4} + C(u_0,Q_1) \|u\|^{4\theta}_{L^6([t_1,d]\times \R^2)}.
		\end{align*}
		By the continuity argument and the fact $4\theta>1$, we get
		\[
		\|u\|_{L^6([t_1,d]\times \R^2)} \leq C(\vareps,u_0,Q_1).
		\]
		On the other hand, since $t_1-c <T$,
		\[
		\|u\|_{L^6([c,t_1]\times \R^2)} \lesssim |t_1-c|^{\frac{1}{6}} \lesssim T^{\frac{1}{6}}
		\]
		hence \eqref{est-l6} follows. 
		
		Let us prove \eqref{bound-t1}. By time-translation, we may assume that $c=0$. We first claim that there exists $t_0 \in [T/4,T/2]$ such that 
		\begin{align} \label{bound-vareps}
		\int_{t_0}^{t_0+\vareps T^{2/3}} \|u(s)\|^6_{L^6} ds \leq C(u_0,Q_1) \vareps.
		\end{align}
		Indeed, we cover the interval $J:=[T/4,T/2]$ by $N \sim \vareps^{-1} T^{1/3}$ intervals $J_k$ of length $\vareps T^{2/3}$ to have that
		\[
		N \min_{1\leq k\leq N} \int_{J_k} \|u(s)\|^6_{L^6} ds \leq \sum_{k=1}^N \int_{J_k} \|u(s)\|^6_{L^6} ds = \int_J \|u(s)\|^6_{L^6} ds \leq C(u_0,Q_1) T^{1/3}.
		\]
		This implies that there exists $k_0 \in \{1,\cdots,N\}$ such that
		\[
		\int_{J_{k_0}} \|u(s)\|^6_{L^6} ds \leq C(u_0,Q_1) \vareps
		\]
		which proves the claim. Set
		\[
		t_1:= t_0 + \vareps T^{2/3}.
		\]
		Since $t_0 <T/2$, by enlarging $T$ if necessary, we may assume that $t_1<T$. We will estimate the left hand side of \eqref{bound-t1} by considering separately $[0,t_0]$ and $[t_0,t_1]$. We first treat $[0,t_0]$. For $t>t_1$, we use the dispersive estimate and H\"older's inequality to get
		\begin{align*}
		\left\|\int_0^{t_0} e^{i(t-s)\Delta} f_1(u(s)) ds \right\|_{L^\infty} &\lesssim \int_0^{t_0} |t-s|^{-1} \|f_1(u(s))\|_{L^1} ds \\
		&\lesssim_{Q_1} \int_0^{t_0} |t-s|^{-1} \|u(s)\|^5_{L^5} ds \\
		&\lesssim_{Q_1} \int_0^{t_0} |t-s|^{-1} \|u(s)\|^{9/2}_{L^6} \|u(s)\|^{1/2}_{L^2} ds \\
		&\lesssim_{u_0,Q_1} \left(\int_0^{t_0} \|u(s)\|^6_{L^6} ds \right)^{3/4} \||t-s|^{-1}\|_{L^4_s([0,t_0])} \\
		&\lesssim_{u_0,Q_1} T^{1/4} |t-t_0|^{-3/4} \\
		&\lesssim_{u_0,Q_1} T^{1/4} |t_1-t_0|^{-3/4} \\
		&\lesssim_{u_0,Q_1} \left( \vareps T^{2/3} \right)^{-3/4}.
		\end{align*}
		This implies that
		\[
		\left\| \int_0^{t_0} e^{i(t-s)\Delta} f_1(u(s)) ds \right\|_{L^\infty([t_1,+\infty)\times \R^2)} \leq C(u_0,Q_1) \left( \vareps T^{2/3} \right)^{-3/4}.
		\]
		On the other hand, since
		\[
		i \int_0^{t_0} e^{i(t-s)\Delta} f_1(u(s)) ds  = e^{i(t-t_0)\Delta} u(t_0) - e^{it\Delta} u_0,
		\]
		Strichartz estimates imply that
		\[
		\left\| \int_0^{t_0} e^{i(t-s)\Delta} f_1(u(s)) ds \right\|_{L^4([t_1,+\infty) \times \R^2)} \leq C(u_0,Q_1).
		\]
		Interpolating between $L^\infty$ and $L^4$, we get
		\begin{align*}
		\Big\| \int_0^{t_0} &e^{i(t-s)\Delta} f_1(u(s)) ds \Big\|_{L^6([t_1,+\infty) \times \R^2)} \\
		&\leq \left\| \int_0^{t_0} e^{i(t-s)\Delta} f_1(u(s)) ds \right\|^{\frac{1}{3}}_{L^\infty([t_1,+\infty)\times \R^2)}     \left\| \int_0^{t_0} e^{i(t-s)\Delta} f_1(u(s)) ds \right\|^{\frac{2}{3}}_{L^4([t_1,+\infty) \times \R^2)} \\
		&\leq C(u_0,Q_1) \left(\vareps T^{2/3} \right)^{-\frac{1}{3}}.
		\end{align*}
		On $[t_0,t_1]$, we use \eqref{inho-est-app-1} and \eqref{bound-vareps} to have that
		\begin{align*}
		\left\| \int_{t_0}^{t_1} e^{i(t-s)\Delta} f_1(u(s)) ds \right\|_{L^6([t_1,\infty) \times \R^2)} 	&\lesssim_{Q_1} \|u\|^{1+4(1-\theta)}_{L^\infty([t_0,t_1],H^1)} \|u\|^{4\theta}_{L^6([t_0,t_1]\times \R^2)} \\
		&\leq C(u_0,Q_1) \vareps^{2\theta/3} \\
		&\leq C(u_0,Q_1) \vareps^{1/4},
		\end{align*}
		where $2\theta/3 >1/4$ with $\theta$ as in \eqref{defi-theta-mn}. Collecting the contributions of the above two intervals, we get
		\[
		\left\| \int_0^{t_1} e^{i(t-s)\Delta} f_1(u(s)) ds \right\|_{L^6([t_1,+\infty)\times \R^2)} \leq C(u_0,Q_1) \left[ \left(\vareps T^{2/3}\right)^{-1/3} + \vareps^{1/4}\right].
		\]
		By taking $T=\vareps^{-21/8}$, we prove \eqref{bound-t1}. The proof is complete.
	\end{proof}
	
		\begin{remark}
		The above argument does not work for $\mu=0$. The first difficulty is that an estimate similar to \eqref{inho-est-app-1}, namely
		\[
		\left\| \int_0^t e^{i(t-s)\Delta} f_0(u(s)) ds \right\|_{L^4(\R \times \R^2)} \lesssim_{Q_0} \|u\|^{1+4(1-\theta)}_{L^\infty(\R, H^1)} \|u\|^{4\theta}_{L^4(\R \times \R^2)}
		\]
		for some $\theta \in (0,1)$ satisfying $4\theta >1$, is not easy to obtain. More precisely, if we perform the same reasoning as above, we will get $\theta = \frac{2+3\nu}{2+\nu}$ which is strictly greater than 1. The second difficulty comes from the fact $(4,4)$ is a Schr\"odinger admissible pair which prevents the smallness of 
		\[
		\left\| \int_0^{t_0} e^{i(t-s)\Delta} f_0(u(s)) ds \right\|_{L^4([t_1,\infty) \times \R^2)}.
		\]
	\end{remark}

	\noindent {\it Proof of Theorem $\ref{theo-scat}$.}
	Theorem $\ref{theo-scat}$ follows immediately from Lemma $\ref{prop-crite}$ and Proposition $\ref{prop-scat}$.
	\hfill $\Box$

	\section*{Acknowledgement}
	This work was supported in part by the Labex CEMPI (ANR-11-LABX-0007-01). S. K benefited from the support of the project ODA (ANR-18-CE40-0020-02). V. D. D. would like to express his deep gratitude to his wife - Uyen Cong for her encouragement and support. M. M. is extremely thankful to his wife Souad for her support. The authors would like to thank Prof. Zihua Guo for the fruitful discussion which helps improve the manuscript. %The authors would like to thank the reviewer for his/her helpful comments and suggestions. 

\end{document}